\newcommand*{\transp}[2][-3mu]{\ensuremath{\mskip1mu\prescript{\smash{\tau \mkern#1}}{}{\mathstrut#2}}}%
\def\N{\mathbb N}
\def\R{\mathbb R}
\def\C{\mathbb C}
\def\D{\mathcal D}
\def\CC{\mathcal C}\def\O{\mathcal O}
\def\E{\mathcal E}
\def\G{\mathcal G}
\def\U{\mathcal U}
\def\O{\mathcal O}
\def\M{\mathcal M}
\def\W{\mathcal W}
\def\eps{\varepsilon}
\def\NN{\mathcal N}
\newcommand{\crb}{\mathcal{V}}
\DeclareMathOperator{\id}{id}
\DeclareMathOperator{\Id}{Id}
\DeclareMathOperator{\WF}{WF}
\DeclareMathOperator{\real}{Re}
\DeclareMathOperator{\imag}{Im}
\DeclareMathOperator{\supp}{supp}
\DeclareMathOperator{\Char}{Char}
\DeclareMathOperator{\spanc}{span}
\theoremstyle{plain}
\newtheorem{Thm}{Theorem}[section]
\newtheorem{Prop}[Thm]{Proposition}
\newtheorem{Lem}[Thm]{Lemma}
\newtheorem{Cor}[Thm]{Corollary}
\theoremstyle{definition}
\newtheorem{Def}[Thm]{Definition}
\newtheorem{Ex}[Thm]{Example}
\newtheorem{Rem}[Thm]{Remark}
\numberwithin{equation}{section}
\title{Ultradifferentiable CR Manifolds}
\author{Stefan F\"urd\"os}
\address{Faculty of Mathematics, University of Vienna, Oskar-Morgenstern-Platz 1, 1090 Vienna, Austria}
\curraddr{Department of Mathematics and Statistics, Masaryk University,  Kotlarska 2, 611 37 Brno, Czech Republic}
\email{stefan.fuerdoes@univie.ac.at}
\begin{document}
\begin{abstract}
	In this article the notion of ultradifferentiable CR manifold is introduced 
	and an ultradifferentiable regularity result for finitely nondegenerate %%reflection principle
	CR mappings is proven.
	Here ultradifferentiable means with respect to Denjoy-Carleman classes
	defined by weight sequences.
	Furthermore the regularity of infinitesimal CR automorphisms on ultradifferentiable
	abstract CR manifolds is investigated.
\end{abstract}
\keywords{ultradifferentiable CR manifolds, ultradifferentiable regularity, CR mappings,
infinitesimal CR automorphisms}
\subjclass[2010]{32V05;32V99;26E10;35A18}
\maketitle
\section{Introduction}\label{Intro}
The primary focus of this article is the study of %certain results on 
the regularity of CR mappings.
%which have been traditionally referred to as reflection principles.
%The epynom of statements of this kind is the classical Schwarz reflection principle, which in fact may be viewed as a regularity result: 
%Any real valued continuous function on the real line that extends holomorphically to one side is actually real analytic.
%Note that $\R\subseteq\C$ is a totally real submanifold and hence all continuous real valued function can be considered as CR mappings on $\R$.
%
%The Schwarz reflection principle can easily be generalized to mappings between totally real submanifolds of $\C^n$.
%However it was a surprise when  in the second half of the last century an increasing number of reflection principles for CR mappings between more general CR submanifolds were proven,
%beginning with the epochal theorem of Fefferman \cite{MR0350069} on the smooth extension of biholomorphisms of bounded strictly pseudoconvex domains in $\C^n$.
%Among the important results on the boundary regularity that were shown after the theorem of Fefferman we would like to mention the reflection principle of Nirenberg-Webster-Yang \cite{MR562738} 
%and  the reflection principle for CR diffeomorphisms on essential finite real analytic hypersurfaces of Baouendi-Jacobowitz-Treves \cite{MR808223} to name only a few.
%
Looking at the literature concerning this problems, one observes that most theorems about the regularity of CR mappings are of a similar form which can be summarized as follows:
We consider a CR mapping $H$ between two CR submanifolds $M$ and $M^\prime$ 
with some a-priori regularity 
that extends to a holomorphic mapping defined on  a wedge with edge $M$. 
If the mapping and/or the manifolds satisfy certain nondegeneracy conditions at some point then it is proven that $H$ is actually of optimal
regularity near this point, that is smooth if $M$ and $M^\prime$ are smooth, or real-analytic if the manifolds are real-analytic.
We should mention that the nondegeneracy assumptions are heavily tailored towards the methods applied in the various different proofs.
In particular, it is worth noting that in most instances the conditions in the smooth setting differ sharply from
those used in the analytic category.
In the case of smooth CR manifolds the fundamental contributions are the pioneering works of Fefferman \cite{MR0350069} and Nirenberg-Webster-Yang \cite{MR562738}.
We should mention that in the analytic setting surprisingly 
 weak assumptations often suffice, c.f.\ e.g.\ the classical results of Baouendi-Jacobowitz-Treves \cite{MR562738}, 
Huang \cite{MR1421212} and 
Pin{\v c}uk \cite{MR496595}.

One of the rare cases, where under the identical assumptions 
it has been possible to show that $H$ is smooth if the manifolds are smooth 
and analytic if $M$ and $M^\prime$ are both analytic manifolds, 
have been the results of Bernhard Lamel \cite{MR1861300,MR2085046}. 
He proved that every finitely nondegenerate CR mapping between two generic submanifolds that extends holomorphically  is smooth and even analytic if both manifolds are real-analytic.

Recently Berhanu-Xiao \cite{MR3405870} were able to strengthen this result in the smooth case by relaxing
partially its assumptions.
They require only the target manifold to be an embedded CR manifold, the source manifold could be only an abstract CR manifold.
The finitely nondegenerate condition on the mapping remains unchanged but the holomorphic extension obviously makes no sense in this situation. 
It is replaced in the theorem of Berhanu-Xiao with the assumptation
that the fibers of the wavefront set of $H$ do not include opposite directions.

This microlocal assumption is automatically satisfied in the embedded setting if %holomorphic 
extension to a wedge is assumed since Baouendi-Chang-Treves \cite{MR723811} showed that for CR distributions
on CR submanifolds of $\C^N$ the holomorphic extension into wedges is in fact a microlocal condition, which they used to define the hypoanalytic wavefront set of CR distributions. 
It coincides with the analytic wavefront set if the manifold is analytic. If the manifold is only smooth then the hypoanalytic wavefront set includes the smooth wavefront set.

%Moreover, Lamel used in the proof of the smooth version \cite{MR2085046} of his reflection principle basic techniques from microlocal analysis in all but name.

Since the results of Lamel and Berhanu-Xiao suggest that finite nondegeneracy preserves regularity quite well,
the following question arises naturally.
Given a subsheaf $\mathcal{A}$ of the sheaf of smooth functions we may ask that if in the formulation of the theorem of Lamel 
the manifolds are assumed to be of class $\mathcal{A}$, does it follow that the CR mapping has to be of 
class $\mathcal{A}$ as well?

Of course we have to assume that $\mathcal{A}$ satisfies certain properties.
First of all, in order for the conjecture above to make sense, $\mathcal{A}$ must be closed under composition and 
the implicit function theorem must hold in the category of mappings of class $\mathcal{A}$.
Furthermore if we try to modify the existing proofs in the smooth category then we need some version of
$\mathcal{A}$-wavefront set or more precisely a definition of $\mathcal{A}$-microlocal regularity.
We should note at this point that in both Lamel's proof and that of Berhanu-Xiao the characterization of the smooth wavefront set 
by almost-analytic extensions was heavily used as both relied on an almost-analytic version of the implicit function theorem.

We are mainly interested in  subsheafs of smooth functions that contains strictly the sheaf of real-analytic functions. 
We shall call the elements of such sheafs ultradifferentiable functions. 
Generally ultradifferentiable functions are determined either by estimates on its derivatives or its Fourier transform.
The most well-known examples of ultradifferentiable classes are the Gevrey classes, see e.g.\ \cite{MR1249275}.

Here we consider  the category of socalled Denjoy-Carleman classes, which are defined
in the following way.
If $\M=(m_j)_j$ is a sequence of positive real numbers then the Denjoy-Carleman class associated with $\M$ consists of those 
smooth functions that satisfy the following generalized Cauchy estimate
\begin{equation}\label{initialEst}
\bigl\lvert \partial^\alpha f(x)\bigr\rvert\leq C h^{\lvert\alpha\rvert}m_{\lvert\alpha \rvert}\lvert\alpha\rvert !
\end{equation}
on compact sets, where $C$ and $h$ are constants independent of $\alpha$. 
We will also say that a smooth function $f$ obeying \eqref{initialEst}
is of class $\{\M\}$.
In particular, if $\M=(j!^s)_j$ then the associated Denjoy-Carleman class to $\M$
is the Gevrey class of order $s+1$.

Examining the literature concerning the Denjoy-Carleman classes and their properties one can observe that
 stability conditions of the associated class correlate with properties of the weight sequence.
For example, we know that, if $\M$ is a regular weight sequence in the sense of
 \cite{zbMATH03751341}, then  the Denjoy-Carleman class associated to $\M$ 
 is closed under composition, solving ordinary differential equations 
 and the implicit function theorem holds in the class, c.f.\ e.g.\ \cite{MR3462072}. 
Hence for regular sequences $\M$ we can consider manifolds of Denjoy-Carleman type.
We shall say such a manifold is an ultradifferentiable manifold of class $\{\M\}$.

On the other hand, H\"ormander \cite{MR0320486} introduced the ultradifferentiable
wavefront set for distributions defined on open subsets of the euclidean space.
But %his definition is a little bit too general for the purposes of this article.
since he worked under comparatively weak conditions on the weight sequence H\"ormander was only able to define the 
ultradifferentiable wavefront set $\WF_\M u$ of distributions $u$ on real-analytic manifolds but not distributions defined on 
ultradifferentiable manifolds.  

However using Dyn'kins  characterization of ultradifferentiable functions by
almost analytic extensions \cite{zbMATH03751341,MR0587795}
we were able in \cite{Fuerdoes1} to develop a geometric theory for the ultradifferentiable wavefront set.
In particular, if the weight sequence is regular, the ultradifferentiable wavefront 
set of a distribution on an ultradifferentiable manifold is shown to be well defined.

With these results at hand and an $\M$-almost analytic version of the almost-analytic implicit function theorem used in
Lamel \cite{MR2085046} and Berhanu-Xiao \cite{MR3405870}
it is possible to prove the ultradifferentiable version of the regularity result of Lamel:
\begin{Thm}\label{Reflectionsprinciple1}
	Let $M\subseteq\C^N$ and $M^\prime\subseteq\C^{N^\prime}$ be two generic ultradifferentiable 
	submanifolds of class $\{\M\}$, 
	$p_0\in M$, $p^\prime_0\in M^\prime$ and $H\!:(M,p_0)\rightarrow (M^\prime,p_0^\prime)$
	a $\CC^{k_0}$-CR mapping that is $k_0$-nondegenerate at $p_0$. Suppose furthermore that $H$
	extends continuously to a holomorphic map in a wedge $\W$ with edge $M$.
	Then $H$ is ultradifferentiable of class $\{\M\}$ in a neighbourhood of $p_0$.
\end{Thm}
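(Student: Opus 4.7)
The plan is to follow the scheme of Lamel \cite{MR2085046}, systematically replacing each smooth-category ingredient with its $\{\M\}$-analogue. First, I would extract microlocal information from the wedge extension: an ultradifferentiable analogue of the Baouendi-Chang-Treves theorem \cite{MR723811}, whose $\WF_\M$-version should follow from the almost-analytic extension theory developed in \cite{Fuerdoes1}, implies that the fibre of $\WF_\M H$ at $p_0$ lies in a half-space of the conormal bundle of $M$ determined by the direction of $\W$, and in particular contains no pair of opposite covectors.

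Next, I would exploit the finite nondegeneracy to set up a reflection identity. Choose $\{\M\}$ defining functions $\rho'=(\rho'_1,\dots,\rho'_{d'})$ for $M'$ near $p_0'$ and an $\{\M\}$ frame $\bar L_1,\dots,\bar L_n$ of CR vector fields for $M$ near $p_0$. Differentiating $\rho'(H(z),\overline{H(z)})=0$ by $\bar L^\alpha$ for $|\alpha|\le k_0$ produces a system whose Jacobian in the $\overline{H}$-variables is invertible at $p_0$, precisely by the $k_0$-nondegeneracy hypothesis. Since $\M$ is regular, the $\{\M\}$-implicit function theorem of \cite{MR3462072} then supplies a $\{\M\}$ map $\Psi$ with
\[ \overline{H(z)} \;=\; \Psi\bigl(z,\,(\bar L^\alpha H(z))_{|\alpha|\le k_0}\bigr) \]
in a neighbourhood of $p_0$ in $M$.

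The third step is to transport this identity off $M$ by means of $\M$-almost analytic extensions. The map $\Psi$ admits an $\M$-almost analytic extension $\tilde\Psi$, and the wedge extension of $H$ provides an $\M$-almost analytic extension of $H$ on the $\W$-side of $M$; together they yield an extended identity whose $\bar\partial$-defect decays at the $\M$-controlled rate required by Dyn'kin's criterion. Solving this extended identity by an $\M$-almost analytic implicit function theorem produces an $\M$-almost analytic extension of $\bar H$ from the side of $M$ opposite to $\W$. This forces the fibre of $\WF_\M H$ at $p_0$ to be empty in the direction opposite to $\W$ as well. Combining this with Step 1, $\WF_\M H$ is empty over $p_0$, and the characterization of $\{\M\}$-regularity via the ultradifferentiable wavefront set from \cite{Fuerdoes1} gives $H\in\{\M\}$ near $p_0$.

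The main obstacle is the $\M$-almost analytic implicit function theorem required in Step 3. The smooth versions used by Lamel and Berhanu-Xiao \cite{MR2085046,MR3405870} only need a flat $\bar\partial$-defect, whereas here the defect must decay at the precise rate dictated by the weight sequence $\M$ in order to feed into the Dyn'kin-type equivalence between $\M$-almost analyticity and $\M$-microlocal regularity. Achieving this $\{\M\}$-quantitative version, and carefully tracking the constants through the implicit function iteration against the $\{\M\}$ Cauchy estimates \eqref{initialEst}, is the technical heart of the argument and the point where the regularity of the weight sequence $\M$ enters essentially.
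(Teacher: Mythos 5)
Your proposal follows essentially the same route as the paper: one-sided $\WF_\M$ control of $H$ from the wedge via $\M$-almost analytic boundary values, a reflection identity supplied by $k_0$-nondegeneracy together with an $\M$-almost analytic implicit function theorem, and then two-sided extendability plus ellipticity of the CR system to empty out $\WF_\M H$ over $p_0$. The technical ingredient you flag as the main obstacle is precisely the paper's Theorem \ref{Lamel-implicit} (proved there by complexifying the real and imaginary parts via Dyn'kin extensions), so your argument is complete once that result is granted.
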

For the definition of finite nondegeneracy of a CR mapping 
 we refer to the beginning of  section \ref{sec:ultraRefl}.

More precisely this paper is structured as follows.
In section \ref{regDC} the results on regular Denjoy-Carleman classes and ultradifferentiable manifolds that are needed are discussed.
In section \ref{UltraWF} we first recall the results from Dyn'kin \cite{zbMATH03751341,MR0587795}
on the almost analytic extension of ultradifferentiable functions.
Furthermore we give the definition and basic results on the ultradifferentiable
wavefront set according to H\"ormander \cite{MR1996773} and close the section
by presenting the geometric theory for the ultradifferentiable wavefront set 
given in \cite{Fuerdoes1}.

In section \ref{sec:ultraCR} basic definitions and first results on ultradifferentiable CR manifolds are given, 
whereas the proofs of Theorem \ref{Reflectionsprinciple1} and of ultradifferentiable versions of other regularity results of Lamel and Berhanu-Xiao are presented in
section \ref{sec:ultraRefl}. 
The last section is devoted to present essentially the generalization
of \cite{MR3593674} concerning the smoothness of infinitesimal CR automorphisms 
to regular Denjoy-Carleman classes.
 We end by examining smooth infinitesimal CR automorphisms 
 on formally holomorphic nondegenerate quasianalytic CR submanifolds.
 
 The author was supported by the Austrian Science Fund FWF, international cooperation  project Nr.\ I01776 and the Czech Science Foundation GACR grant 17-19437S. 
\section{Regular Denjoy-Carleman classes}\label{regDC}
%In the following sections let $\Omega\subseteq\R^n$ be an open set.
In this section we summarize the results for Denjoy-Carleman classes that we need in the following. 
For a more detailed presentation see \cite{Fuerdoes1}.
Note that, unless stated otherwise, $\Omega\subseteq\R^n$ will be an open set.
\begin{Def}\label{Regular}
A sequence $\M=(m_k)_k$ is a regular weight sequence iff it satisfies the following conditions.
\begin{align*}
\tag{\text{M}1} &m_0=m_1=1\label{normalization}\\
\tag{\text{M}2} \label{derivclosed} &\sup_k\sqrt[\leftroot{1}k]{\frac{m_{k+1}}{m_k}}<\infty\\
\tag{\text{M}3}\label{stlogconvex} &m_k^2\leq m_{k-1}m_{k+1}\qquad k\in\N\\
\tag{\text{M}4}\label{analyticincl} &\lim_{k\rightarrow\infty}\sqrt[\leftroot{2}k]{m_k}=\infty
\end{align*}
\end{Def}
\begin{Def}
Let $\M$ be a regular weight sequence.
Then we say that a smooth function $f\in\E(\Omega)$ is ultradifferentiable of 
class $\{\M\}$ iff for all compact sets $K\subseteq\Omega$ there are constants 
$C,h>0$ such that
\begin{equation}\label{DCdefiningEst}
\bigl\lvert \partial^\alpha f(x)\bigr\rvert\leq Ch^{\lvert\alpha\rvert}m_{\lvert\alpha\rvert}
\lvert\alpha\rvert!
\end{equation}
for all $x\in K$.
The space of all ultradifferentiable functions of class $\{\M\}$ is denoted by 
$\E_\M(\Omega)$. It is sometimes also called the Denjoy-Carleman class associated to
$\M$.
\end{Def}
\begin{Ex}
If $s> 0$ consider the regular weight sequence $\M^s=(k!^s)_k$. 
Its associated Denjoy-Carleman class is the Gevrey class
$\G^{s+1}(\Omega)=\E_{\M^s}(\Omega)$ of order $s+1$ on $\Omega$,
 c.f.\ \cite{MR1249275}.

On the other hand, the constant sequence $\M^0=(1)_k$ gives the space $\O(\Omega)$ of 
real-analytic functions on $\Omega$. Note that $\M^0$ is not regular in the
sense of Definition \ref{Regular}.
\end{Ex}
We consider here only regular weight sequences but might occasionally omit the word ``regular''.

If $\M$ and $\NN=(n_k)_k$ are two weight sequences then we write $\M\preccurlyeq \NN$ iff
there is a constant $Q$ such that $m_k\leq Q^kn_k$.
It holds that $\E_{\M}\subseteq\E_{\NN}$ if and only if $\M\preccurlyeq\NN$.
Thus we see that \eqref{analyticincl} means that $\O\subsetneq\E_{\M}$ and \eqref{derivclosed} implies that $\E_{\M}$ is 
\emph{closed under derivation}, i.e.\ if $f\in\E_{\M}(\Omega)$ then
$\partial^\alpha f\in\E_{\M}(\Omega)$ for all multi-indices $\alpha\in\N_0^n$.
Furthermore we have
\begin{Lem}[c.f.\ Remark 2.5 in \cite{Fuerdoes1}]\label{HadamardLemma}
Let the Denjoy-Carleman class $\E_{\M}$ be closed under derivation closed and
suppose that $f\in\E_\M(\Omega)$ and $f(x_1,\dotsc ,x_{j-1},a,x_{j+1},\dotsc ,x_n)=0$ 
for some fixed $a\in\R$ and all $x_k$, $k\neq j$, with the property that 
$(x_1,\dotsc ,x_{j-1},a,x_{j+1},\dotsc ,x_n)\in\Omega$. 
Then there exists some $g\in\E_{\M}(\Omega)$ such that
	\begin{equation*}
	f(x)=(x_j -a) g(x).
	\end{equation*}
\end{Lem}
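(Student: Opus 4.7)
The strategy is the classical Hadamard trick adapted to the ultradifferentiable setting: define $g$ by an integral formula and then use condition \eqref{derivclosed} to control the resulting derivative estimates.

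The plan is to set, for $x\in\Omega$,
\begin{equation*}
g(x)=\int_0^1 \partial_j f\bigl(x_1,\dotsc,x_{j-1},a+t(x_j-a),x_{j+1},\dotsc,x_n\bigr)\,dt.
\end{equation*}
First I would check that the identity $f(x)=(x_j-a)g(x)$ holds: this is the fundamental theorem of calculus applied to $t\mapsto f(x_1,\dotsc,a+t(x_j-a),\dotsc,x_n)$, using the hypothesis that $f$ vanishes when its $j$-th argument equals $a$. Since $\E_\M$ is closed under derivation, $\partial_j f\in\E_\M(\Omega)$, so $g$ is at least smooth on $\Omega$, and differentiation under the integral sign gives
\begin{equation*}
\partial^\alpha g(x)=\int_0^1 t^{\alpha_j}\,(\partial^\alpha\partial_j f)\bigl(x_1,\dotsc,a+t(x_j-a),\dotsc,x_n\bigr)\,dt
\end{equation*}
for every multi-index $\alpha\in\N_0^n$.

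Next I would derive the class $\{\M\}$ estimate for $g$ on an arbitrary compact $K\subseteq\Omega$. Pick a compact $\widetilde K\subseteq\Omega$ containing the segment $\{(x_1,\dotsc,a+t(x_j-a),\dotsc,x_n):x\in K,\,t\in[0,1]\}$, and let $C,h>0$ be the constants from \eqref{DCdefiningEst} applied to $\partial_j f\in\E_\M(\widetilde K)$. Since $|t^{\alpha_j}|\leq 1$ on $[0,1]$, we obtain
\begin{equation*}
\bigl\lvert\partial^\alpha g(x)\bigr\rvert\leq C h^{\lvert\alpha\rvert+1}m_{\lvert\alpha\rvert+1}(\lvert\alpha\rvert+1)!\qquad(x\in K).
\end{equation*}
The task is then to absorb $m_{\lvert\alpha\rvert+1}(\lvert\alpha\rvert+1)!$ into $m_{\lvert\alpha\rvert}\lvert\alpha\rvert!$, up to a geometric factor in $\lvert\alpha\rvert$.

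The main (and only nontrivial) step is this absorption, and it is exactly what condition \eqref{derivclosed} is designed for. By \eqref{derivclosed} there is a constant $A\geq 1$ with $m_{k+1}/m_k\leq A^k$ for all $k$, so
\begin{equation*}
m_{\lvert\alpha\rvert+1}(\lvert\alpha\rvert+1)!\leq A^{\lvert\alpha\rvert}(\lvert\alpha\rvert+1)\,m_{\lvert\alpha\rvert}\lvert\alpha\rvert!\leq (2A)^{\lvert\alpha\rvert+1}m_{\lvert\alpha\rvert}\lvert\alpha\rvert!,
\end{equation*}
using $(\lvert\alpha\rvert+1)\leq 2^{\lvert\alpha\rvert+1}$. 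Inserting this into the previous bound yields
\begin{equation*}
\bigl\lvert\partial^\alpha g(x)\bigr\rvert\leq C'\,(h')^{\lvert\alpha\rvert}m_{\lvert\alpha\rvert}\lvert\alpha\rvert!,\qquad C'=2ACh,\ h'=2Ah,
\end{equation*}
which is precisely \eqref{DCdefiningEst} for $g$ on $K$. Since $K$ was arbitrary, $g\in\E_\M(\Omega)$, completing the proof. The whole argument hinges on the derivation-closedness condition \eqref{derivclosed}; without it, the integral representation would still produce a smooth $g$ but there would be no control of the ultradifferentiable norms.
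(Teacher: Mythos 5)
The paper does not prove this lemma itself --- it is quoted from Remark 2.5 of \cite{Fuerdoes1} --- so there is no in-paper argument to compare against; your proof is the standard Hadamard-trick argument one would expect there, and the quantitative heart of it (differentiating under the integral, picking up one extra derivative of $f$, and absorbing $m_{\lvert\alpha\rvert+1}(\lvert\alpha\rvert+1)!$ into $(2Ah)^{\lvert\alpha\rvert}m_{\lvert\alpha\rvert}\lvert\alpha\rvert!$ via \eqref{derivclosed}) is correct.

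One point needs repair: your global integral formula for $g$ presupposes that for every $x\in\Omega$ the segment $\{(x_1,\dotsc,a+t(x_j-a),\dotsc,x_n):t\in[0,1]\}$ lies in $\Omega$, and likewise that the compact set $\widetilde K$ containing all such segments over $x\in K$ sits inside $\Omega$. For a general open set $\Omega$ (which is all the lemma assumes) this fails --- the foot point $(x_1,\dotsc,a,\dotsc,x_n)$ need not even belong to $\Omega$. The fix is to note that membership in $\E_\M$ is a local property and that $g$ is forced to equal $f(x)/(x_j-a)$ off the hyperplane $\{x_j=a\}$ and $\partial_jf$ on it: away from the hyperplane this quotient is locally of class $\{\M\}$ since $(x_j-a)^{-1}$ is real-analytic there and $\E_\M$ contains $\O$ and is a ring; near a point of the hyperplane you run your integral formula on a small ball centered at that point, where the segments do stay inside and your estimate goes through verbatim. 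With that localization the argument is complete.
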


In fact, if $\M$ is a regular weight sequence then the associated Denjoy-Carleman class satisfies also
the following stability properties.
%Furthermore we have the following theorem.
\begin{Thm}\label{CMStability}
	Let $\M$ be a regular weight sequence and $\Omega_1\subseteq\R^m$ and 
	$\Omega_2\subseteq\R^n$ open sets. Then the following holds:
	\begin{enumerate}
		\item The algebra $\E_{\M}(\Omega)$ is \emph{inverse closed}, i.e.\
		 if $f\in\E_\M(\Omega)$ does not vanish at any point of $\Omega$ then
	$1/f\in\E_\M(\Omega)$
		(c.f.\ \cite{MR3462072} and the remarks therein).
		\item The class $\E_\M $ is \emph{closed under composition} (\cite{MR0158261} see also \cite{MR2061220}) i.e.\ let $F\!:\Omega_1\rightarrow\Omega_2$ be
		an $\E_\M$-mapping, that is each
		component $F_j$ of $F$ is ultradifferentiable of class $\{\M\}$ in $\Omega_1$, and $g\in\E_\M(\Omega_2)$.
		Then also $g\circ F\in \E_\M(\Omega_1)$.
		\item The \emph{inverse function theorem} holds in the Denjoy-Carleman class $\E_\M$ (\cite{MR531445}): 
		Let $F:\Omega_1\rightarrow \Omega_2$ be an $\E_\M$-mapping and $p_0\in\Omega_1$ such that 
		the Jacobian $F^\prime(p_0)$ is invertible. Then there exist neighbourhoods $U$ of $p_0$ in $\Omega_1$
		and $V$ of $q_0=F(x_0)$ in $\Omega_2$ and a $\E_\M$-mapping $G:V\rightarrow U$ such that $G(q_0)=p_0$
		and $F\circ G=\id_V$.
		\item The \emph{implicit function theorem} is valid in $\E_\M$ (\cite{MR531445}): Let $F:\R^{n+d}\supseteq\Omega\rightarrow \R^d$
		be a $\E_\M$-mapping and $(x_0,y_0)\in \Omega$ such that $F(x_0,y_0)=0$ and
		$\tfrac{\partial F}{\partial y}(x_0,y_0)$
		is invertible. Then there exist open sets $U\!\subseteq\!\R^n$ and $V\!\subseteq\! \R^d$ with 
		$(x_0,y_0)\!\in\! U\!\times\! V\!\subseteq\!\Omega$ and an $\E_\M$-mapping $G:\,U\rightarrow V$ 
		such that $G(x_0)=y_0$
		and $F(x,G(x))=0$ for all $x\in V$.
	\end{enumerate}
\end{Thm}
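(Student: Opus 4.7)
The plan is to establish each assertion by invoking the methods of the cited references, being explicit about which property of a regular weight sequence drives each step. I would begin with (2), closure under composition, since (1), (3) and (4) all build on it. The standard tool is the Fa\`a di Bruno formula, which expands $\partial^\alpha(g\circ F)$ as a sum over set partitions $\pi=\{\beta_1,\dotsc,\beta_r\}$ of $\alpha$ of terms involving $(\partial^\gamma g)(F(x))\prod_i\partial^{\beta_i}F_{\sigma(i)}(x)$. Assuming $g$ and each $F_j$ satisfy \eqref{DCdefiningEst} with constants $C',h'$ and $C,h$ respectively, the strict log-convexity \eqref{stlogconvex} yields $\prod_i m_{|\beta_i|}\le m_{|\alpha|}$, while the moderate growth condition \eqref{derivclosed} provides a constant $Q$ with $m_{k+1}\le Q^{k+1}m_k$, which absorbs the remaining combinatorial losses $r!|\alpha|!/\prod|\beta_i|!$ into a new base constant $\tilde h^{|\alpha|}$ after enlarging $h,h'$. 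Collecting the terms on any compact $K$ with $F(K)\Subset\Omega_2$ yields an estimate of the required form, exactly as in \cite{MR0158261,MR2061220}.

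For (1), given non-vanishing $f\in\E_\M(\Omega)$, write $1/f=\iota\circ f$ with $\iota(t)=1/t$; since \eqref{analyticincl} gives $\O\subset\E_\M$, the map $\iota$ is of class $\{\M\}$ on a neighbourhood of any compact subset of $f(\Omega)\setminus\{0\}$, and (2) applies. This reproduces \cite{MR3462072}. For (3), I would follow the Komatsu-type argument in \cite{MR531445}: after an affine change of coordinates one may assume $p_0=0$, $F(0)=0$, $F'(0)=\Id$, so the inverse $G$ is the unique fixed point of $\Phi(G):=G-(F\circ G-\id_V)$ on a small ball. The iterates $G_{k+1}=\Phi(G_k)$ lie in $\E_\M$ by (2), and inversion of the Jacobian matrix (needed to get $\E_\M$-estimates on $G$ directly via $G'=(F'\circ G)^{-1}$) is permissible by (1) applied to $\det F'\circ G$, so one obtains $\E_\M$-bounds on $G$ uniformly in $k$ and passes to the limit.

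Assertion (4) then reduces to (3) by the classical trick: the map $\Phi(x,y):=(x,F(x,y))$ is of class $\{\M\}$ by stability under polynomial operations and its Jacobian at $(x_0,y_0)$ is invertible iff $\partial F/\partial y(x_0,y_0)$ is, so (3) gives a local inverse $\Phi^{-1}(x,z)=(x,\Psi(x,z))\in\E_\M$, and $G(x):=\Psi(x,0)$ solves the implicit equation. The main obstacle is the combinatorial estimate underlying (2); it is there that the full strength of both \eqref{derivclosed} and \eqref{stlogconvex} enters, and \emph{only} under these conditions does the Fa\`a di Bruno sum close inside $\E_\M$. Once (2) is in place, (1), (3) and (4) follow by the bootstrapping sketched above and there is no further use of the fine structure of $\M$.
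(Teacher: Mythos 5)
The paper offers no proof of Theorem \ref{CMStability} at all --- it is stated as a summary of known results with citations to Roumieu, Bierstone--Milman, Komatsu and Rainer--Schindl --- and your sketch is a faithful reconstruction of exactly those classical arguments (Fa\`a di Bruno with the log-convexity \eqref{stlogconvex} and normalization \eqref{normalization} for composition, reduction of inverse closedness to composition with $t\mapsto 1/t$ via \eqref{analyticincl}, and reduction of (4) to (3) by the graph trick), so this counts as essentially the same approach. The one caveat is in (3): the Roumieu-type constants produced by (2) are not uniform along the Picard iterates, so rather than passing to the limit of $G_k$ one should, as in the cited Komatsu argument, take the smooth inverse $G$ from the classical theorem and prove the $\E_\M$-bounds by induction on the order of the derivative using the recursion $G'=(F'\circ G)^{-1}$ together with (1) and the Fa\`a di Bruno combinatorics.
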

In particular we note that
$\E_\M(\Omega)$ is \emph{closed under solving ODEs}. More precisely we have the following result.
\begin{Thm}[\cite{MR1128962}, see also \cite{MR575993}]\label{ClosednessODE}
	Let $\M$ be a regular weight sequence, $0\in I\subseteq\R$ an open interval, 
	$U\subseteq \R^n$, $V\subseteq\R^d$ open and $F\in\E_\M(I\times U\times V)$. 
	
	Then the initial value problem
	\begin{align*}
	x^\prime(t)&=F(t,x(t),\lambda) & t&\in I,\,\lambda \in V\\
	x(0)&=x_0 &x_0 &\in U
	\end{align*}
	has locally a unique solution $x$ that is ultradifferentiable near $0$.
	
	More precisely, there is an open set $\Omega\subseteq I\times U\times V$ that contains the point 
	$(0,x_0,\lambda)$ and an $\E_\M$-mapping $x=x(t, y,\lambda): \Omega\rightarrow U$ such that
	the function $t\mapsto x(t,y_0,\lambda_0)$ is the solution of the initial value problem
	\begin{align*}
	x^\prime (t)&=F(t,x(t),\lambda_0)\\
	x(0)&=y_0.
	\end{align*}
\end{Thm}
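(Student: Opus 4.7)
The plan is to separate existence/uniqueness from the regularity estimate. First, reabsorb the initial datum $y$ and the parameter $\lambda$ into an enlarged state by appending the trivial equations $y^\prime=0$ and $\lambda^\prime=0$; then without loss of generality one may assume $F=F(t,x)$ with $F\in\E_\M(I\times U)$ and look at $x^\prime(t)=F(t,x(t))$, $x(0)=y_0$. Classical ODE theory (Picard--Lindel\"of together with smooth dependence on initial data and parameters) produces a unique smooth solution $x=x(t,y,\lambda)$ defined on some open neighbourhood $\Omega$ of $(0,y_0,\lambda_0)$ in $I\times U\times V$. The only remaining task is to upgrade the regularity of this smooth solution to class $\{\M\}$.

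To prove $x\in\E_\M(\Omega)$ one argues by induction on the order of differentiation. Fix a compact product set on which every derivative of $F$ satisfies the defining estimate \eqref{DCdefiningEst} with constants $C_F, h_F$. From the equation itself one has $\partial_t x=F(t,x,\lambda)$, and higher mixed derivatives $\partial_t^j\partial_y^\beta\partial_\lambda^\gamma x$ are obtained by differentiating the ODE and using Fa\`a di Bruno's formula to express them as finite sums of products of derivatives of $F$ and of $x$ of strictly lower total $t$-order. The sought estimate
\[
\bigl\lvert\partial^\alpha x(t,y,\lambda)\bigr\rvert \leq C\, h^{\lvert\alpha\rvert} m_{\lvert\alpha\rvert} \lvert\alpha\rvert!
\]
is then proved by induction on $\lvert\alpha\rvert$, combining a Gronwall-type argument in the $t$-variable with combinatorial bounds on the resulting Fa\`a di Bruno sums. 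The log-convexity \eqref{stlogconvex} yields the crucial inequality $m_j m_{k-j}\leq m_k$ (up to a harmless constant), while \eqref{derivclosed} absorbs the one-step shift that arises when one differentiates the explicit $t$-dependence of $F$.

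The main obstacle is closing this recursion: because $x$ appears on both sides of the identity $\partial_t x=F(t,x,\lambda)$, the bound is self-referential, and a naive application of the composition closure of $\E_\M$ (part (2) of Theorem~\ref{CMStability}) is not quantitative enough. Instead one must run a Cauchy-majorant style argument, choosing the constant $h$ large enough, and $\Omega$ small enough, so that the induction step reproduces the ansatz on a slightly smaller neighbourhood. This delicate quantitative refinement of composition closure is the heart of Komatsu's argument \cite{MR1128962}, on which we rely for the final bookkeeping, with the $\E_\M$-regularity of the resulting map $(t,y,\lambda)\mapsto x(t,y,\lambda)$ following by the same type of Fa\`a di Bruno induction applied simultaneously in all variables.
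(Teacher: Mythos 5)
The paper does not actually prove this statement: Theorem~\ref{ClosednessODE} is imported verbatim from Komatsu \cite{MR1128962} (see also \cite{MR575993}) and is used as a black box, so there is no in-paper argument to compare yours against. Judged on its own terms, your proposal is an accurate outline of how the proof goes in the literature --- existence and uniqueness from Picard--Lindel\"of, then an induction on the order of differentiation using Fa\`a di Bruno, with log-convexity \eqref{stlogconvex} supplying $m_jm_{k-j}\leq m_k$ and \eqref{derivclosed} absorbing the derivative shift --- and you correctly identify that the real difficulty is the self-referential nature of the estimate for $\partial^\alpha x$, which forces a majorant-type argument rather than a direct appeal to Theorem~\ref{CMStability}(2). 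But precisely that step is the entire content of the theorem, and your write-up explicitly outsources it (``the heart of Komatsu's argument \dots on which we rely for the final bookkeeping''). So what you have is a citation with an annotated roadmap, not a proof; that puts you on the same footing as the paper, no worse and no better.

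One technical wobble worth fixing if you ever do carry out the argument: appending the trivial equation $\lambda^\prime=0$ correctly turns the parameter into part of the state, but appending $y^\prime=0$ does not dispose of the dependence on the initial datum --- that reduction goes the other way (it converts parameter dependence into initial-condition dependence). To treat the initial datum as a parameter you should instead translate, setting $u(t)=x(t)-y$, so that $u^\prime=F(t,u+y,\lambda)$ with the fixed initial condition $u(0)=0$ and $y$ now appearing as a genuine parameter of the right-hand side. As written, your ``without loss of generality $F=F(t,x)$'' still leaves the $y$-dependence of the solution unaccounted for, and the subsequent induction must be run jointly in $(t,y,\lambda)$ in any case.
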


Using Theorem \ref{CMStability} we are able to define
\begin{Def}
	Let $M$ be a smooth manifold and $\M$ a  weight sequence. We say that $M$ is an ultradifferentiable 
	manifold of class $\{\M\}$ iff there is an atlas $\mathcal{A}$ of $M$ that consists of charts such that 
	\begin{equation*}
	\varphi^\prime\circ\varphi^{-1}\in\E_{\M}
	\end{equation*}
	for all $\varphi,\varphi^{\prime}\in\mathcal{A}$.
\end{Def}
If $M\subseteq \R^N$ is an ultradifferentiable submanifold of class $\{\M\}$ then the following characterization is proven exactly as
the analogous result in the smooth setting.
\begin{Prop}
	Let $M\subset\R^N$ be a smooth manifold  of dimension $n$ and $p\in\M$ and $\M$ be a weight sequence.
	The following statements are equivalent:
	\begin{enumerate}
		\item The manifold $M$ is ultradifferentiable of class $\{\M\}$ near $p$.
		\item There are an open neighbourhood $U\subseteq\R^N$ of $p$ and an $\E_\M$-mapping 
		$\rho:\,U\rightarrow \R^{N-n}$ such that $d\rho$ has rank $N-n$ on $W$ and 
		\begin{equation*}
		\rho^{-1}(0)=M\cap U.
		\end{equation*}
	\end{enumerate}
\end{Prop}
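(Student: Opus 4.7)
The plan is to adapt the classical smooth/analytic proof by replacing each use of the inverse or implicit function theorem with its ultradifferentiable counterpart from Theorem \ref{CMStability}. Both implications are essentially one application of the appropriate statement, so the proof is short.

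For $(2)\Rightarrow(1)$: Since $d\rho(p)$ has rank $N-n$, after permuting the coordinates of $\R^N$ we may assume that the submatrix $\partial\rho/\partial(x_{n+1},\ldots,x_N)$ of the Jacobian is invertible at $p$. The ultradifferentiable implicit function theorem (Theorem \ref{CMStability}(4)) then produces open sets $V\subseteq\R^n$ and $V'\subseteq\R^{N-n}$ with $(p_1,\ldots,p_n)\in V$, and an $\E_\M$-map $g\colon V\to V'$ such that, for $(x',y)\in V\times V'$ near $p$,
\begin{equation*}
\rho(x',y)=0\quad\Longleftrightarrow\quad y=g(x').
\end{equation*}
Thus $M\cap U$ is locally the graph of $g$, and the projection $(x',g(x'))\mapsto x'$ is an $\E_\M$-chart around $p$, which shows that $M$ is an ultradifferentiable submanifold of class $\{\M\}$ near $p$.

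For $(1)\Rightarrow(2)$: By assumption there is an open $V\subseteq\R^n$ and an $\E_\M$-immersion $\varphi\colon V\to\R^N$ with $\varphi(u_0)=p$ and $\varphi(V)=M\cap W$ for some open neighbourhood $W\subseteq\R^N$ of $p$. Pick vectors $v_1,\ldots,v_{N-n}\in\R^N$ complementary to the tangent space $d\varphi(u_0)(\R^n)$, and define the tubular map
\begin{equation*}
\Phi\colon V\times\R^{N-n}\longrightarrow\R^N,\qquad \Phi(u,t):=\varphi(u)+\sum_{j=1}^{N-n} t_j\,v_j.
\end{equation*}
Being polynomial in $t$ with $\E_\M$-coefficients in $u$, the map $\Phi$ is of class $\E_\M$, and $d\Phi(u_0,0)$ is invertible by construction. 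By the ultradifferentiable inverse function theorem (Theorem \ref{CMStability}(3)), $\Phi$ admits an $\E_\M$-inverse $\Psi$ on an open neighbourhood $U\subseteq\R^N$ of $p$. Setting $\rho:=\pi\circ\Psi$, where $\pi\colon\R^n\times\R^{N-n}\to\R^{N-n}$ is the projection onto the last $N-n$ coordinates, gives an $\E_\M$-map $\rho\colon U\to\R^{N-n}$ whose differential is surjective everywhere on $U$ and which satisfies $\rho^{-1}(0)=\Phi(V\times\{0\})\cap U=M\cap U$ after shrinking $U$ if necessary.

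There is no serious obstacle: the entire argument is a direct transcription of the classical proof, and every step is sanctioned by the stability results already collected in Theorem \ref{CMStability}. The only cosmetic point requiring care is to ensure the auxiliary map $\Phi$ stays within $\E_\M$ (which is immediate from its explicit form) and to shrink the neighbourhoods $U$, $V$, $W$ consistently so that the local graph/image identifications hold globally on the chosen sets.
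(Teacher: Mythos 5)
Your proof is correct and is precisely the approach the paper intends: the paper itself only remarks that the proposition "is proven exactly as the analogous result in the smooth setting," and your argument is that classical proof with the inverse and implicit function theorems replaced by their $\E_\M$-versions from Theorem \ref{CMStability}. The shrinking of neighbourhoods you flag at the end is the only point of care, and you handle it adequately.
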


A mapping $F\!:\, M\rightarrow N$ between two manifolds of class $\{\M \}$ is 
ultradifferentiable of class $\{\M \}$ iff $\psi\circ F\circ \varphi^{-1}\in\E_{\M}$
for any charts $\varphi$ and $\psi$ of $M$ and $N$, respectively.
Thus it is possible to consider the category of ultradifferentiable manifolds with all
the usual constructions like vector fields, differential forms and so on.
\begin{Def}
	Let $M$ be an ultradifferentiable manifold of class $\{\M\}$. We say that
	a smooth vector bundle $\pi: E\rightarrow M$ is an ultradifferentiable vector bundle of class $\{M\}$ iff
	for any point $p\in M$ there is a neighbourhood $U$ of $p$ and 
	a local trivialization $\chi$ of class $\{\M\}$ on $U$.
\end{Def}
\begin{Rem}\label{flatness}
	Let $E$ be an ultradifferentiable vector bundle of class $\{\M\}$. 
	Then $E$ can also be considered as a smooth vector bundle or as a vector bundle of class $\NN$
	for any weight sequence $\NN\succcurlyeq\M$.
	We observe in particular that a local basis of $\E_\M (M,E)$ is also a local basis of $\E_{\NN}(M,E)$ and $\E(M,E)$, respectively.
\end{Rem}
We denote by $\mathfrak{X}_\M(M)=\E_\M(M, TM)$ the Lie algebra of ultradifferentiable vector fields on $M$.
Note that, if $\M$ is a regular weight sequence, an integral curve of an ultradifferentiable vector field of class $\{\M\}$ 
%that is a vector field with 
%$\E_\M$-coefficients, 
is an $\E_\M$-curve by Theorem \ref{ClosednessODE}.

The next result is an ultradifferentiable version of Sussmann's Theorem \cite{MR0321133}.
\begin{Thm}\label{LocalSussman}
	Let $p_0\in\Omega$ and a collection $\mathfrak{D}$ of ultradifferentiable vector fields of class $\{\M\}$.
	Then there exists an ultradifferentiable submanifold $W$ of $\Omega$ through $p_0$ such that all vector fields
	in $\mathfrak{D}$ are tangent to $W$ at all points of $W$ and such that the following holds:
	\begin{enumerate}
		\item The germ of $W$ at $p_0$ is unique, i.e.\  
		if $W^\prime$ is an ultradifferentiable submanifold of $\Omega$ containing $p_0$ and to which all
		vector fields of $\mathfrak{D}$ are tangent at every point of $W^\prime$ then there is a neighbourhood 
		$V\subseteq\Omega$ of $p_0$ such that $W\cap V\subseteq W^\prime\cap V$.
		\item For every open set $U\subseteq\Omega$ containing $p_0$ there exists $J\in\N$ 
		and open neighbourhoods $V_1\subseteq V_2\subseteq U$ of $p_0$ such that every point $p\in W\cap V_1$
		can be reached from $p_0$ by a polygonal path of $J$ integral curves of vector fields in $\mathfrak{D}$ contained in $W\cap V_2$.
	\end{enumerate}
\end{Thm}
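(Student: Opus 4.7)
The plan is to adapt the classical proof of Sussmann's orbit theorem to the ultradifferentiable category, leveraging the fact that the category $\E_\M$ is stable under all the operations used in the smooth proof: composition, inversion, the implicit function theorem, and the solution of ODEs with parameters (Theorems \ref{CMStability} and \ref{ClosednessODE}). First, for every $X\in\mathfrak{D}$ the local flow $\phi_t^X(p)$ is of class $\{\M\}$ in $(t,p)$ by Theorem \ref{ClosednessODE}, and hence by the composition property any finite iteration
\[
\Phi(t_1,\dotsc ,t_k;p)=\phi_{t_k}^{X_k}\circ\cdots\circ\phi_{t_1}^{X_1}(p),\qquad X_j\in\mathfrak{D},
\]
is an $\E_\M$-mapping in all its variables on its domain of definition. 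Fixing the parameters $t_j$ produces an $\E_\M$-diffeomorphism whose inverse, by the ultradifferentiable inverse function theorem, is again of class $\{\M\}$. In particular, the pushforward $\Phi_\ast Z$ of any $Z\in\mathfrak{D}$ by such a $\Phi$ is an ultradifferentiable vector field on its domain.

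Following Sussmann, I introduce the orbit distribution $\Delta\subseteq T\Omega$ spanned pointwise by vectors of the form $(\Phi_\ast Z)(p)$ over all iterated flow maps $\Phi$ as above and all $Z\in\mathfrak{D}$. Set $d=\dim\Delta(p_0)$ and pick iterated flows $\Phi_1,\dotsc ,\Phi_d$ and vector fields $Z_1,\dotsc ,Z_d\in\mathfrak{D}$ such that the pushforwards $Y_j:=(\Phi_j)_\ast Z_j$ yield a basis $Y_1(p_0),\dotsc ,Y_d(p_0)$ of $\Delta(p_0)$. The mapping
\[
\Psi(t_1,\dotsc ,t_d)=\phi_{t_1}^{Y_1}\circ\cdots\circ\phi_{t_d}^{Y_d}(p_0)
\]
is then of class $\{\M\}$ near $0\in\R^d$, and its differential at the origin is an isomorphism onto $\Delta(p_0)$. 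The ultradifferentiable inverse function theorem therefore makes $\Psi$ an $\E_\M$-immersion near $0$, and I define $W$ to be its image. To verify tangency of every $X\in\mathfrak{D}$ to $W$, one argues as in the smooth case that $\Delta$ is invariant under the flows in $\mathfrak{D}$ and that $\Delta(p)=T_pW$ for $p\in W$ near $p_0$; this is a purely algebraic statement that transports verbatim, since each step uses only that the relevant objects belong to $\E_\M$.

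Uniqueness of the germ (part (1)) follows because any ultradifferentiable $W'$ tangent to all of $\mathfrak{D}$ is invariant under the flows $\phi_t^X$ for $X\in\mathfrak{D}$, so it is also invariant under $\phi_t^{Y_j}=\Phi_j\circ\phi_t^{Z_j}\circ\Phi_j^{-1}$; hence $\Psi$ maps a neighbourhood of $0$ into $W'$, yielding $W\cap V\subseteq W'\cap V$ for some $V$. The polygonal path statement (part (2)) is obtained by unfolding each flow of a pushforward $Y_j$ as the conjugated composition of at most $2k_j+1$ flows of elements of $\mathfrak{D}$; this gives the required uniform bound $J=\sum_j(2k_j+1)$, and shrinking $V_1$ keeps all intermediate points inside a prescribed $V_2$ by continuity. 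The main obstacle I anticipate is not conceptual but bookkeeping: one has to verify carefully at every step that the composed flow maps, their inverses, and the resulting pushforwards remain in $\E_\M$ on explicit domains, and that $\Psi$ has rank $d$ at the origin. All of this is guaranteed by the stability properties of regular Denjoy-Carleman classes recalled in section \ref{regDC}, so no genuinely new analytic ingredient is needed beyond those already developed.
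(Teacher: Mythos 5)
Your proposal is correct and follows exactly the route the paper takes: the paper's entire proof is the remark that the smooth argument of Sussmann (as in \cite{MR1668103}) carries over verbatim once one knows that flows of $\E_\M$ vector fields are $\E_\M$ (Theorem \ref{ClosednessODE}) and that the class is stable under composition and inversion (Theorem \ref{CMStability}). You have simply made explicit where each stability property enters, which is consistent with, and slightly more detailed than, what the paper records.
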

The proof of Theorem \ref{LocalSussman} is essentially the same as in the smooth setting, 
c.f.\ e.g.\ \cite{MR1668103}, due to Theorem \ref{ClosednessODE}.

The (unique) germ of the manifold $W$ will be denoted as the \emph{local Sussmann orbit} of $p_0$ relative to 
$\mathfrak{D}$. The local Sussman orbit does not depend on $\Omega$.

One of the main differences between the space of smooth functions and the space of 
real analytic functions is that in the smooth case there exist nontrivial test functions $\varphi\in\D(\Omega)$ whereas $\D\cap\O=\{0\}$.
Since the existence of functions of nontrivial test functions 
is equivalent to the existence
of nonzero flat functions, it makes sense to give the following definition in the ultradifferentiable setting.
\begin{Def}
	Let $E\subseteq\E(\Omega)$ be a subalgebra. We say that $E$ is quasianalytic iff 
	for $f\in E$ the fact that $D^\alpha f(p)=0$ for some $p\in\Omega$ and all $\alpha\in\N_0^n$ implies
	that $f\equiv 0$ in the connected component of $\Omega$ that contains $p$.
\end{Def}
In the case of Denjoy-Carleman classes quasianalyticity is characterized by the following theorem.
\begin{Thm}[Denjoy\cite{zbMATH02601219}-Carleman\cite{zbMATH02599917,zbMATH02598188}]
	The space $\E_\M(\Omega)$ is quasianalytic
	%, 
	%i.e.\ not a quasianalytic subalgebra of $\E(\Omega)$, 
	if and only if
	\begin{equation}\label{quasiCond}
	\sum_{k=1}^\infty\frac{m_{k-1}}{km_k}=\infty.
	\end{equation}
\end{Thm}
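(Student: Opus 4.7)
The plan is to split the proof into the two standard directions of the Denjoy--Carleman theorem.

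For the \emph{necessity} of \eqref{quasiCond}, I assume the series converges and construct a nonzero $f\in\E_\M(\R)$ with compact support; since such an $f$ has all derivatives vanishing at the boundary of its support, quasianalyticity fails. The classical construction proceeds by setting $a_k = m_{k-1}/(k m_k)$ (so $\sum a_k<\infty$ by hypothesis) and taking the inverse Fourier transform of the infinite product $\widehat f(\xi) = \prod_{k=1}^\infty \bigl(\sin(a_k\xi)/(a_k\xi)\bigr)$. Uniform convergence and the convolution structure give $\supp f\subseteq[-\sum a_k,\sum a_k]$, while the repeated differentiation $\partial^N f$ corresponds to multiplying $\widehat f$ by $\xi^N$; using \eqref{stlogconvex} to pair each factor of $\xi$ with one of the $\sin(a_k\xi)/(a_k\xi)$ one obtains exactly the estimate \eqref{DCdefiningEst}. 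Condition \eqref{analyticincl} guarantees $f$ is not real-analytic and hence not identically zero on its support.

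For the \emph{sufficiency} of \eqref{quasiCond}, suppose $f\in\E_\M(\Omega)$ and $\partial^\alpha f(p)=0$ for all $\alpha$. First reduce to the one-dimensional case: if $f$ vanishes to infinite order at $p$ along every line segment through $p$, then iterated application of Lemma~\ref{HadamardLemma} in each coordinate would give $f\equiv 0$ on a neighbourhood of $p$, so it suffices to prove: a function $g\in\E_\M(I)$ on an open interval $0\in I\subseteq\R$ with $g^{(k)}(0)=0$ for all $k$ must vanish near $0$. A standard open-closed argument on the connected component then upgrades local vanishing to vanishing on the whole component.

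For the one-variable statement I would use the Carleman--Ostrowski approach. Taylor's theorem with remainder gives $|g(x)|\le h^k m_k|x|^k$ for every $k$, hence
\begin{equation*}
|g(x)|\le \inf_{k\ge 0}h^k m_k |x|^k \le C\exp\bigl(-\omega_\M(1/(h|x|))\bigr),
\end{equation*}
where $\omega_\M(r)=\sup_k\log(r^k/m_k)$ is the associated function. The core analytic lemma, essentially due to Bang, is that $\int_1^\infty \omega_\M(r)\,r^{-2}\,dr = \infty$ is equivalent to the divergence \eqref{quasiCond} (equivalence uses \eqref{normalization} and \eqref{stlogconvex} via summation by parts on $m_{k-1}/(km_k)$). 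Combining the pointwise bound with this integral condition via a Phragm\'en--Lindel\"of argument applied to an almost-analytic extension of $g$ into a small sector of $\C$ forces $g\equiv 0$ on an interval $(-\delta,\delta)$.

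The main obstacle is the sufficiency direction, and within it the equivalence between the discrete series condition \eqref{quasiCond} and the integral condition $\int^\infty \omega_\M(r)r^{-2}dr = \infty$, together with the complex-analytic step that turns the Carleman-type estimate into actual vanishing. The latter requires either Phragm\'en--Lindel\"of on a sector (classical Carleman) or an almost-analytic extension argument in the spirit of Dyn'kin \cite{zbMATH03751341}; both are available for regular weight sequences, but the bookkeeping using \eqref{stlogconvex} and \eqref{derivclosed} is the delicate point.
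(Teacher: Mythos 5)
The paper does not prove this theorem: it is stated as the classical Denjoy--Carleman theorem and attributed directly to the cited papers of Denjoy and Carleman, so there is no in-paper argument to compare against. Judged on its own, your outline is the standard classical proof and is essentially sound. The necessity half (convergent series $\Rightarrow$ a nontrivial compactly supported element of $\E_\M$) is correct and nearly complete: with $a_k=m_{k-1}/(km_k)$ the telescoping product $\prod_{k=1}^N a_k^{-1}=N!\,m_N$ gives exactly \eqref{DCdefiningEst}, though two small points deserve care --- you need an extra factor (e.g.\ squaring the first term of the product) to make $\widehat f$ integrable so that the inverse Fourier transform is legitimate, and the nontriviality of $f$ follows simply from $\widehat f(0)=\prod_k 1=1$, i.e.\ $\int f=1$, not from condition \eqref{analyticincl}.

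The sufficiency half is where the real content lies, and your sketch correctly identifies the two hard steps but leaves both as black boxes. The equivalence of $\sum m_{k-1}/(km_k)=\infty$ with $\int_1^\infty\omega_\M(r)r^{-2}\,dr=\infty$ is indeed standard under \eqref{stlogconvex} (note $m_{k-1}/(km_k)=M_{k-1}/M_k$ for $M_k=k!m_k$, which is log-convex). The genuine soft spot is the final step: Phragm\'en--Lindel\"of in its classical form applies to \emph{holomorphic} functions, and an $\M$-almost analytic extension is not holomorphic, so you cannot invoke it directly; you would need the Levinson--Sj\"oberg type $\log\log$-theorem or Dyn'kin's $\dbar$-version, which is a nontrivial piece of machinery in its own right. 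Also, the pointwise decay $|g(x)|\le Ce^{-\omega_\M(1/(h|x|))}$ at the single point $0$ carries no information without such a complex-analytic input --- a smooth function can decay arbitrarily fast at a point without vanishing --- so the entire burden of the proof rests on that deferred step. The more elementary classical routes (Bang's combinatorial iteration on $\sup|g^{(k)}|$, or H\"ormander's argument via the logarithmic integral of the Fourier transform of a compactly supported function) avoid almost-analytic extensions altogether and would close this gap more cheaply. Finally, in the reduction to one variable, Lemma~\ref{HadamardLemma} is not the relevant tool; the standard reduction restricts $f$ to line segments through $p$ (an affine composition, so still of class $\{\M\}$) and then runs the open-closed argument, which is what you in effect describe.
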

We say that a weight sequence is quasianalytic if it satisfies \eqref{quasiCond}
and non-quasianalytic if not.
\begin{Ex}
	Let $\sigma>0$ be a parameter. We define a family $\mathcal{N}^\sigma$ of  weight sequences by
	\begin{equation*}
	n_k^\sigma=\bigl(\log(k+e )\bigr)^{\sigma k}.
	\end{equation*}
	The weight sequence $\mathcal{N}^\sigma$ is quasianalytic if and only if 
	$0<\sigma\leq 1$, see \cite{MR2384272}.
\end{Ex}
%We are going to close this section with a proof of a quasianalytic version of Nagano's theorem \cite{MR0199865}.

If $\M$ is a quasianalytic regular weight sequence then it is possible to show a quasianalytic version of Nagano's theorem \cite{MR0199865}, c.f.\ \cite{Fuerdoes1}.
As in the case of the ultradifferentiable version of Sussmann's theorem the proof is just a straightforward adaptation of the proof of the classical result, see e.g.\
\cite{MR1668103}.

%We denote also by $\mathfrak{X}_\M(\Omega)$ the Lie algebra of ultradifferentiable vector fields on 
%$\Omega$.
\begin{Thm}\label{NaganoThm}
Let  $U$ be an open neighbourhood of $p_0\in\R^n$ and $\M$ a quasianalytic regular weight sequence.
Furthermore let $\mathfrak{g}$ be a Lie subalgebra of $\mathfrak{X}_\M(U)$
that is also an $\E_\M$-module, i.e.\ if $X\in\mathfrak{g}$ and $f\in\E_\M(U)$ then $fX\in\mathfrak{g}$.

Then there exists an ultradifferentiable submanifold $W$ of class $\{\M\}$ in $U$, such that
\begin{equation}\label{Naganoeq}
T_pW=\mathfrak{g}(p)\qquad \forall p\in W.
\end{equation}
Moreover, the germ of $W$ at $p_0$ is uniquely defined by this property.
\end{Thm}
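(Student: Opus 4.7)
The plan is to mimic the classical real-analytic proof of Nagano's theorem, systematically replacing analytic inputs by their Denjoy--Carleman analogs provided by Theorems \ref{CMStability} and \ref{ClosednessODE}, with quasianalyticity of $\M$ taking the place of real-analytic continuation. First, I set $d:=\dim\mathfrak{g}(p_0)$ and choose vector fields $X_1,\dots,X_d\in\mathfrak{g}$ whose values at $p_0$ form a basis of $\mathfrak{g}(p_0)$. Theorem \ref{ClosednessODE} produces the local flows $\Phi^{X_j}_{t_j}$ as $\E_\M$-maps, and by the composition property in Theorem \ref{CMStability}(2) the map
\begin{equation*}
F(t_1,\dots,t_d):=\Phi^{X_1}_{t_1}\circ\cdots\circ\Phi^{X_d}_{t_d}(p_0)
\end{equation*}
is $\E_\M$ near $0\in\R^d$. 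Since $dF|_0\,\partial_{t_j}=X_j(p_0)$ and these span $\mathfrak{g}(p_0)$, Theorem \ref{CMStability}(3) upgrades $F$ to an $\E_\M$-immersion onto an ultradifferentiable submanifold $W:=F(V)$ of class $\{\M\}$ of dimension $d$ through $p_0$.

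Next I would prove the identification $T_qW=\mathfrak{g}(q)$ for $q\in W$ near $p_0$, which is the core of the statement. The key lemma, familiar from the analytic proof, reads: for any $Y,X\in\mathfrak{g}$ and any point $q$ in the domain of $\Phi^Y$, the evaluation $((\Phi^Y_s)_\ast X)(q)$ lies in $\mathfrak{g}(q)$ for $s$ small. To establish it, observe that this is an $\E_\M$-function of $s$ valued in $T_q\R^n$ whose Taylor coefficients at $s=0$ are (up to signs) the iterated brackets $\ad(Y)^kX(q)$, all in $\mathfrak{g}(q)$ because $\mathfrak{g}$ is a Lie subalgebra; projecting onto any linear complement of $\mathfrak{g}(q)$ in $T_q\R^n$ then produces an $\E_\M$-function with flat Taylor series at the origin, which vanishes by quasianalyticity of $\M$. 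Iterating this observation along the composition defining $F$, each $\partial_{t_j}F(t)$ is seen to lie in $\mathfrak{g}(F(t))$, whence $T_qW\subseteq\mathfrak{g}(q)$. For the converse inclusion, fix $X\in\mathfrak{g}$ and consider the wedge $\omega(t):=X(F(t))\wedge\partial_{t_1}F(t)\wedge\cdots\wedge\partial_{t_d}F(t)$, an $\E_\M$-function of $t$ valued in a finite-dimensional exterior power; it vanishes at $t=0$ since $X(p_0)\in\mathfrak{g}(p_0)=\spanc\{X_j(p_0)\}$, and expanding its higher derivatives via the Lie bracket structure of $\mathfrak{g}$ shows that all its Taylor coefficients at the origin also vanish. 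Quasianalyticity then yields $\omega\equiv0$ near $0$, placing $X(q)$ in the span of the $\partial_{t_j}F(t)$, i.e.\ in $T_qW$. A dimension count combined with the lower semicontinuity of $q\mapsto\dim\mathfrak{g}(q)$ closes the identification.

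Finally, uniqueness of the germ of $W$ follows from Theorem \ref{LocalSussman}: any ultradifferentiable submanifold $W'$ of class $\{\M\}$ satisfying \eqref{Naganoeq} has every element of $\mathfrak{g}$ tangent to it, hence contains the local Sussmann orbit of $\mathfrak{g}$ at $p_0$, which by the preceding identification coincides with the germ of $W$. The main obstacle lies in the Taylor/quasianalyticity step: in the merely smooth setting, vector fields with flat components can enlarge $\mathfrak{g}(q)$ arbitrarily close to $p_0$, violating \eqref{Naganoeq}, and the condition \eqref{quasiCond} on $\M$ is precisely what forbids such flat phenomena and makes the quasianalytic Nagano theorem valid.
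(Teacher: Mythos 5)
The paper does not actually write out a proof of this theorem; it only remarks that the argument is a straightforward adaptation of the classical real-analytic proof in \cite{MR1668103}, deferring details to \cite{Fuerdoes1}. Your reconstruction is precisely that adaptation, and most of it is sound: the flows are $\E_\M$ by Theorem \ref{ClosednessODE}, the map $F$ is an $\E_\M$-immersion by Theorem \ref{CMStability}, and your key lemma is the correct replacement for analytic continuation --- the $s$-derivatives of $s\mapsto((\Phi^Y_s)_\ast X)(q)$ at $s=0$ are $(-1)^k\ad(Y)^kX(q)\in\mathfrak{g}(q)$, so the projection onto a complement of $\mathfrak{g}(q)$ is an $\E_\M$-function of one variable that is flat at $0$ and hence vanishes by quasianalyticity. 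Iterating this (note that after the first pushforward one is transporting a single vector in $\mathfrak{g}(q')$, which one writes as a combination $\sum c_iZ_i(q')$ with $Z_i\in\mathfrak{g}$ before applying the lemma again) gives $T_qW\subseteq\mathfrak{g}(q)$, and the uniqueness argument via Theorem \ref{LocalSussman} is fine.

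The step I cannot accept as written is the reverse inclusion via the wedge $\omega(t)=X(F(t))\wedge\partial_{t_1}F(t)\wedge\cdots\wedge\partial_{t_d}F(t)$. The assertion that ``expanding its higher derivatives via the Lie bracket structure'' makes all Taylor coefficients of $\omega$ vanish at the origin is not a computation that goes through: differentiating $X\circ F$ in $t_j$ produces $DX\cdot\partial_{t_j}F$, and $\partial_{t_j}F$ is a pushed-forward, $t$-dependent field rather than an element of $\mathfrak{g}$, so these terms are not iterated brackets and their membership in the right span at $t=0$ is essentially what you are trying to prove. The appeal to lower semicontinuity of $q\mapsto\dim\mathfrak{g}(q)$ also points the wrong way, since it only yields $\dim\mathfrak{g}(q)\geq d$. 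Fortunately the whole step is redundant: applying your key lemma to both $\Phi^Y_s$ and $\Phi^Y_{-s}$ gives $\mathfrak{g}(\Phi^Y_s(q))\subseteq(\Phi^Y_s)_\ast\mathfrak{g}(q)$ together with the reverse inclusion, hence $\dim\mathfrak{g}$ is constant along flow lines of elements of $\mathfrak{g}$ and therefore equal to $d$ at every point of $W=F(V)$. Since $T_qW\subseteq\mathfrak{g}(q)$ and $\dim T_qW=d=\dim\mathfrak{g}(q)$, the inclusion is an equality. With that substitution the proof is complete and coincides with the classical argument the paper invokes.
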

As in the analytic category, c.f.\ \cite{MR1668103}, we have the following result.
\begin{Cor}\label{NaganoSussman}
	Let $\M$ be quasianalytic and $\mathfrak{D}\subseteq\mathfrak{X}_\M(\Omega)$ a collection of ultradifferentiable vector fields.
	If $\mathfrak{g}=\mathfrak{g}_\mathfrak{D}$ is the Lie algebra generated by $\mathfrak{D}$ and $p_0\in\Omega$ then the local Sussman orbit of $p_0$, relative to $\mathfrak{D}$, coincides with the
	local Nagano leaf of $\mathfrak{g}$.
\end{Cor}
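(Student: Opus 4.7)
The plan is to show that the local Sussmann orbit $S$ of $p_0$ relative to $\mathfrak{D}$ and the local Nagano leaf $N$ of $\mathfrak{g}$ coincide as germs at $p_0$ by proving both the inclusion $S\subseteq N$ (near $p_0$) and the equality $\dim T_{p_0}S=\dim T_{p_0}N$. Here $\mathfrak{g}=\mathfrak{g}_\mathfrak{D}$ is understood as the smallest Lie $\E_\M$-submodule of $\mathfrak{X}_\M(\Omega)$ containing $\mathfrak{D}$, which is precisely the kind of object to which Theorem \ref{NaganoThm} applies (so quasianalyticity of $\M$ is used precisely to guarantee that $N$ exists in the first place).

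For the inclusion $S\subseteq N$, observe that since $\mathfrak{D}\subseteq\mathfrak{g}$ and by \eqref{Naganoeq} every element of $\mathfrak{g}$ is tangent to $N$ at each point of $N$, every vector field in $\mathfrak{D}$ is tangent to $N$ at every point of $N$. Thus $N$ is an ultradifferentiable submanifold of class $\{\M\}$ through $p_0$ to which $\mathfrak{D}$ is tangent, and the minimality property (1) in Theorem \ref{LocalSussman} yields a neighbourhood $V$ of $p_0$ with $S\cap V\subseteq N\cap V$.

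For the reverse dimension bound, let $\mathfrak{h}\subseteq\mathfrak{X}_\M(\Omega)$ denote the set of ultradifferentiable vector fields tangent to $S$ at every point of $S$. Multiplying a vector field tangent to $S$ by an element of $\E_\M(\Omega)$ preserves tangency, so $\mathfrak{h}$ is an $\E_\M$-module; and a standard computation (essentially Leibniz applied to defining functions of $S$, which belong to $\E_\M$ since $S$ is an ultradifferentiable submanifold) shows that the Lie bracket of two vector fields tangent to $S$ is again tangent to $S$ along $S$. Hence $\mathfrak{h}$ is a Lie $\E_\M$-submodule of $\mathfrak{X}_\M(\Omega)$ containing $\mathfrak{D}$, and by the minimality characterisation of $\mathfrak{g}_\mathfrak{D}$ we obtain $\mathfrak{g}\subseteq\mathfrak{h}$. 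Evaluating at $p_0$ gives $\mathfrak{g}(p_0)\subseteq T_{p_0}S$, while $S\subseteq N$ combined with \eqref{Naganoeq} gives $T_{p_0}S\subseteq T_{p_0}N=\mathfrak{g}(p_0)$. Therefore $T_{p_0}S=T_{p_0}N$, the two submanifolds have the same dimension at $p_0$, and the already established inclusion $S\subseteq N$ becomes an equality of germs at $p_0$.

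The only mildly subtle point is the verification that $\mathfrak{h}$ is closed under the Lie bracket: the vector fields in $\mathfrak{D}$ are defined on all of $\Omega$, whereas tangency is asked only along the lower-dimensional submanifold $S$. This is the classical argument that if $X,Y$ are smooth (here ultradifferentiable) vector fields tangent to a submanifold along that submanifold, then $[X,Y]$ is tangent to it as well; its proof transfers to the class $\{\M\}$ without change because by Theorem \ref{CMStability} the class $\E_\M$ is closed under products, composition and solving the defining equations that describe $S$ locally. Once this point is handled, the corollary follows from the two-inclusion argument above purely formally.
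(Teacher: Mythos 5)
Your argument is correct and follows essentially the same route as the paper: the inclusion $S\subseteq N$ via the minimality clause of Theorem \ref{LocalSussman}, and the reverse tangency $\mathfrak{g}(p)\subseteq T_pS$ coming from the fact that the ultradifferentiable vector fields tangent to $S$ form a Lie $\E_\M$-submodule containing $\mathfrak{D}$ (a point the paper leaves implicit and you spell out). The only cosmetic difference is the finish: you conclude germ equality from $\dim T_{p_0}S=\dim T_{p_0}N$ together with $S\subseteq N$, whereas the paper observes $T_pW_S=\mathfrak{g}(p)$ along $W_S\cap V$ and invokes the uniqueness part of Theorem \ref{NaganoThm}; both are valid.
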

\begin{proof}
	Let $W_N$ be a representative of the local Nagano leaf of $\mathfrak{g}$ at $p_0$ and $W_S$ 
	a representative of the local Sussman orbit of $p_0$, relative to $\mathfrak{D}$.
	By Theorem \ref{LocalSussman} (1) there exists an open neighbourhood $V$ of $p_0$ such that
	$W_S\cap V\subseteq W_N\cap V$. On the other hand $\mathfrak{g}(p)=T_pW_N$ for all $p\in W_N$ and
	$\mathfrak{g}(p)\subseteq T_pW_S$ at every $p\in W_S$, hence $\mathfrak{g}(p)=T_p W_S$ for 
	$p\in W_S\cap V$. The uniqueness part of Theorem \ref{NaganoThm} gives the equality of the local Nagano leaf and the local Sussman orbit.
\end{proof}
We want to close this section by showing how  the results pertaining the division of smooth functions in 
\cite[section 4]{MR3593674} transfer to the category of ultradifferentiable functions of class $\{\M\}$.
This is possible because these classes are closed under division by a coordinate, 
i.e.\ Lemma \ref{HadamardLemma}.
\begin{Lem}
	Let $\lambda$ be  an ultradifferentiable function of class $\{\M\}$ defined near $0\in\R$ 
	that is non-flat at the origin, i.e.\
	there is a positive integer $k\in\N$ such that $\lambda^{(j)}(0)=0$ for all integers $0\leq j\leq k-1$ and 
	$\lambda^{(k)}(0)\neq 0$. 
	Further assume that there is a locally integrable function $u$ defined near $0$ such that
	the product $f=\lambda u$ is of class $\{\M\}$ in some neighbourhood of the origin.
	
	Then $u$ is ultradifferentiable of class $\{\M\}$ near the origin.
\end{Lem}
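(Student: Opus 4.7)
The plan is to extract a common factor of $x^k$ from both $\lambda$ and $f=\lambda u$, so that on $\{x\neq 0\}$ the division trivializes, and then invoke the inverse closedness of $\E_\M$ from Theorem \ref{CMStability}(1).

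First, iterating the one-dimensional version of Lemma \ref{HadamardLemma} $k$ times, using that $\lambda^{(j)}(0)=0$ for $0\leq j\leq k-1$, yields a factorization $\lambda(x)=x^k g(x)$ with $g\in\E_\M$ near the origin. A direct Leibniz-rule computation gives $g(0)=\lambda^{(k)}(0)/k!\neq 0$, so $g$ is nonvanishing on some open interval $I$ containing $0$.

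Second, I show that $f$ must vanish to order at least $k$ at the origin. On $I\setminus\{0\}$ we have $u(x)=f(x)/\lambda(x)$, and from $|\lambda(x)|\leq C|x|^k$ near $0$, the local integrability of $u$ forces $|f(x)|/|x|^k$ to be locally integrable. Taylor expanding the smooth function $f$ at $0$, if the lowest nonzero Taylor coefficient occurred at some order $j<k$, then $|f(x)|/|x|^k$ would behave like $c\,|x|^{j-k}$ with $j-k\leq-1$, which is not integrable near $0$; hence $f^{(j)}(0)=0$ for all $0\leq j\leq k-1$ (the case of $f$ flat at $0$ being trivially included). A further $k$-fold application of Lemma \ref{HadamardLemma} now yields $f(x)=x^k h(x)$ with $h\in\E_\M$ near $0$.

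Finally, on $I\setminus\{0\}$ we have the pointwise identity $u(x)=h(x)/g(x)$. By inverse closedness of $\E_\M$ together with closure under multiplication, $h/g\in\E_\M(I)$. Since $u$ and $h/g$ are both locally integrable and agree almost everywhere, they define the same class of locally integrable functions, so $u$ admits a representative of class $\{\M\}$ near $0$. The main technical obstacle is the integrability argument in the second step, which converts the \emph{a priori} weak regularity of $u$ into the pointwise vanishing of $f$ up to order $k-1$; once that is in place, the rest is a routine manipulation using the stability properties of $\E_\M$ already established.
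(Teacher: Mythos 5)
Your proof is correct and follows essentially the same route as the paper: factor $x^k$ out of $\lambda$ via iterated application of Lemma \ref{HadamardLemma}, use the local integrability of $u$ to force $f$ to vanish to order $k$ at the origin, factor $x^k$ out of $f$ likewise, and conclude by inverse closedness. The only cosmetic difference is that you establish the order-$k$ vanishing of $f$ in one step by a Taylor expansion and integrability estimate, whereas the paper peels off one factor of $x$ at a time, checking at each stage that the new factor vanishes at $0$.
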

\begin{proof}
	First, we note that the zero of $\lambda$ at $0$ is isolated. Therefore we restrict ourselves to an open interval $I$ 
	that contains the origin and such that $0$ is the only zero of $\lambda$ on $I$. Iterating Lemma \ref{HadamardLemma} we see that there is a function $\tilde{\lambda}$ of class $\{\M\}$
	defined near $0$ such that 
	$\tilde{\lambda}(0)\neq 0$ and
	\begin{equation*}
	\lambda(x)=x^k\tilde{\lambda}(x).
	\end{equation*}
	In order to proceed we want a similar decomposition of $f$. But,  since we are not able to say anything apriori
	about the
	values of the derivatives of $f$ at the origin, we can only find an ultradifferentiable function $f_1$ such that
	\begin{equation*}
	f(x)=xf_1(x)
	\end{equation*}
	in a neighbourhood of $0$. If $k>1$ then we would have that
	\begin{equation*}
	u(x)=x^{1-k}\frac{f_1(x)}{\tilde{\lambda}(x)}
	\end{equation*}
	in a punctured neighbourhood of $0$. Hence, if $f_1(0)\neq 0$ then $u\sim x^{1-k}$ for $x\rightarrow 0$. 
	This is a contradiction to the assumption that $u$ is locally integrable. 
	Therefore $f_1(0)=0$ and there has to be a function $f_2$ of class $\{\M\}$ such that
	$f(x)=x^2f_2(x)$ near $0$. Repeating this argument if necessary, we obtain that there is a function $f_k$
	ultradifferentiable of class $\{\M\}$ defined near the origin such that
	\begin{equation*}
	f(x)=x^kf_k(x).
	\end{equation*}
	
	It follows that
	\begin{equation*}
	u(x)=\frac{f_k(x)}{\tilde{\lambda}(x)}
	\end{equation*}
	in some neighbourhood of $0$.
\end{proof}
\begin{Prop}\label{DivProp}
	Let $p_0\in\R^n$ and $\lambda$  an ultradifferentiable function of class 
	$\{\M\}$ defined in a neighbourhood of $p_0$ and $\lambda(p_0)=0$.
	Suppose that $\lambda^{-1}(0)$ 
	is a hypersurface of class $\{\M\}$ near $p_0$ and that  there are $v\in\R^n$ and 
	$k\in\N$ such that $\partial_v^j(p)= 0$ for $0\leq j<k$ and 
	$\partial^k_v(p)\neq 0$ for all $p\in\lambda^{-1}(0)\cap U$ where $U$ is a neighbourhood of $p_0$.
	
	If $u$ is a locally integrable function defined near the origin in $\R^n$ such 
	that $\lambda\cdot u=f$ is ultradifferentiable of class $\{\M\}$ near $p_0$ then
	$u$ has also to be of class $\{\M\}$ in some neighbourhood of $p_0$.
\end{Prop}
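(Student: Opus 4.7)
The plan is to reduce the problem to the preceding lemma applied with parameters, by constructing an $\E_\M$-change of coordinates that simultaneously straightens $\lambda^{-1}(0)$ and rectifies $v$. A preliminary observation is that $v$ must be transverse to $\lambda^{-1}(0)$ at $p_0$: in any local $\E_\M$-coordinates flattening the hypersurface to $\{x_n=0\}$, iterated application of Lemma \ref{HadamardLemma} writes $\lambda=x_n^m\mu$ with $\mu(p_0)\neq 0$, and a brief computation shows that $\partial_v^k\lambda(p_0)=0$ for every $k$ whenever $v$ has vanishing $x_n$-component at $p_0$, contradicting the hypothesis.

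Given transversality, pick a local $\E_\M$-parametrization $\iota:U'\subseteq\R^{n-1}\to\R^n$ of $\lambda^{-1}(0)$ with $\iota(0)=p_0$, and let $\Phi_t^v$ denote the local flow of $v$, which is $\E_\M$ by Theorem \ref{ClosednessODE}. Setting $\Psi(y',t):=\Phi_t^v(\iota(y'))$ produces an $\E_\M$-map whose differential at $(0,0)$ has columns $\partial_{y_1}\iota(0),\dots,\partial_{y_{n-1}}\iota(0),v(p_0)$ and is therefore invertible by transversality; Theorem \ref{CMStability}(3) then makes $\Psi$ an $\E_\M$-diffeomorphism onto a neighborhood of $p_0$. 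In the new coordinates $\lambda^{-1}(0)=\{t=0\}$ and $v=\partial_t$, so the hypothesis reads $\partial_t^j\lambda(y',0)=0$ for $0\leq j<k$ and $\partial_t^k\lambda(y',0)\neq 0$, and $k$-fold use of Lemma \ref{HadamardLemma} in the $t$-variable yields
$$\lambda(y',t)=t^k\tilde\lambda(y',t)$$
with $\tilde\lambda\in\E_\M$ and $\tilde\lambda(y',0)=\partial_t^k\lambda(y',0)/k!\neq 0$ in a neighborhood of the origin. Inverse closedness (Theorem \ref{CMStability}(1)) then makes $g:=f/\tilde\lambda$ a function of class $\{\M\}$, and $\lambda u=f$ becomes $t^k u(y',t)=g(y',t)$.

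To conclude, I would show that $g$ is divisible by $t^k$ inside $\E_\M$. If $g(y_0',0)\neq 0$ for some $y_0'$, continuity yields $|g|\geq\alpha>0$ on a small cylinder $V'\times(-\delta,\delta)$, whence $|u(y',t)|\geq\alpha/(2t^k)$ there, contradicting local integrability of $u$ since $k\geq 1$. So $g(y',0)\equiv 0$ in a neighborhood of the origin, and Lemma \ref{HadamardLemma} produces $g=tg_1$ with $g_1\in\E_\M$; the same integrability obstruction rules out $g_j(y',0)\not\equiv 0$ as long as the remaining relation $t^{k-j}u=g_j$ has $k-j\geq 1$, so $k$ successive peelings yield $g=t^kh$ with $h\in\E_\M$. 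Then $u=h$ almost everywhere, and $u$ is of class $\{\M\}$ near $p_0$ as required. The main obstacle is the straightening step, which depends critically on $\E_\M$ being closed under flows (Theorem \ref{ClosednessODE}) and on the inverse function theorem holding in $\E_\M$; once coordinates are in place, the division argument is a routine parametric version of the preceding lemma.
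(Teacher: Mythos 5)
Your proof follows the same route as the paper's: straighten $\lambda^{-1}(0)$, factor $\lambda=t^k\tilde\lambda$ with $\tilde\lambda$ nonvanishing by iterating Lemma \ref{HadamardLemma}, and use local integrability of $u$ to peel $k$ factors of $t$ off $f$ (you peel them off $f/\tilde\lambda$ instead, which amounts to the same thing). Your explicit construction of the adapted coordinates $\Psi(y',t)=\iota(y')+tv$ is in fact more careful than the paper, which merely asserts the existence of suitable coordinates: your version guarantees that $\partial_t$ corresponds to $\partial_v$, so the hypothesis transfers verbatim to $\partial_t^j\lambda(y',0)=0$ for $j<k$ on the whole slice $\{t=0\}$, which is exactly what the iterated Hadamard division needs.

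However, your justification of the preliminary transversality claim is wrong as stated. It is not true that $\partial_v^j\lambda(p_0)=0$ for all $j$ whenever $v$ is tangent to $\lambda^{-1}(0)$ at $p_0$: take $\lambda(x_1,x_2)=x_2-x_1^2$ and $v=e_1$, which is tangent to the parabola at the origin, yet $\partial_v^2\lambda(0)=-2\neq 0$. (Moreover, iterating Lemma \ref{HadamardLemma} to reach $\lambda=x_n^m\mu$ with $\mu(p_0)\neq 0$ requires each successive quotient to vanish on the whole hyperplane $\{x_n=0\}$, not merely at $p_0$, so that decomposition is not available ``in any flattening coordinates'' without further input.) The correct argument must use the hypothesis at \emph{all} points of $\Sigma:=\lambda^{-1}(0)\cap U$, not only at $p_0$: the function $\mu:=\partial_v^{k-1}\lambda$ vanishes identically on $\Sigma$, hence $d\mu(p)$ annihilates $T_p\Sigma$ for every $p\in\Sigma$, while $d\mu(p)(v)=\partial_v^{k}\lambda(p)\neq 0$; therefore $v\notin T_p\Sigma$ at every point of $\Sigma$ near $p_0$. (In the parabola example this uniform hypothesis fails, since $\partial_v\lambda=-2x_1$ does not vanish at the other points of the curve, so no single $k$ works.) With this repair the remainder of your argument is sound and coincides with the paper's.
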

\begin{proof}
	We can choose ultradifferentiable coordinates $(x_1,\dotsc,x_{n-1},x_n)=(x^\prime,x_n)$
	in a neighbourhood $V$ of $p_0$ in $\R^n$ such that 
	$p_0=0$, $\lambda^{-1}(0)\cap V=\{(x^\prime,x_n)\in V\mid x_n=0\}$ and
	\begin{align*}
	\frac{\partial^j \lambda}{\partial x^j_n}(0)&=0,\qquad 0\leq j<k,\\
	\frac{\partial^k \lambda}{\partial x^k_n}(0)&\neq 0.
	\end{align*}
	
	Similarly to above, using Lemma \ref{HadamardLemma} we conclude, if we shrink $V$, 
	that there is $\tilde{\lambda}\in\E_\M(V)$ with the following properties:
	$\tilde{\lambda}(x)\neq 0$ and $\lambda(x)=x_n^k\tilde{\lambda}(x)$ for all points
	$x\in V$.
	There is also a Denjoy-Carleman function $f_1$ on $V$ such that $f(x^\prime,x_n)=x_nf_1(x^\prime,x_n)$. 
	We want to show, as in the $1$-dimensional case, that $f_1(x^\prime,0)=0$ for $(x^\prime,0)\in V$ if $k>1$: 
	Suppose that there exists some $y\in\R^{n-1}$ with $(y,0)\in V$ and $f_1(y,0)\neq 0$.
	Then there is a neighbourhood $W$ of $(y,0)$ such that $f_1(x)\neq 0$ and also 
	$\tilde{\lambda}(x)\neq 0$ for $x\in W$. 
	W.l.o.g.\ the open set $W$ is of the form $W=W^\prime\times I\subseteq\R^{n-1}\times\R$ and set
	\begin{equation*}
	F(x_n):=\int_{W^\prime}\biggl\lvert\frac{f_1}{\tilde{\lambda}}(x)\biggr\rvert\,dx
	\end{equation*}
	for $x_n\in I$. We conclude that 
	\begin{equation*}
	\int_W\!\lvert u(x)\rvert\,dx=\int_I\lvert x_n\rvert^{1-k}F(x_n)\,dx=\infty
	\end{equation*}
	and hence $u$ cannot be locally integrable near $(y,0)$ which contradicts our assumption.
	Therefore we obtain by iteration a function $\tilde{f}$ of class $\{\M\}$ defined near the origin in $\R^n$ 
	such that $f(x^\prime,x_n)=x_n^k\tilde{f}(x^\prime,x_n)$.
	Hence $u=\tilde{f}/\tilde{\lambda}$ is also of class $\{\M\}$ in a neighbourhood of $0$.
\end{proof}
\begin{Cor}\label{DivCor}
	Let $U\subseteq\R^n$ a neighbourhood of $0$, $\lambda\in\E_\M(U)$ and suppose that $\lambda$ is of the form
	$\lambda (x)=x^\alpha \tilde{\lambda}(x)$ where $\alpha\in\N_0^n$ and $\tilde{\lambda}\in\E_\M(U)$ with
	$\tilde{\lambda}(0)\neq 0$.
	
	If $u$ is a locally integrable function near $0$ with the property that the product $f:=\lambda\cdot u$ is of
	class $\{\M\}$ near the origin, then $u$ is also ultradifferentiable near $0$.
\end{Cor}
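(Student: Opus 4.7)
The natural plan is to deduce the corollary from Proposition \ref{DivProp} by induction on the length $|\alpha|$ of the multi-index, peeling off one coordinate factor at a time.

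For the base case $|\alpha|=0$, the function $\lambda=\tilde\lambda$ is non-vanishing at $0$ and hence in a whole neighbourhood of $0$, so by the inverse-closedness of $\E_\M$ (Theorem \ref{CMStability}(1)) the function $f/\tilde\lambda$ is of class $\{\M\}$. Since $u$ agrees with $f/\tilde\lambda$ almost everywhere, $u$ has an ultradifferentiable representative near $0$.

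For the inductive step with $|\alpha|\geq 1$, pick any index $j$ with $\alpha_j>0$ and write $\alpha=\alpha'+e_j$, so that
\begin{equation*}
\lambda(x)=x_j\cdot\mu(x),\qquad \mu(x):=x^{\alpha'}\tilde\lambda(x)\in\E_\M(U).
\end{equation*}
Set $v:=\mu\cdot u$; since $\mu$ is continuous and $u$ is locally integrable, $v$ is locally integrable near $0$, and $x_j\cdot v=f$ almost everywhere. Now I would apply Proposition \ref{DivProp} to the ultradifferentiable function $\lambda_0(x)=x_j$: its zero set $\{x_j=0\}$ is an $\{\M\}$-hypersurface, and with the direction vector $e_j$ and $k=1$ one has $\partial_{e_j}\lambda_0\equiv 1\neq 0$ everywhere on this hypersurface. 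The proposition therefore yields $v\in\E_\M$ in a neighbourhood of $0$.

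At this point the equation $\mu u=v$ with $\mu(x)=x^{\alpha'}\tilde\lambda(x)$, $\tilde\lambda(0)\neq 0$ and $|\alpha'|=|\alpha|-1$ matches the hypothesis of the corollary, and the inductive hypothesis gives $u\in\E_\M$ near $0$. I do not anticipate a significant obstacle here; the only thing to be mindful of is that Proposition \ref{DivProp} is only stated for a single hypersurface, which is exactly why the reduction has to be carried out one coordinate at a time rather than attempted in one shot with the full zero set $\{x^\alpha=0\}$, which is a union of hyperplanes.
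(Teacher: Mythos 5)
Your proof is correct and follows essentially the same route as the paper: both reduce to repeated applications of Proposition \ref{DivProp} by stripping the monomial $x^\alpha$ coordinate-wise. The only cosmetic differences are that the paper removes each whole block $x_j^{\alpha_j}$ in a single application (with $k=\alpha_j$) after first dividing out $\tilde\lambda$, whereas you peel off one linear factor $x_j$ at a time (always with $k=1$) and handle $\tilde\lambda$ in the base case; both variants are equally valid.
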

\begin{proof}
	Note first that, if $\alpha=\alpha_je_j$ then the statement is just Proposition \ref{DivProp}. In the general case
	we argue as follows: Set $\tilde{f}=f/\tilde{\lambda}$ and 
	\begin{equation*}
	u_k(x)=\prod_{j=k+1}^n x_j^{\alpha_j} u(x)
	\end{equation*}
	for all $k\in\{1,\dotsc,n-1\}$.
	The function $\tilde{f}$ is of class $\{\M\}$ whereas the functions $u_k$ are locally integrable near $0$. 
	Furthermore we define $u_{n}=u$ and obtain
	\begin{align*}
	x_1^{\alpha_1}u_1(x)&=\tilde{f}(x)\\
	x_{k+1}^{\alpha_{k+1}}u_{k+1}(x)&=u_{k}(x)\qquad 1\leq k\leq n-1.
	\end{align*}
	Hence repeated application of Proposition \ref{DivProp} finishes the proof.
\end{proof}
In the literature the focus regarding questions of divisibility of functions seems to be more on the problem
if it is possible to show that functions that are formally divisible, i.e.\ their Taylor series are divisible, are
actually divisible.
Indeed, the  Weierstrass division theorem for example implies that two real-analytic functions that 
are formally divisible are also divisible as functions.

However, the equivalent of the Weierstrass division theorem does not hold for general quasianalytic 
Denjoy-Carleman classes \cite{MR3190431},\cite{MR3239125}, c.f.\ also \cite{MR2380000}.
In general the algebraic structure of quasianalytic Denjoy-Carleman classes is far more complicated than
that of the space of real-analytic functions, c.f.\ the survey \cite{MR2384272}.

Despite this there are some positive results known for quasianalytic regular classes, 
e.g.\ \cite{MR2061220} showed that certain desingularization theorems hold in these classes
whereas \cite{MR1992825} proved that quasianalytic regular Denjoy-Carleman classes define o-minimal structures. 
Both of these approaches can be used to prove division theorems. 
Especially the following result was shown by \cite{MR3158580}.
\begin{Thm}\label{FormalDiv}
	Let $p\in\R^n$, $\M$ quasianalytic and $f,g\in\E_\M$ are defined near $p$ with power series expansions $\hat{f}$ and $\hat{g}$ at $p$. 
	If $\hat{f}\in\hat{g}\cdot \C[[x]]$ then $f\in g\cdot\E_\M$ near $p$.
\end{Thm}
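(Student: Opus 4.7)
The plan is to reduce Theorem \ref{FormalDiv} to the monomial case already handled by Corollary \ref{DivCor}, via the desingularization theory for regular quasianalytic Denjoy--Carleman classes of Bierstone--Milman \cite{MR2061220}. Translating, I may assume $p=0$. Resolution of singularities applied to the zero set of $g$ produces a proper $\E_\M$-mapping $\sigma\colon \tilde U \to U$, given as a composition of local $\E_\M$-blow-ups with smooth centres, such that around every point $\tilde p$ of the exceptional fibre $\sigma^{-1}(0)$ there are $\E_\M$-coordinates $y=(y_1,\dots,y_n)$ in which
\begin{equation*}
(g\circ\sigma)(y) = y^\alpha \tilde\lambda(y), \qquad \tilde\lambda(0) \neq 0,
\end{equation*}
for some $\alpha\in\N_0^n$.

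Next I would transport the formal divisibility through $\sigma$: since Taylor expansion commutes with composition of germs, $\widehat{f\circ\sigma}_{\tilde p}$ is divisible by $\widehat{g\circ\sigma}_{\tilde p}$, and hence by the monomial $y^\alpha$, in $\C[[y]]$. The key move is to combine this formal divisibility with quasianalyticity and iterated use of Lemma \ref{HadamardLemma}. Formal divisibility by $y_1^{\alpha_1}$ forces the restriction of $f\circ\sigma$ to $\{y_1=0\}$ to have identically vanishing Taylor series at $\tilde p$; since $\E_\M$ restricted to this hypersurface is again quasianalytic (same weight sequence), the restriction vanishes on a neighbourhood of $\tilde p$. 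Lemma \ref{HadamardLemma} then yields $f\circ\sigma = y_1 f_1$ with $f_1\in\E_\M$, whose Taylor series is divisible by $y_1^{\alpha_1-1}y_2^{\alpha_2}\cdots y_n^{\alpha_n}$, so iteration over all coordinates delivers $f\circ\sigma = y^\alpha\tilde f$ with $\tilde f\in\E_\M$. Setting $\tilde u := \tilde f/\tilde\lambda$, which is of class $\{\M\}$ by inverse-closedness (Theorem \ref{CMStability}), gives $f\circ\sigma = (g\circ\sigma)\tilde u$ near $\tilde p$.

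On the overlap of two such local pieces, the two candidate quotients $\tilde u$ and $\tilde u'$ coincide on the complement of $\{g\circ\sigma=0\}$, where both equal $(f/g)\circ\sigma$; by quasianalyticity they agree on the common domain. This glues to a single $\tilde u\in\E_\M(\tilde U')$ on a neighbourhood $\tilde U'$ of $\sigma^{-1}(0)$ satisfying $f\circ\sigma = (g\circ\sigma)\tilde u$. Since $\tilde u = (f/g)\circ\sigma$ on a dense open set and $\sigma$ has connected fibres, $\tilde u$ is fibre-constant over $\sigma$ and descends to a continuous function $u$ on a neighbourhood of $0\in U$ with $u\circ\sigma = \tilde u$ and $f = g u$.

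The main obstacle is the final step: promoting this continuous $u$, which is a priori only of class $\{\M\}$ off $\{g=0\}$, to an element of $\E_\M$ at $0$. Because $\sigma$ is not a submersion along the exceptional locus, the $\E_\M$-regularity of $u\circ\sigma$ does not transfer to $u$ by elementary means; what is needed is a genuine descent theorem for $\E_\M$-functions along proper $\E_\M$-modifications. This is the technical heart of \cite{MR3158580} and invokes characteristically quasianalytic tools --- either the o-minimal structure of regular quasianalytic Denjoy--Carleman classes of \cite{MR1992825} or a further application of the desingularization machinery of \cite{MR2061220} to the graph of $u$. Coupled with the fact that the Taylor expansion of $u$ at $0$ is forced to equal the formal quotient $\hat f/\hat g$, quasianalyticity then identifies $u$ as the unique $\E_\M$-function realising this formal quotient, completing the factorisation $f = gu$ in $\E_\M$.
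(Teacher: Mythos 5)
The paper itself does not prove Theorem \ref{FormalDiv}: it is quoted as a known result from \cite{MR3158580} and used as a black box, so there is no internal proof to compare your argument against. That said, your reconstruction does follow the strategy of the cited literature --- quasianalytic desingularization to monomialize $g$, transport of formal divisibility through the modification, and iterated use of Lemma \ref{HadamardLemma} together with quasianalyticity of restrictions to turn formal divisibility by $y^\alpha$ into actual divisibility upstairs --- and those intermediate steps are carried out correctly.

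As a proof, however, the proposal has two genuine gaps. The smaller one is the fibre-constancy claim: a continuous function on $\tilde U$ that agrees with $(f/g)\circ\sigma$ on the dense set $\{g\circ\sigma\neq 0\}$ need \emph{not} be constant on positive-dimensional fibres; for the blow-up of $0\in\R^2$ and $v=xy/(x^2+y^2)$ the pullback extends analytically across the exceptional divisor without being constant on it, so ``dense agreement plus connected fibres'' is not a valid deduction. To repair it you must use the formal quotient $\hat h=\hat f/\hat g$: since $\C[[y]]$ is a domain and $\widehat{g\circ\sigma}_{\tilde p}\neq 0$, the Taylor series of $\tilde u$ at every $\tilde p\in\sigma^{-1}(0)$ equals $\hat h\circ\hat\sigma_{\tilde p}$, whence $\tilde u\equiv\hat h(0)$ on the exceptional fibre. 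The essential gap is the final step: you explicitly defer the descent of $\E_\M$-regularity along the proper modification $\sigma$ --- the statement that a function $u$ with $u\circ\sigma\in\E_\M$ is itself of class $\{\M\}$ --- to ``the technical heart of \cite{MR3158580}''. Since that is precisely the reference whose theorem you are reconstructing, the argument is circular at its one nontrivial point; everything preceding it is reversible bookkeeping, and the entire difficulty of the division theorem is concentrated in that descent. The proposal is therefore a correct road map, but not a proof.
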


\section{Almost analytic extensions and the wavefront set in the ultradifferentiable setting}\label{UltraWF}
In this section we recall the almost analytic extension of ultradifferentiable functions given by Dyn'kin in \cite{zbMATH03751341,MR0587795} 
and its connection with the ultradifferentiable wavefront set
introduced by H\"ormander in \cite{MR0294849} that was proven in \cite{Fuerdoes1}.

We recall (see e.g.\ \cite{MR597145}) that a smooth function $F$ given on 
an open subset $\tilde{\Omega}\subseteq\C^n$ is almost analytic iff
\begin{equation*}
\bar{\partial}_jF=\frac{\partial}{\partial \bar{z}_j}F=\frac{1}{2}\biggl(\frac{\partial}{\partial x_j}+i\frac{\partial}{\partial y_j}\biggr)F
\end{equation*}
is flat on $\tilde{\Omega}\cap\R^n$.
The motivation to consider almost analytic function in the ultradifferentiable 
setting is the well-known fact that a function $f$ is smooth on $\Omega$ if and only
if there is an almost analytic function $F$ on some open set $\tilde{\Omega}\subseteq\C^n$ with $\tilde{\Omega}\cap\R^n=\Omega$ such that
$F\vert_\Omega=f$.
In the ultradifferentiable category the idea is now that if $f$ is ultradifferentiable 
of class $\{\M\}$
then it should be possible to construct an almost analytic extension $F$ of $f$ such
that the decrease of $\bar{\partial}_jF$ can be measured in terms of the weight sequence $\M$.
 (c.f.\ \cite{MR1253229}). 

In order to specify this decay we introduce for a regular weight sequence $\M$  its  associated weight given by
\begin{equation}\label{Def:weight}
h_\M (t)=\inf_k t^km_k \quad \text{if } t>0\quad \text{\&}\quad h_\M(0)=0.
\end{equation}
Conversely it is possible to extract the weight sequence from its weight:
\begin{equation*}
m_k=\sup_t\frac{h_\M(t)}{t^k}
\end{equation*}
%In order to describe the connection between the weight and the weight function associated to a regular weight sequence we set
%\begin{align*}
%\tilde{\omega}_\M(t)&:=\sup_{j\in\N_0}\log\frac{t^j}{m_j}\\
%\tilde{h}_\M (t)&=\inf_k t^kM_k 
%\end{align*}
%for $t>0$ and $\tilde{\omega}_\M(0)=\tilde{h}_\M(0)=0$.
The weight $h_\M$ is continuous with values in $[0,1]$, equals $1$ on $[1,\infty)$
and goes more rapidly to $0$ than $t^p$ for any $p>0$ for $t\rightarrow 0$, c.f.\ \cite{Fuerdoes1}.
Before we are able to state the Theorem of Dyn'kin alluded above, we have to note
that his result gives not the existence of a global extension as in the smooth case 
but only a semiglobal statement. This corresponds with the fact that real-analytic functions have generally only local holomorphic extensions.
In order to state the precise form of Dyn'kin's result we recall the following definition from e.g.\ \cite{MR0320743}. 
If $K\subseteq\Omega$ is compact then $\E_{\M}(K)$ is the space of smooth functions which are defined on 
some neighbourhood of $K$ and on $K$ they satisfy \eqref{DCdefiningEst} for
some constants $C,h>0$.
\begin{Thm}\label{Dynkin1}
	Let $\M$ be a regular weight sequence, $K\subset\subset\R^n$ a compact and convex set with 
	$K=\overline{K^\circ}$.% and $f\in\E(\Omega)$. 
	Then $f\in\E_\M(K)$ if and only  if there exists a test function $F\in\D(\C^n)$ with $F|_K=f$ and 
	if there are constants $C,Q>0$ such that
	\begin{equation}
	\bigl\lvert\bar{\partial}_jF(z,\bar{z})\bigr\rvert\leq C h_\M(Qd_K(z))
	\end{equation}
	where $1\leq j\leq n$ and $d_K$ is the distance function with respect to $K$ on $\C^n\!\setminus\!K$.
\end{Thm}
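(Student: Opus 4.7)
The theorem is a sharp correspondence between membership in $\E_\M(K)$ and the existence of an almost-analytic extension whose $\bar\partial$ decays like $h_\M$ composed with the distance to $K$. I would treat the two implications separately, both times exploiting the duality $m_k=\sup_{t>0}h_\M(t)/t^k$ together with the companion identity $h_\M(t)=\inf_k t^km_k$.

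\emph{Sufficiency (construction of $F$).} By the definition of $\E_\M(K)$ the given $f$ extends to a function of class $\{\M\}$ on an open neighbourhood $\Omega$ of $K$, still satisfying \eqref{DCdefiningEst}. Fix $\chi\in\D(\C^n)$ equal to $1$ on a neighbourhood of $K$ and supported in $\{x+iy:x\in\Omega\}$. I would define, for $z=x+iy$,
\begin{equation*}
F(z)=\chi(z)\sum_{\alpha\in\N_0^n}\frac{\partial^\alpha f(x)}{\alpha!}(iy)^\alpha\,\varphi_{|\alpha|}(y),
\end{equation*}
where the auxiliary cutoffs $\varphi_k\in\D(\R^n)$ are chosen so that the series (formally the Taylor expansion of a holomorphic extension) effectively terminates, for each fixed $y$, near the index $N(y)$ that minimizes $(Q|y|)^km_k$. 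A direct computation of $\bar\partial_jF$, modelled on the one-dimensional identity $\bar\partial\sum_{k\le N}\frac{f^{(k)}(x)}{k!}(iy)^k=\tfrac{1}{2}\frac{f^{(N+1)}(x)}{N!}(iy)^N$, shows that the interior terms telescope; what remains is controlled by the single tail $\frac{\partial^\beta f(x)}{\beta!}(iy)^\beta$ with $|\beta|\approx N(y)$, plus boundary contributions from derivatives of the $\varphi_k$. Inserting the $\{\M\}$-bound on $\partial^\beta f$ together with $h_\M(t)=\inf_k t^km_k$, and absorbing constants via \eqref{derivclosed}--\eqref{stlogconvex}, then yields $|\bar\partial_jF(z)|\le Ch_\M(Q'd_K(z))$, using that $d_K(z)\le|y|$ whenever $\real z\in K$.

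\emph{Necessity (estimating derivatives of $f$).} Given $F\in\D(\C^n)$ with $F|_K=f$ and the stated $\bar\partial$-decay, fix $x\in K$, a multi-index $\alpha$, and a small radius $r>0$. The iterated Cauchy--Pompeiu formula on the polydisc $D_r(x)=\prod_{j=1}^n\{\zeta_j:|\zeta_j-x_j|<r\}$, differentiated under the integral sign, gives
\begin{equation*}
\partial^\alpha F(x)=\frac{\alpha!}{(2\pi i)^n}\int_{T_r(x)}\frac{F(\zeta)\,d\zeta}{(\zeta-x)^{\alpha+1}}+R_\alpha(x,r),
\end{equation*}
with $T_r(x)$ the distinguished boundary, the power $\alpha+1$ understood componentwise, and $R_\alpha$ a sum of $2^n-1$ volume integrals, each containing a factor $\bar\partial_jF$ against a kernel singular to order $|\alpha|+1$ in at least one variable. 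The torus integral is bounded crudely by $C\|F\|_\infty\alpha!\,r^{-|\alpha|}$. Since $x\in K$, one has $d_K(\zeta)\le|\zeta-x|$ on $D_r(x)$, so the estimate $h_\M(Q|\zeta-x|)\le Q^{|\alpha|+p}|\zeta-x|^{|\alpha|+p}m_{|\alpha|+p}$ (with $p$ chosen large enough for integrability), combined with \eqref{derivclosed} to convert $m_{|\alpha|+p}$ into a constant multiple of $m_{|\alpha|}$, bounds $R_\alpha$ by an expression of the same form $C\alpha!\,h^{|\alpha|}m_{|\alpha|}$. Fixing $r$ as a small positive constant and collecting constants yields the required Denjoy--Carleman estimate $|\partial^\alpha f(x)|\le Ch^{|\alpha|}m_{|\alpha|}|\alpha|!$, uniformly on $K$.

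The principal obstacle is the sufficiency direction: the cutoffs $\varphi_k$ must be calibrated precisely to the quotients $m_{k+1}/m_k$ so that the near-telescoping in $\bar\partial F$ leaves only one essentially optimal tail term, since naive truncations generate cross-term errors that spoil the sharp $h_\M$-decay; the multi-index bookkeeping requires some care but is routine once the one-variable model is in hand. The necessity direction is more straightforward, amounting to iterated Cauchy--Pompeiu calculus together with the elementary distance inequality $d_K(\zeta)\le|\zeta-x|$ which depends only on $x\in K$, not on the convexity of $K$; convexity and $K=\overline{K^\circ}$ are really needed only in the sufficiency part to make the projection and Taylor-expansion base-point $x=\real z$ well-defined and smoothly parametrised.
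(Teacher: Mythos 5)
First, a framing remark: the paper does not prove this statement at all --- Theorem \ref{Dynkin1} is quoted as Dyn'kin's theorem from \cite{zbMATH03751341,MR0587795} --- so your proposal has to stand on its own. Its overall shape (truncated Taylor expansion for sufficiency; Cauchy--Pompeiu plus the duality $h_\M(t)=\inf_k t^km_k$, $m_k=\sup_t h_\M(t)/t^k$ for necessity) is the right one, but both halves contain genuine gaps. In the sufficiency direction the decisive step is the one you assert in passing: that $f\in\E_\M(K)$ ``still satisfies \eqref{DCdefiningEst}'' on an open neighbourhood $\Omega$ of $K$. By the definition recalled just before the theorem, membership in $\E_\M(K)$ gives the estimates \emph{on $K$ only}; upgrading them to a neighbourhood is a Whitney-type extension problem which fails in general for quasianalytic classes, and regular weight sequences are allowed to be quasianalytic here. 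Without it, the coefficients $\partial^\alpha f(\real z)$ in your formula are uncontrolled as soon as $\real z\notin K$, and the estimate collapses. This is precisely why the construction must be anchored at points \emph{of} $K$: take a Whitney decomposition of $\C^n\!\setminus\!K$ into cubes of diameter comparable to $d_K$, a subordinate partition of unity $\psi_i$ with $\lvert\partial\psi_i\rvert\lesssim d_K^{-1}$, nearest points $x_i\in K$ and truncation degrees $N_i$ minimizing $t^Nm_N$ at $t\sim d_K$, and set $F=\sum_i\psi_iT^{N_i}_{x_i}f$; the key estimate is on the differences $T^{N_i}_{x_i}f-T^{N_j}_{x_j}f$ for neighbouring cubes, controlled by Taylor's formula along the segment $[x_i,x_j]\subseteq K$ --- this is where convexity and $K=\overline{K^\circ}$ actually enter, not in parametrising the base point $x=\real z$. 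Your cutoff-in-$y$ construction with base point $\real z$ proves the weaker, purely local statement of Corollary \ref{CharMalmostanalytic} (decay in $\lvert\imag z\rvert$ for a function already of class $\{\M\}$ on an open set), not the theorem.

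The necessity direction also has a gap, though a repairable one: the iterated Cauchy--Pompeiu formula on a polydisc does not produce $2^n-1$ remainder terms each carrying a single factor $\bar{\partial}_jF$. The term indexed by a subset $S\subseteq\{1,\dotsc,n\}$ carries the mixed derivative $\prod_{j\in S}\bar{\partial}_jF$, which for $\lvert S\rvert\geq 2$ is not controlled by the hypothesis, while its kernel $\prod_{j\in S}(\zeta_j-x_j)^{-\alpha_j-1}$ is not integrable after differentiation. You should instead use a representation whose correction term involves only the first-order $\bar{\partial}F$: either the Bochner--Martinelli formula, or restrict $F$ to complex lines $\lambda\mapsto x+\lambda v$ through $x\in K$ with $v\in\R^n$ (so that $\partial_{\bar{\lambda}}F(x+\lambda v)=\sum_jv_j\bar{\partial}_jF$ and $d_K(x+\lambda v)\leq\lvert\lambda\rvert$), run the one-variable argument to bound all directional derivatives $(v\cdot\partial)^kf(x)$ uniformly, and recover $\partial^\alpha f$ by polarization using \eqref{stlogconvex} and \eqref{derivclosed}. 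With either fix, that half of the argument goes through essentially as you describe.
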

%We shall note that Dyn'kin used the function $h_1(t)=\inf_{k\in\N} m_k t^{k-1}$ instead of the weight $h_\M$\footnote{$h_1$ is in fact the weight associated to the shifted sequence $(m_{k+1})_{k}$}.
%It is easy to see that $h_1(t)=h_\M(t)/t$. 
%But we observe that 
%\begin{equation*}
%h_\M(t)=\inf_{k\in\N_0}m_kt^k\leq t\inf_{k\in\N}m_k t^{k-1}=th_1(t)\leq Ct\inf_{k\in\N}m_{k-1}t^{k-1}=Cth_{\M}(t).
%\end{equation*}
%Since $h_\M$ is rapidly decreasing for $t\rightarrow 0$ we can 
%interchange these two functions in the formulation of Theorem \ref{Dynkin1}. 
%In fact, Dyn'kin's proof gives immediately the following result.
%We state here only the partial result we need later.
The local form of Theorem \ref{Dynkin1} is 
\begin{Cor}\label{CharMalmostanalytic}
	If $f$ is ultradifferentiable of class $\{\M\}$ near $p$, 
	%i.e.\ there exists a compact neighbourhood $K$ of $p$ such that $f\vert_K\in\E_\M(K)$,
	%\eqref{DC-classDef} holds
	then  there are an open neighbourhood $W\subseteq\Omega$,
	a constant $\rho>0$ and a function $F\in\E(W+iB(0,\rho))$ such that $F|_W=f|_W$ and
	\begin{equation}\label{Malmostest}
	\bigl\lvert\bar{\partial}_j F(x+iy)\bigr\rvert\leq Ch_\M (Q\lvert y\rvert)
	\end{equation}
	for some positive constants $C,Q$ and all $1\leq j\leq n$ and $x+iy\in W+iB(0,\rho)$.
\end{Cor}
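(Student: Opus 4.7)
The plan is to deduce the corollary directly from Theorem \ref{Dynkin1} by passing from a global statement on a compact convex set to a local statement on an open neighborhood. Since $f$ is ultradifferentiable of class $\{\M\}$ near $p$, there exists an open neighborhood $U$ of $p$ in $\Omega$ on which \eqref{DCdefiningEst} holds uniformly for some constants $C,h>0$. I would choose $r>0$ small enough that the closed ball $K:=\overline{B(p,r)}\subseteq U$; this set is compact, convex and equal to the closure of its interior, and $f\in\E_\M(K)$ by construction.

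Applying Theorem \ref{Dynkin1} to $f|_K$ then produces a test function $F\in\D(\C^n)$ such that $F|_K=f|_K$ and
\begin{equation*}
\bigl\lvert\bar{\partial}_jF(z,\bar{z})\bigr\rvert\leq C'h_\M(Q\,d_K(z))
\end{equation*}
for some constants $C',Q>0$ and every $z\in\C^n\setminus K$, with $1\leq j\leq n$. I would then set $W:=B(p,r)$, the open ball, and choose any $\rho>0$ (for concreteness, $\rho=r$). Clearly $W$ is an open neighborhood of $p$ contained in $\Omega$, and $F\in\E(W+iB(0,\rho))$ as the restriction of a smooth function on $\C^n$.

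It remains to control $\bar\partial_j F$ on the tube $W+iB(0,\rho)$ in terms of the imaginary part alone. The key observation is that for any point $x+iy$ with $x\in W\subseteq K$ and $y\in B(0,\rho)$, the distance from $x+iy$ to $K$ satisfies
\begin{equation*}
d_K(x+iy)\leq \lvert (x+iy)-x\rvert=\lvert y\rvert.
\end{equation*}
Because $h_\M$ is nondecreasing on $[0,\infty)$ (indeed, each map $t\mapsto t^km_k$ is nondecreasing, hence so is the pointwise infimum), this yields $h_\M(Q\,d_K(x+iy))\leq h_\M(Q\lvert y\rvert)$, and combining with the previous estimate gives \eqref{Malmostest} with constants $C=C'$ and $Q$ as above. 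The restriction identity $F|_W=f|_W$ is inherited from $F|_K=f|_K$ since $W\subseteq K$.

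There is no real obstacle here; the only points requiring care are the choice of a convex compact neighborhood on which $f$ genuinely lies in $\E_\M(K)$ (so Theorem \ref{Dynkin1} applies) and the monotonicity of the weight $h_\M$, both of which are routine. The statement is essentially a cosmetic localization of Theorem \ref{Dynkin1}, replacing the pointwise distance to a convex hull by the transverse coordinate $\lvert y\rvert$, which is the form of the estimate actually used in the subsequent microlocal analysis.
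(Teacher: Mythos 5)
Your argument is correct and is exactly the intended derivation: the paper states Corollary \ref{CharMalmostanalytic} without proof as ``the local form'' of Theorem \ref{Dynkin1}, and your reduction --- apply Dyn'kin's theorem on a compact convex ball $K=\overline{B(p,r)}$, then use $d_K(x+iy)\leq\lvert y\rvert$ for $x\in K$ together with the monotonicity of $h_\M$ --- is precisely the routine localization being left to the reader. The only detail worth noting is that on $W$ itself (where $y=0$) the bound forces $\bar\partial_jF=0$ by continuity, which is consistent and requires no extra work.
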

We call such function $F$ an $\M$-almost analytic extension of $f$.

The following theorem is the $\M$-almost analytic version of the almost-holomorphic implicit function 
theorem proven in \cite{MR2085046}.
\begin{Thm}\label{Lamel-implicit}
	Let $\M$ be a regular weight sequence, $U\subseteq\C^N$ a neighbourhood of the origin, $A\in\C^p$ 
	and $F:U\times\C^p\rightarrow \C^N$
	of class $\{\M\}$ on $U$ and polynomial in the last variable with $F(0,A)=0$ and $F_Z(0,A)$ invertible.
	Then there exists a neighbourhood $U^\prime\times V^\prime$ of $(0,A)$ and a smooth mapping
	$\phi=(\phi_1,\dotsc,\phi_N): U^\prime\times V^\prime\rightarrow \C^N$ with $\phi(0,A)=0$ with the property that if 
	$F(Z,\bar{Z},W)=0$ for some $(Z,W)\in U^\prime\times V^\prime$ then $Z=\phi (Z,\bar{Z},W)$.
	Furthermore, there are constants $C,\gamma>0$ such that 
	\begin{equation}\label{AlmostholomorphicEst}
	\biggl\lvert\frac{\partial \phi_j}{\partial Z_k}(Z,\bar{Z},W)\biggr\rvert \leq Ch_\M\bigl(\gamma\lvert \phi(Z,\bar{Z},W)- Z\rvert\bigr)
	\end{equation}
	for all $1\leq j,k\leq N$ and $\phi$ is holomorphic in $W$.
\end{Thm}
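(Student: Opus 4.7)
The plan is to mimic Lamel's strategy from \cite{MR2085046} in the smooth setting, upgrading the flatness bounds on $\bar\partial$ of an almost-analytic extension to the quantitative $h_\M$-estimates supplied by Corollary~\ref{CharMalmostanalytic}. The three ingredients are an $\M$-almost analytic extension $\tilde F$ of $F$ in the variables $(Z,\bar Z)$, the smooth implicit function theorem with parameters, and the Wirtinger chain rule.

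Since $F$ is polynomial in $W$, write $F(Z,\bar Z,W)=\sum_\alpha F_\alpha(Z,\bar Z)\,W^\alpha$ with finitely many $F_\alpha\in\E_\M(U)$, and apply Corollary~\ref{CharMalmostanalytic} coefficient-wise. Reparameterizing the Dyn'kin extensions by an auxiliary complex variable $\zeta\in\C^N$ playing the role of $\bar Z$ yields functions $\tilde F_\alpha(Z,\zeta)$ on a neighbourhood of the real slice $\{\zeta=\bar Z\}\subset\C^N\times\C^N$. Setting $\tilde F(Z,\zeta,W):=\sum_\alpha\tilde F_\alpha(Z,\zeta)\,W^\alpha$ produces a smooth function, polynomial (hence holomorphic) in $W$, restricting to $F$ on $\{\zeta=\bar Z\}$, and satisfying
\begin{equation*}
\bigl\lvert\bar\partial_Z\tilde F(Z,\zeta,W)\bigr\rvert+\bigl\lvert\bar\partial_\zeta\tilde F(Z,\zeta,W)\bigr\rvert\leq C\,h_\M\bigl(\gamma|\zeta-\bar Z|\bigr)
\end{equation*}
on compact subsets of the domain.

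Next, I apply the parameter-dependent smooth implicit function theorem to $\tilde F(Z,\zeta,W)=0$ at $(0,0,A)$. Since this is a real point, $\tilde F_{\bar Z}(0,0,A)=0$, so the real Jacobian of $\tilde F$ in the variables underlying $Z$ is determined by the invertible complex matrix $F_Z(0,A)$ and is itself invertible. This yields a smooth $\Phi(\zeta,W)$ near $(0,A)$, with $\Phi(0,A)=0$ and $\tilde F(\Phi(\zeta,W),\zeta,W)\equiv 0$; holomorphy of $\Phi$ in $W$ follows routinely from $\bar\partial_W\tilde F=0$ together with the smallness of $\tilde F_{\bar Z}$ near the origin. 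To verify that $\Phi$ is $\M$-almost holomorphic in $\zeta$, I differentiate this identity in $\bar\zeta_k$ by the Wirtinger chain rule:
\begin{equation*}
\tilde F_Z\cdot\frac{\partial\Phi}{\partial\bar\zeta_k}+\tilde F_{\bar Z}\cdot\overline{\frac{\partial\Phi}{\partial\zeta_k}}+\tilde F_{\bar\zeta_k}=0.
\end{equation*}
Inverting $\tilde F_Z$ and evaluating at $Z=\Phi(\zeta,W)$ yields $\lvert\partial\Phi/\partial\bar\zeta_k(\zeta,W)\rvert\leq C\,h_\M(\gamma\lvert\zeta-\overline{\Phi(\zeta,W)}\rvert)$. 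I then define $\phi(Z,\bar Z,W):=\Phi(\bar Z,W)$ on suitable neighbourhoods $U'\times V'$: by uniqueness in the IFT, $F(Z,\bar Z,W)=0$ forces $Z=\phi$; the Wirtinger chain rule produces $\partial\phi_j/\partial Z_k(Z,\bar Z,W)=\partial\Phi_j/\partial\bar\zeta_k(\bar Z,W)$; and the identity $\lvert\bar Z-\overline{\Phi(\bar Z,W)}\rvert=\lvert Z-\phi(Z,\bar Z,W)\rvert$ converts the estimate on $\Phi$ directly into \eqref{AlmostholomorphicEst}.

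The main technical obstacle I anticipate is the third step: one must verify that the concrete weight $h_\M$ propagates through the Wirtinger chain rule and the implicit solution in precisely the right form. Whereas in the smooth case of \cite{MR2085046} any polynomial flatness rate passes automatically through such manipulations, here one relies on the uniform boundedness of $\partial_\zeta\Phi$ near the origin (valid because $\Phi$ is smooth with $\Phi(0,A)=0$) together with the monotonicity and rescaling properties of $h_\M$ to absorb the $\tilde F_{\bar Z}\cdot\overline{\partial_\zeta\Phi}$ contribution into a single $h_\M$-bound with a controlled constant $\gamma$.
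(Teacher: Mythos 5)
Your proposal is correct and is essentially the paper's own proof: the paper realizes your ``auxiliary variable $\zeta$ playing the role of $\bar Z$'' by applying Dyn'kin's theorem in the underlying real coordinates $(x,y)$ and then substituting $\xi=(Z+\chi)/2$, $\eta=(Z-\chi)/(2i)$, which is exactly your polarization, and it then runs the same smooth implicit function theorem and Wirtinger chain-rule computation, the only difference being that you bound $\Phi_{\bar\zeta_k}$ directly from the single identity $\tilde F_Z\Phi_{\bar\zeta_k}+\tilde F_{\bar Z}\overline{\Phi_{\zeta_k}}+\tilde F_{\bar\zeta_k}=0$ using boundedness of $\Phi_{\zeta_k}$, whereas the paper eliminates $\bar\psi_{\bar\chi}$ via the conjugate identity -- both give \eqref{AlmostholomorphicEst}. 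The one soft spot, which the paper shares by simply asserting it, is the exact holomorphy of $\Phi$ in $W$: differentiating the conjugated identity in $\bar W_l$ produces the term $\overline{\tilde F_{W_l}}\neq 0$, which forces $\overline{\Phi_{W_l}}\neq 0$ and hence leaves $\Phi_{\bar W_l}=-\tilde F_Z^{-1}\tilde F_{\bar Z}\overline{\Phi_{W_l}}$, so your argument (and the paper's) really yields only $\lvert\Phi_{\bar W_l}\rvert\leq Ch_\M\bigl(\gamma\lvert\zeta-\overline{\Phi}\rvert\bigr)$, i.e.\ $\M$-almost-holomorphy in $W$ rather than $\bar\partial_W\Phi=0$ -- a flatness that is in any case all that the application in Theorem \ref{Reflectionsprinciple1} requires.
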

\begin{proof}
	We write $F(Z,\bar{Z},W)=F(x,y,W)$, where $(x,y)\in\R^N\times\R^N$ are the underlying real 
	coordinates of $\C^N$, i.e.\ $Z_j=x_j+iy_j$ for $1\leq j\leq N$. 
	Let $U_0\subseteq\R^N$ be a convex neighbourhood of $0$ such that $\overline{U_0\times U_0}\subseteq U$.
	Using Theorem \ref{Dynkin1} we find a smooth mapping
	\begin{equation*}
	\tilde{F}=U_0\times\R^N\times U_0\times \R^N\times\C^p\longrightarrow \C^N
	\end{equation*}
	such that $\tilde{F}(x,x^\prime,y,y^\prime,W)\big\vert_{x^\prime=y^\prime=0}=F(x,y,W)$ and 
	if we write $\xi_k=x_k+ix^\prime_k$, $\eta_k=y_k+iy^\prime_k$ for $k=1,\dotsc,N$ and set
	$\zeta =(\xi,\eta)$, then for each compact subset $K\subset\subset \C^p$ 
	there are constants $C,\gamma >0$ such that
	\begin{subequations}\label{Dynkin2}
		\begin{align}
		\biggl\lvert \frac{\partial\tilde{F}_j}{\partial \bar{\xi}_k}(\zeta,\bar{\zeta},W)\biggr\rvert 
		&\leq Ch_\M(\gamma\lvert\imag\zeta\rvert)\\
		\biggl\lvert \frac{\partial\tilde{F}_j}{\partial \bar{\eta}_k}(\zeta,\bar{\zeta},W)\biggr\rvert 
		&\leq Ch_\M(\gamma\lvert\imag\zeta\rvert)
		\end{align}
	\end{subequations}
	for $(\zeta,W)\in (U_0+i\R^N)^2\times K$ and $1\leq j,k\leq N$. 
	Note also that $\tilde{F}$ is still polynomial in the variable $W$.
	
	We introduce new variables $\chi=(\chi_1,\dots,\chi_N)\in\C^N$ by
	\begin{align*}
	\xi_k &=\frac{Z_k+\chi_k}{2} & \eta_k&=\frac{Z_k-\chi_k}{2i} & 1&\leq k\leq N\\
	\intertext{and note that}
	x_k&=\frac{Z_k+\chi_k}{2}\bigg\vert_{\chi_k=\bar{Z}_k} & y_k&=\frac{Z_k-\chi_k}{2i}\bigg\vert_{\chi_k=\bar{Z}_k}.
	\end{align*}
	
	We also set $G(Z,\bar{Z},\chi,\bar{\chi},W)=\tilde{F}(\xi,\bar{\xi},\eta,\bar{\eta},W)$.
	The function $G$ is therefore smooth in the first $2N$ variables in some neighbourhood of the origin
	and polynomial in the last $p$ variables. 
	Due to the definition of $G$ %and the  properties of $\tilde{F}$ 
	we have
	\begin{align*}
	\frac{\partial G}{\partial \bar{Z}}&=\frac{1}{2}\frac{\partial \tilde{F}}{\partial \bar{\xi}}
	+\frac{1}{2i}\frac{\partial \tilde{F}}{\partial \bar{\eta}}\\
	\frac{\partial G}{\partial \bar{\chi}}&=\frac{1}{2}\frac{\partial \tilde{F}}{\partial \bar{\xi}}
	-\frac{1}{2i}\frac{\partial \tilde{F}}{\partial \bar{\eta}}.
	\end{align*}
	
	We are going to compute the real Jacobian of $G$
	at the point $(0,A)$. We obtain
	\begin{align*}
	\frac{\partial G}{\partial Z}(0,A)&=\frac{\partial F}{\partial Z}(0,A)\\ 
	\intertext{and}
	\frac{\partial G}{\partial \bar{Z}}(0,A)&=\frac{1}{2}\biggl(\frac{\partial \tilde{F}}{\partial \bar{\xi}}(0,A)
	-i\frac{\partial\tilde{F}}{\partial \bar{\eta}}(0,A)\biggr)=0
	\end{align*}
	and thus
	\begin{equation*}
	\det \begin{pmatrix}
	\frac{\partial G}{\partial Z} &  \frac{\partial G}{\partial \bar{Z}}\\
	\frac{\partial \bar{G}}{\partial Z} &  \frac{\partial \bar{G}}{\partial \bar{Z}}
	\end{pmatrix} (0,A)
	=\biggl\lvert\,\det \frac{\partial F}{\partial Z}(0,A)\,\biggr\rvert^2\neq 0
	\end{equation*}
	by assumption. Hence, by the smooth implicit function theorem, there is a smooth mapping $\psi$ defined 
	in some open neighbourhood of $(0,A)$, valued in $\C^N$ and holomorphic in the variable $W$
	such that $Z=\psi (\chi,\bar{\chi},W)$ solves the equation $G(Z,\bar{Z},\chi,\bar{\chi},W)=0$ uniquely.
	Since $G(Z,\bar{Z},\bar{Z},Z,W)=F(Z,\bar{Z},W)$, this fact implies that if $F(Z,\bar{Z},W)=0$ then
	$Z=\psi(\bar{Z},Z,W)$.
	We set $\phi (Z,\bar{Z},W)=\psi(\bar{Z},Z,W)$ and claim that $\varphi$ satisfies 
	\eqref{AlmostholomorphicEst}. 
	
	In fact, if we differentiate the implicit equation 
	$G(\psi(\chi,\bar{\chi},W),\overline{\psi(\chi,\bar{\chi},W)},\chi,\bar{\chi},W)=0$ then we obtain
	\begin{align*}
	G_Z\psi_{\bar{\chi}}+G_{\bar{Z}}\bar{\psi}_{\bar{\chi}}+G_{\bar{\chi}}&=0\\
	\bar{G}_{\bar{Z}}\bar{\psi}_{\bar{\chi}}+\bar{G}_Z\psi_{\bar{\chi}}+\bar{G}_{\bar{\chi}}&=0.
	\end{align*}
	If we multiply the last line with $G_{\bar{Z}}\bar{G}_{\bar{Z}}^{-1}$ 
	and substract the result from the first line then 
	\begin{equation*}
	\bigl(G_Z -G_{\bar{Z}}\bar{G}_{\bar{Z}}^{-1}\bar{G}_Z\bigr)\psi_{\bar{\chi}}
	=G_{\bar{Z}}\bar{G}_{\bar{Z}}^{-1}\bar{G}_{\bar{\chi}}-G_{\bar{\chi}}.
	\end{equation*}
	
	Hence we have in a small neighbourhood of $(0,A)$ that
	\begin{equation*}
	\phi_Z(Z,\bar{Z},W)=\psi_{\bar{\chi}}(\bar{Z},Z,W)
	=\Biggl(\frac{G_{\bar{Z}}\bar{G}_{\bar{Z}}^{-1}
		\bar{G}_{\bar{\chi}}-G_{\bar{\chi}}}{G_Z -G_{\bar{Z}}\bar{G}_{\bar{Z}}^{-1}\bar{G}_Z}\Biggr)
	\Bigl(\psi(\bar{Z},Z,W),\overline{\psi(\bar{Z},Z,W)},\bar{Z},Z,W\Bigr).
	\end{equation*}
	This formula shows that any function $\partial_{Z_k}\varphi_{j}$ is a sum of products each of which 
	contains a factor of the form $G_{\bar{Z}_\ell}$ or $G_{\bar{\chi}_\ell}$ for some $\ell$.
	Note also that by definition $\imag \xi=\tfrac{1}{2}(\imag Z+\imag\chi)$ and 
	$\imag\eta=-\tfrac{1}{2}(\real Z-\real\chi)$.
	
	Hence \eqref{Dynkin2} implies on some compact neighbourhood of $(0,A)$, where $\det G_Z^{-1}$ is bounded,
	\begin{equation*}
	\begin{split}
	\bigl\lvert\phi_Z(Z,\bar{Z},W)\bigr\rvert 
	&\leq Ch_\M \biggl(\frac{1}{2}\gamma \bigl(\lvert \imag \phi(Z,\bar{Z},W) 
	-\imag Z\lvert^2+ \lvert \real Z -\real \phi(Z,\bar{Z},W)\rvert^2\bigr)^{\tfrac{1}{2}}\biggr)\\
	&=C h_\M\bigl(\gamma \lvert \phi(Z,\bar{Z},W)-Z\rvert\bigr)
	\end{split}
	\end{equation*}
	for some positive constants $C$ and $\gamma$.
\end{proof}

In the following we recall the results on the ultradifferentiable wavefront set 
that we need in this paper. We start with the definition given in \cite{MR0294849}.
%In 1971 H{\"o}rmander \cite{MR0294849} proved the following local characterization of $\E_\M$ via the Fourier transform:
%\begin{Prop}\label{MCharFT}
%	Let $u\in\D^\prime (\Omega)$ and $p_0\in\Omega$. 
%	Then $u$ is ultradifferentiable of class $\{\M\}$ near $p_0$ 
%	if and only if there are an open neighbourhood $V$ of $p_0$, 
%	a bounded sequence $(u_N)_N\subseteq\E^\prime (U)$
%	such that $u|_V=(u_N)|_V$ and some constant $Q>0$ so that
%	\begin{equation*}
%	\sup_{\substack{\xi\in\R^n\\ 
%			N\in\N_0}}\frac{\lvert\xi\rvert^N\lvert \hat{u}_N(\xi)\rvert}{Q^N M_N}<\infty.
%	\end{equation*}
%\end{Prop}
%Subsequently he used this fact to define analogously to the smooth category:
\begin{Def}\label{WF-M Def1}
	Let $u\in\D^\prime (\Omega)$ and $(x_0,\xi_0)\in T^*\Omega\!\setminus\!\{ 0\}$.
	We say that $u$ is \emph{microlocally ultradifferentiable of class $\{\M\}$} at $(x_0,\xi_0)$ 
	iff there is a bounded sequence 
	$(u_N)_N\subseteq\E^\prime (\Omega)$ such that $u_N\vert_V\equiv u\vert_V$, where $V\in\U(x_0)$
	and a conic neighbourhood $\Gamma$ of $\xi_0$  such that for some constant $Q>0$
	\begin{equation}\label{WF-M Estimate1}
	\sup_{\substack{\xi\in\Gamma\\
			N\in\N_0}} \frac{\lvert\xi\rvert^N\lvert\hat{u}_N\rvert}{Q^N m_N N!}<\infty.
	\end{equation}
	The ultradifferentiable wavefront set $\WF_\M u$ is then defined as
	\begin{equation*}
	\WF_\M u:=\bigl\{(x,\xi)\in T^*\Omega\!\setminus\!\{ 0\}\mid u\text{ is not microlocal of class }\{\M\}
	\text{ at }(x,\xi)\bigr\}.
	\end{equation*}
\end{Def}
The basic properties $\WF_\M$ shown by H{\"o}rmander in \cite{MR1996773} are the following.
\begin{Thm}[\cite{MR1996773} Theorem 8.4.5-8.4.7]\label{WF-MProperties}
	Let $u\in\D^{\prime}(\Omega)$ and $\M,\mathcal{N}$ weight sequences. Then we have 
	\begin{enumerate}
		\item $\WF_\M u$ is a closed conic subset of $\Omega\times\R^n\!\setminus\!\{0\}$.
		\item The projection of $\WF_\M u$ in $\Omega$ is
		\begin{equation*}
		\pi_1\bigl(\WF_\M u\bigr)=\mathrm{sing}\,\supp_{\M} u=
		\overline{\bigl\{x\in\Omega \;\vert\; \nexists V\in\U(x):\, u\vert_V\in\E_\M (V)\}}
		\end{equation*}
		\item $\WF u\subseteq \WF_{\NN}u\subseteq\WF_{\M}u$ if $\M\preccurlyeq \mathcal{N}$.
		\item If $P=\sum p_{\alpha}D^{\alpha}$\footnote{We use in the following the notation $D_j=-i\partial_j$.} is a partial differential operator with ultradifferentiable coefficents
		of class $\{\M\}$ then $\WF_{\M} Pu\subseteq \WF_\M u$.
	\end{enumerate}
\end{Thm}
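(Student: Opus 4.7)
The plan is to verify each of the four items in turn, combining the definition of $\WF_\M$ with Fourier-side arguments adapted to the $\M$-setting. For property (1), closedness and conicity are immediate from the definition: if $(u_N)$ and $\Gamma$ witness microlocal $\M$-regularity at $(x_0, \xi_0)$, then the same data witness regularity at every point of the open conic set $V \times \Gamma$, so the complement of $\WF_\M u$ is open and conic.

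For property (3), I would compare decay estimates directly. Under $\M \preccurlyeq \NN$ one has $m_N \leq Q^N n_N$, so \eqref{WF-M Estimate1} for $\M$ yields the same estimate with $\NN$ and constant $Q^2$ in place of $Q$, giving $\WF_\NN u \subseteq \WF_\M u$. For $\WF u \subseteq \WF_\NN u$, using that $\chi u = \chi u_N$ for any cutoff $\chi$ supported in $V$ (since $u_N = u$ there), one splits $\widehat{\chi u}(\xi) = \hat\chi * \widehat{u_N}(\xi)$ into contributions from $\Gamma$ and its complement and optimizes in $N$ to obtain classical rapid polynomial decay of $\widehat{\chi u}$ on a slightly smaller cone.

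The main obstacle is property (2). The easy inclusion $\pi_1(\WF_\M u) \subseteq \singsupp_\M u$ is handled by setting $u_N := \chi u$ for every $N$, with $\chi \in \E_\M$ a cutoff around $x_0$ whenever $u$ agrees with an $\E_\M$ function nearby, and applying the Paley--Wiener estimate obtained by integrating $(i\xi)^\alpha e^{-i x \cdot \xi} \chi u$ by parts $|\alpha|$ times using \eqref{DCdefiningEst} and the infimum formula in \eqref{Def:weight}. For the reverse inclusion, I would exploit compactness of $S^{n-1}$: assuming $(x_0, \xi) \notin \WF_\M u$ for every $\xi \neq 0$, cover $S^{n-1}$ by finitely many conic neighborhoods $\Gamma_j$ with bounded sequences $(u_N^{(j)}) \subseteq \E'$ all agreeing with $u$ on a common $V \ni x_0$. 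Fix $\chi \in \E_\M$ supported in $V$ with $\chi \equiv 1$ near $x_0$ and a smooth conic partition $(\psi_j)$ subordinate to $(\Gamma_j)$; then
\begin{equation*}
\partial^\alpha (\chi u)(x) = (2\pi)^{-n}\sum_j \int \psi_j(\xi)(i\xi)^\alpha e^{i x \cdot \xi}\bigl(\hat\chi * \widehat{u_N^{(j)}}\bigr)(\xi)\,d\xi.
\end{equation*}
Splitting the inner convolution into $\eta \in \Gamma_j$ (where \eqref{WF-M Estimate1} applies to $\widehat{u_N^{(j)}}$) and $\eta \notin \Gamma_j$ (where $\hat\chi(\xi - \eta)$ enjoys $\M$-Paley--Wiener decay), and optimizing the choice of $N \sim |\alpha| + n + 1$ via \eqref{Def:weight}, should yield $|\partial^\alpha(\chi u)(x)| \leq C h^{|\alpha|} m_{|\alpha|}|\alpha|!$; the main subtlety is ensuring the constants are uniform over the finite cover.

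For property (4), let $P = \sum p_\alpha D^\alpha$ have coefficients in $\E_\M$ and let $(u_N)$ witness microlocal $\M$-regularity of $u$ at $(x_0, \xi_0)$. I would take $\chi \in \E_\M$ supported where $u_N = u$ and equal to $1$ near $x_0$, set $v_N := P(\chi u_N)$, and observe that $v_N = Pu$ on a smaller neighborhood and $(v_N)$ is bounded in $\E'$. On a conic neighborhood $\Gamma' \Subset \Gamma$ of $\xi_0$, the Fourier transform
\begin{equation*}
\widehat{v_N}(\xi) = (2\pi)^{-n}\sum_\alpha \widehat{\chi p_\alpha} * \bigl(\eta \mapsto \eta^\alpha \widehat{u_N}(\eta)\bigr)(\xi)
\end{equation*}
is estimated by splitting each convolution over a cone around $\Gamma$ and its complement: inside, \eqref{WF-M Estimate1} plus the factor $|\eta|^{|\alpha|}$ controls the integrand, while outside the $\M$-Paley--Wiener decay of $\widehat{\chi p_\alpha}$ inherited from $\chi p_\alpha \in \E_\M$ takes over. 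The most delicate step will be recombining these into $|\widehat{v_N}(\xi)| \leq C Q^N m_N N!/|\xi|^N$: this uses \eqref{derivclosed} to absorb the order of $P$ into the weight sequence and the log-convexity inequality $m_j m_k \leq Q^{j+k} m_{j+k}$ (consequence of \eqref{stlogconvex} and \eqref{normalization}) to collapse a convolution of two $\M$-bounds into a single one.
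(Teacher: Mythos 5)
The paper does not prove this theorem at all: it is quoted verbatim from H\"ormander's book (Theorems 8.4.5--8.4.7 of \cite{MR1996773}), so your attempt can only be measured against that source. Your overall Fourier-side strategy (localize, convolve $\hat\chi$ with $\hat u_N$, split the convolution over $\Gamma$ and its complement, optimize in $N$) is indeed the strategy of the cited proof, and parts (1) and (3) as you present them are fine.

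There is, however, a genuine gap that runs through your treatment of (2) and (4): you repeatedly invoke a cutoff $\chi\in\E_\M$ with compact support ("$u_N:=\chi u$ for every $N$, with $\chi\in\E_\M$"; "Fix $\chi\in\E_\M$ supported in $V$"; "$\chi\in\E_\M$ supported where $u_N=u$"). If $\M$ is quasianalytic --- a case the paper explicitly keeps in play, e.g.\ for Theorem \ref{NaganoThm} and the final section --- then $\D\cap\E_\M=\{0\}$ and no such $\chi$ exists. This is precisely why Definition \ref{WF-M Def1} is phrased in terms of a \emph{bounded sequence} $(u_N)$ rather than a single localization: the correct device is H\"ormander's sequence of smooth cutoffs $\chi_N$, equal to $1$ on a fixed neighbourhood of $x_0$ and satisfying $\sup\lvert\partial^\beta\chi_N\rvert\leq (CN)^{\lvert\beta\rvert}$ for $\lvert\beta\rvert\leq N$; one sets $u_N=\chi_N u$ and checks that integration by parts up to order $N$ only uses derivatives of $\chi_N$ of order $\leq N$, whose bounds $(CN)^{\lvert\beta\rvert}\leq (Ce)^N N!\leq (Ce)^N m_N N!$ are compatible with \eqref{WF-M Estimate1}. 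The same issue infects the reverse inclusion in (2) even for a merely smooth fixed $\chi$: there you need $\lvert\hat\chi(\zeta)\rvert\leq C_k\lvert\zeta\rvert^{-k}$ with $k\sim\lvert\alpha\rvert\to\infty$ and $C_k\leq Q^k m_k k!$, a uniformity that a generic smooth cutoff does not provide; again the $N$-dependent cutoffs (or, in the non-quasianalytic case only, an $\E_\M$ cutoff) are what make the constants close up. With that replacement your outline for (2) and (4) becomes the standard argument; without it, the proof covers only non-quasianalytic weight sequences, which is strictly weaker than the statement the paper uses.
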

Additionally we note that $\WF_\M u$ satisfies the following \emph{microlocal reflection property}:
\begin{equation}\label{microreflprop}
(x,\xi)\notin\WF_\M u \Longleftrightarrow (x,-\xi)\notin\WF_\M \bar{u}
\end{equation}
In particular, if $u$ is a real-valued distribution, i.e.\ $\bar{u}=u$, then 
$\WF_\M u\vert_{x}:=\{\xi\in\R^n\mid (x,\xi)\in\WF_\M u\}$ is symmetric at the origin.
%\begin{Ex}
%	It is easy to see that $\WF_\M \delta_p=\{p\}\times\R^n\!\setminus\!\{0\}$ 
%	for any regular weight sequence $\M$.
%\end{Ex}

%Our aim in this section is to develop, using the almost-analytic extension of functions in $\E_\M$ given by Dyn'kin, 
%a geometric description of $\WF_\M$ similarly to the one that was presented e.g.\
%by Liess \cite[section 4]{MR1806500} for the smooth wavefront set. 

It is a classic fact that the analytic wavefront set can not only characterized by
the Fourier transform but also holomorphic extension in certain directions, see
\cite{MR0650834}. Likewise, the smooth wavefront set can be characterized by
almost-analytic extensions, c.f.\ \cite{MR1806500}. 
We present now the basic results on the connection between almost-analytic extensions
and the ultradifferentiable wavefront set that we proved in \cite{Fuerdoes1}.
In order to do so we need first to recall some notations used in \cite{Fuerdoes1}: 
A subset $\Gamma\subseteq\R^d$ is a cone iff for all $\lambda>0$ 
and $y\in\Gamma$ we have $\lambda y\in\Gamma$.
If $r>0$ then 
\begin{equation*}
\Gamma_r :=\bigl\{y\in \Gamma \mid\, \lvert y\rvert <r\bigr\}.
\end{equation*} 
If $\Gamma^{\prime}\subseteq\Gamma$ is also a cone we write 
$\Gamma^{\prime}\subset\subset\Gamma$ iff 
$(\Gamma^{\prime}\cap S^{d-1})\subset\subset (\Gamma\cap S^{d-1})$.

%Analogous to Liess \cite[section 2.1]{MR1806500} (c.f.\ also Lamel \cite[section 2]{MR2085046}) in the smooth category 
If $\M$ is a weight sequence with associated weight $h_\M$ then a function 
$F\in\E(\Omega\times U\times\Gamma_r)$, $U\subseteq\R^d$ open, is said to be
\emph{$\M$-almost analytic} in the variables $(x,y)\in U\times \Gamma_r$ with parameter 
$x^\prime\in\Omega$ iff for all  $K\subset\subset\Omega$, 
$L\subset\subset U$ and cones $\Gamma^{\prime}\subset\subset\Gamma$ there are constants 
$C,Q>0$ such that for some $r^\prime$ we have
\begin{equation}
\biggl\lvert\frac{\partial F}{\partial \bar{z}_j}(x^\prime,x,y)\biggr\rvert \leq Ch_\M (Q\lvert y\rvert) 
\qquad (x^\prime,x,y)\in K\times L\times\Gamma^{\prime}_{r^\prime},\;j=1,\dotsc,d
\end{equation}
where $\tfrac{\partial}{\partial \bar{z}_j}=\tfrac{1}{2}(\partial_{x_j}+i\partial_{y_j})$.
%and $h_\M$ is  the weight associated to the regular weight sequence $\M$ as defined by \eqref{Def:weight}.

We may also say generally that a function $g\in\CC (\Omega\times U\times\Gamma_r)$ 
is of \emph{slow growth} in $ y\in\Gamma_r$ 
if for all $K\subset\subset\Omega$, $L\subset\subset U$ and $\Gamma^\prime\subset\subset\Gamma$
there are constants $c,k>0$ such that
\begin{equation}\label{temperategrowth}
\lvert g(x^\prime,x,y)\rvert\leq c \lvert y\rvert ^{-k} \qquad (x^\prime,x,y)\in K\times L\times
\Gamma^{\prime}_r.
\end{equation}
%The next theorem is a generalization of \cite[Theorem 4.4.8]{MR1996773}.
\begin{Thm}\label{Theorem-M-BVWF}
	Let $F\in \E(\Omega\times U\times \Gamma_r)$ be $\M$-almost analytic in the variables $(x,y)\in U\times\Gamma_r$
	and of slow growth in the variable $y\in\Gamma_r$.
	Then the distributional limit $u$ of the sequence 
	$u_\eps =F(\,.\,,\,.\,,\eps)\in\E(\Omega\times U)$
	exists. We say that $u=b_\Gamma (F)\in\D^\prime(\Omega\times U)$ is the boundary value of $F$.
	Furthermore, we have
	\begin{equation*}
	\WF_\M u\subseteq\,\bigr(\Omega\times U\bigr)\times\bigl(\R^n\times\Gamma^\circ\bigr)
	\end{equation*}
	where $\Gamma^\circ=\{\eta\in\R^d\mid \langle y,\eta\rangle\geq 0 \;\;\forall y\in\Gamma\}$
	is the dual cone of $\Gamma$ in $\R^d$.
\end{Thm}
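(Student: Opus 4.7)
The plan is to follow H\"ormander's strategy for boundary values of almost-analytic functions, with the quantitative $\M$-decay of $\bar\partial F$ producing the $\M$-wavefront estimate. First I would establish existence of the boundary value $u$. Fix $y_0 \in \Gamma$. The slow-growth estimate $|F(\cdot,\cdot,ty_0)| \leq c\,t^{-k}$ ensures that $F(\cdot,\cdot,ty_0)$ converges in $\D'(\Omega \times U)$ as $t \to 0^+$ to a distribution of order at most $k$, via the standard construction through a $(k{+}1)$-fold $y$-primitive (which has continuous boundary values) followed by differentiation. The almost-analyticity only contributes corrections of the form $\bar\partial F = O(h_\M(Q|y|))$, which are rapidly decreasing in $|y|$ and do not obstruct the argument.

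Now consider the microlocal estimate. Let $(\eta_0', \eta_0) \in \R^n \times \R^d$ with $\eta_0 \notin \Gamma^\circ$, and choose $y_0 \in \Gamma$ with $\langle y_0, \eta_0\rangle < 0$; pick an open conic neighborhood $V$ of $(\eta_0', \eta_0)$ such that $\langle y_0, \eta\rangle \leq -c|\eta|$ for $(\eta', \eta) \in V$. For each $N \in \N$, construct $\chi_N \in \D(\Omega \times U)$ equal to $1$ on a fixed neighborhood $V_0$ of $(x_0', x_0)$ and satisfying $|D^\alpha \chi_N| \leq C (cN)^{|\alpha|} m_{|\alpha|}|\alpha|!$ for all $|\alpha| \leq N$; such cutoff families exist for any regular weight sequence. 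The family $u_N := \chi_N u \in \E'(\Omega \times U)$ is then the candidate bounded sequence of Definition~\ref{WF-M Def1}.

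Extend each $\chi_N$ to an $\M$-almost analytic function $\tilde\chi_N$ in the $x$-variable via Theorem~\ref{Dynkin1}, and view $F$ as an almost-holomorphic $\tilde F(x',z)$ with $\imag z \in \Gamma_r$. Then
$$\widehat{u_N}(\eta', \eta) = \lim_{t\to 0^+} \int \chi_N(x',x)\, \tilde F(x',x+ity_0)\, e^{-i(x'\eta' + x\eta)}\,dx'\,dx.$$
A Stokes/contour-shift argument moves the $x$-integration to $x + iRy_0$ for fixed small $R>0$ with $(R+t)y_0 \in \Gamma_r$. The resulting boundary term carries a factor $e^{R\langle y_0, \eta\rangle} \leq e^{-cR|\eta|}$, which is rapidly decreasing in $|\eta|$ and hence harmless. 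The Stokes correction is a $\sigma$-integral over $[0,R]$ of $\bar\partial(\tilde\chi_N \tilde F)$, where both $|\bar\partial\tilde\chi_N|$ and $|\bar\partial\tilde F|$ are bounded by constants times $h_\M(Q\sigma)$, multiplied by $e^{\sigma\langle y_0,\eta\rangle} \leq e^{-c\sigma|\eta|}$. Using $h_\M(Q\sigma) \leq (Q\sigma)^N m_N$ for each $N$, the $\sigma$-integral gives
$$\int_0^R h_\M(Q\sigma)\, e^{-c\sigma|\eta|}\,d\sigma \leq \frac{Q^N m_N N!}{(c|\eta|)^{N+1}},$$
which is precisely the decay required by \eqref{WF-M Estimate1}.

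The main obstacle will be tracking the $N$-dependent constants in the almost-analytic extensions $\tilde\chi_N$, since $\chi_N$ itself has $N$-dependent derivative bounds; these constants must be balanced against the exponential decay $e^{-cR|\eta|}$ from the boundary term so that both pieces yield estimates uniform in $N$ and in $\eta \in V$ with matching constants $Q$ in Definition~\ref{WF-M Def1}.
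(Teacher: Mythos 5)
Your overall strategy is the right one and is essentially the standard route (contour deformation in the $x$-variable only, the exponential gain $e^{\sigma\langle y_0,\eta\rangle}\leq e^{-c\sigma\lvert\eta\rvert}$ on a conic neighbourhood of $(\eta_0',\eta_0)$ with $\eta_0\notin\Gamma^\circ$, and the key estimate $\int_0^\infty h_\M(Q\sigma)e^{-c\sigma\lvert\eta\rvert}\,d\sigma\leq Q^N m_N N!\,(c\lvert\eta\rvert)^{-N-1}$, valid for every $N$ because $h_\M(t)\leq t^N m_N$, which converts the $\bar\partial F$ error into exactly the decay demanded by \eqref{WF-M Estimate1}). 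The existence part via $(k+1)$-fold primitives is also correct. However, there is a genuine gap at the cutoff step, and it is not merely a matter of ``tracking constants'': you cannot have $\lvert\bar\partial\tilde\chi_N\rvert\leq C\,h_\M(Q\sigma)$ with constants $C,Q$ independent of $N$. Theorem \ref{Dynkin1} only applies to functions of class $\E_\M$, and your $\chi_N$ are not of class $\E_\M$ — they only carry derivative bounds up to order $N$. Worse, when $\M$ is quasianalytic (a case this paper explicitly cares about) there exists no nontrivial compactly supported function admitting an $\M$-almost analytic extension with a uniform bound $h_\M(Q\sigma)$, since its boundary value would then be a compactly supported element of $\E_\M$. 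The entire point of H\"ormander's definition via a \emph{bounded sequence} $(u_N)$ of cutoffs with order-$N$ derivative bounds is to circumvent precisely this obstruction, so an argument that restores a single uniformly $\M$-almost analytic cutoff is circular in the non-quasianalytic case and impossible in the quasianalytic one.

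The repair is known but changes the shape of the estimate: either iterate the contour shift $N$ times, absorbing one $x$-derivative of $\chi_N$ per step (each step costs a factor $CN$ from $\lvert D^\alpha\chi_N\rvert\leq(CN)^{\lvert\alpha\rvert}$ and gains a factor $(c\lvert\eta\rvert)^{-1}$ from the $\sigma$-integration, so $N$ steps give $(CN)^N(c\lvert\eta\rvert)^{-N}\leq C'^N N!\,\lvert\eta\rvert^{-N}\leq Q^N m_N N!\,\lvert\eta\rvert^{-N}$), or replace $\tilde\chi_N$ by the order-$N$ Taylor almost-analytic extension, whose $\bar\partial$ is only $O\bigl((CN)^N\sigma^{N-1}/(N-1)!\bigr)$ rather than $O(h_\M(Q\sigma))$, and check that this still integrates to $Q^N m_N N!\,\lvert\eta\rvert^{-N}$; in the latter case you must also shift only to height $R\sim N/\lvert\eta\rvert$ (or otherwise control $\lvert\tilde\chi_N\rvert$ itself, which grows like $e^{CNR}$ at height $R$). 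Two smaller points: the term $\partial_x\chi_N\cdot F$ produced by the deformation is not integrable in $\sigma$ near $0$ when $k\geq1$, so you should first reduce to bounded $F$ using the primitives from your existence step together with Theorem \ref{WF-MProperties}(4); and the decay required by Definition \ref{WF-M Def1} is in the full frequency $\lvert(\eta',\eta)\rvert$, which your $e^{-c\sigma\lvert\eta\rvert}$ delivers only because $\lvert\eta\rvert$ is comparable to $\lvert(\eta',\eta)\rvert$ on a sufficiently small conic neighbourhood of a point with $\eta_0\neq0$ — this should be stated.
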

%\begin{Ex}\label{ExJump}
%Consider the holomorphic function $F(z)=\tfrac{1}{z}$ on $\C\!\setminus\!\{0\}$. 
%It is well known that the boundary values of $F$ onto the real line from above and beneath, commonly denoted by
%\begin{align*}
%\frac{1}{x+i0}&=b_+F=\lim_{y\rightarrow 0+}\frac{1}{x+iy}\\
%\frac{1}{x-i0}&=b_+F=\lim_{y\rightarrow 0+}\frac{1}{x-iy}\\
%\intertext{satisfy the jump relations (c.f.\ e.g.\ Duistermaat-Kolk \cite{MR2680692}), in particular}
%2i\delta &=\frac{1}{x-i0}-\frac{1}{x+i0}.
%\end{align*}
%We have that both $\tfrac{1}{x+i0}$ and $\tfrac{1}{x-i0}$ are real-analytic outside the origin.
%Hence the application of Theorem \ref{Theorem-M-BVWF} together with the jump relations imply that
%\begin{equation*}
%\WF_\M \biggl(\frac{1}{x\pm i0}\biggr)=\{0\}\times\R_\pm.
%\end{equation*}
 %\end{Ex}

%There is a partial converse to the last theorem.
\begin{Thm}\label{BV-M-WF}
Let $\Gamma\subset \R^n$ be an open convex cone and $u\in\D^\prime(\Omega)$
with $\WF_\M u\subseteq\Omega\times\Gamma^\circ$. 
If $V\subset\subset\Omega$ and $\Gamma^\prime$ is an open convex cone 
with $\overline{\Gamma}^\prime\subseteq\Gamma\cup\{0\}$ then there is an $\M$-almost analytic function $F$ on 
$V+i\Gamma^\prime_r$ of slow growth for some $r>0$ such that $u\vert_V=b_{\Gamma^\prime}(F)$
\end{Thm}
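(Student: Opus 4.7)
The plan is to reverse the Fourier-analytic construction behind Theorem \ref{Theorem-M-BVWF}: we start from $u$, cut its frequency spectrum into a piece concentrated near $\Gamma^\circ$ and a piece away from $\Gamma^\circ$, inverse-Fourier-transform the first piece into a function holomorphic on the tube $V+i\Gamma'_r$, and absorb the second piece---which is $\E_\M$ on $V$ by the microlocal hypothesis---into the resulting extension. The weight $h_\M$ enters only when we quantify the $\dbar$-decay of $F$, not merely its smoothness.

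Fix an auxiliary open convex cone $\Gamma''$ with $\bar\Gamma'\subseteq\Gamma''\cup\{0\}$ and $\bar\Gamma''\subseteq\Gamma\cup\{0\}$. Using the assumption $\WF_\M u\subseteq\Omega\times\Gamma^\circ$ together with compactness of $\bar V\times\bigl(S^{n-1}\setminus\mathcal U\bigr)$ for a small open neighbourhood $\mathcal U$ of $\Gamma^\circ\cap S^{n-1}$, I would patch the local data from Definition \ref{WF-M Def1} into a single bounded sequence $(u_N)_{N\in\N_0}\subseteq\E^\prime(\R^n)$ with $u_N\equiv u$ near $\bar V$, and an open cone $\Gamma_1$ with $\Gamma^\circ\setminus\{0\}\subseteq\Gamma_1$ and $\overline{\Gamma_1}\setminus\{0\}$ contained in the interior of $(\Gamma'')^\circ$, such that
\begin{equation*}
|\hat u_N(\xi)|\leq C\,\frac{Q^N m_N\,N!}{|\xi|^N},\qquad\xi\in\R^n\setminus\Gamma_1,\ N\in\N_0,
\end{equation*}
for some $C,Q>0$. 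The patching is the standard covering-plus-convex-combination argument; the Fourier estimate only improves on finite overlaps.

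Next, pick $\phi\in\E(\R^n)$, homogeneous of degree $0$ for $|\xi|\geq 1$, equal to $1$ on $\Gamma^\circ\cap\{|\xi|\geq 2\}$ and supported in $\Gamma_1\cup\{|\xi|\leq 2\}$, and decompose $\phi=\sum_{k\geq 0}\phi_k$ with $\supp\phi_k\subseteq\{2^k\leq|\xi|\leq 2^{k+2}\}$. Define
\begin{equation*}
F(x+iy):=(2\pi)^{-n}\sum_{k\geq 0}\int_{\R^n}e^{i(x+iy)\cdot\xi}\,\phi_k(\xi)\,\hat u_{N_k}(\xi)\,d\xi,\qquad (x,y)\in V\times\Gamma'_r,
\end{equation*}
with $N_k\sim 2^k$ chosen to optimize the Carleman-type estimate. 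For $y\in\Gamma'_r$ the factor $e^{-y\cdot\xi}$ decays like $e^{-c|y||\xi|}$ on $\supp\phi$ (thanks to $\bar\Gamma'\subseteq\Gamma''$ and $\supp\phi\subseteq(\Gamma'')^\circ\cup\{|\xi|\leq 2\}$), so each summand is holomorphic in $z=x+iy$ and of slow growth in $1/|y|$; the summability of the series comes from the $N_k$-dependent bound above. The failure of holomorphicity of $F$ arises solely from the telescoping in $k$: $\dbar F$ reduces to a series in the differences $\hat u_{N_k}-\hat u_{N_{k+1}}$ on the overlaps of consecutive $\phi_k$, and the frequency bound combined with $h_\M(t)=\inf_N t^N m_N$ translates this into the required $|\dbar F(x+iy)|\leq Ch_\M(Q|y|)$.

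To identify $u|_V$ with $b_{\Gamma'}(F)$, I would let $y\to 0$ in $\Gamma'$ and observe that $F(\cdot+iy)$ converges in $\D^\prime(V)$ to the inverse Fourier transform of $\phi\,\hat u_N$ (for $N$ depending on the test function), which differs from $u|_V$ only by the inverse Fourier transform of $(1-\phi)\hat u_N$; the estimate on $\hat u_N$ outside $\Gamma_1$ together with a second application of Theorem \ref{Dynkin1} shows this remainder is in $\E_\M(V)$ and can be absorbed as a genuinely holomorphic summand. The main technical obstacle is the careful bookkeeping in the dyadic construction to recover the sharp decay $h_\M(Q|y|)$ of $|\dbar F|$ rather than only smoothness: one has to choose $N_k$ so that the optimization of $(Q|y|)^{N}m_N$ over $N$ is essentially realized term-by-term, which requires controlling the combinatorial $N!$ appearing in the Fourier bound by means of \eqref{derivclosed} and \eqref{stlogconvex}.
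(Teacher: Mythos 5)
The paper itself does not prove Theorem \ref{BV-M-WF}; it is quoted from \cite{Fuerdoes1}, so I can only judge your argument on its own terms. Your overall strategy --- a conic Fourier cutoff near $\Gamma^\circ$ producing a function holomorphic in the tube, plus a remainder handled by Dyn'kin's Theorem \ref{Dynkin1} --- is the standard and, I believe, correct route. However, the mechanism you propose for the $\dbar$-defect of $F$ cannot work. Each summand $\int e^{i(x+iy)\cdot\xi}\phi_k(\xi)\hat u_{N_k}(\xi)\,d\xi$ is an \emph{entire} function of $z=x+iy$ (the integration is over a compact dyadic shell), and your own bounds show the series converges locally uniformly on $V+i\Gamma'_r$; by Weierstrass the sum is therefore holomorphic there and $\dbar F\equiv 0$ --- no telescoping in the differences $\hat u_{N_k}-\hat u_{N_{k+1}}$ produces an error term. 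The genuine source of the $h_\M(Q\lvert y\rvert)$-bounded $\dbar$ is precisely the piece you propose to ``absorb as a genuinely holomorphic summand'': the remainder $u\vert_V-b_{\Gamma'}(F)$ is at best an $\E_\M$ function, and its Dyn'kin extension is only $\M$-almost analytic, not holomorphic. You have the two roles exactly interchanged.

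The second, and more serious, gap is that you assert without proof that this remainder lies in $\E_\M(V)$. For a single fixed $N$ the function $\bigl((1-\phi)\hat u_N\bigr)^\vee$ is only finitely differentiable; to obtain $\E_\M$ regularity one must use the whole sequence, and the functions $\bigl((1-\phi)\hat u_N\bigr)^\vee$ differ for different $N$ because the $u_N$ agree with $u$ only near $\overline{V}$. Concretely, $u\vert_V-\sum_k(\phi_k\hat u_{N_k})^\vee\vert_V=\bigl((1-\phi)\hat u_{N_0}\bigr)^\vee\vert_V+\sum_k\bigl(\phi_k(\hat u_{N_0}-\hat u_{N_k})\bigr)^\vee\vert_V$, and the second (nonlocal) sum is controlled in the $\E_\M$ topology only if the $\phi_k$ carry Ehrenpreis-type derivative bounds $\lvert D^\alpha\phi_k\rvert\le (CN_k2^{-k})^{\lvert\alpha\rvert}$ for $\lvert\alpha\rvert\le N_k$; a smooth dyadic decomposition of a fixed $\phi\in\E(\R^n)$ is not enough, since $\check\phi_k$ then decays too slowly away from $\overline{V}$. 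Alternatively one could invoke part (2) of Theorem \ref{WF-MProperties} for $u-b_{\Gamma'}(F)$, but that requires locating $\WF_\M\bigl(b_{\Gamma'}(F)\bigr)$ inside $V\times\overline{\Gamma}_1$, which Theorem \ref{Theorem-M-BVWF} does not give (it only yields $V\times(\Gamma')^\circ$). Relatedly, patching the pointwise microlocal data into a single bounded sequence valid outside one cone is not merely ``convex combinations on finite overlaps'': the sequences attached to different covectors are a priori unrelated, and one needs the $\M$-analogue of H\"ormander's lemma on multiplying bounded sequences by such quantified cutoffs. These are the points a complete write-up must supply.
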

Using Theorem \ref{Theorem-M-BVWF} and Theorem \ref{BV-M-WF} we were able in \cite{Fuerdoes1} to show the characterization of the ultradifferentiable wavefront
set by $\M$-almost analytic extensions.
\begin{Cor}\label{WF-M-localdescr1}
	Let $u\in\D^{\prime}(\Omega)$ and $(x_0,\xi_0)\in\Omega\times\R^n\!\setminus\!\{0\}$. 
	Then $(x_0,\xi_0)\notin\WF_\M u$ if and only if there are a neighbourhood $V$ of $x_0$,
	open convex cones $\Gamma_1,\dots,\Gamma_N$ with the properties 
	$\xi_0\Gamma_j<0$, $j=1,\dots N$ and $\Gamma_j\cap\Gamma_k=\emptyset\text{ for}\;j\neq k$,
	and $\M$-almost analytic functions $h_j$ on $V+i\Gamma_{r_j}$, $r_j>0$, of slow growth
	such that
	\begin{equation*}
	u\vert_V=\sum_{j=1}^N b_{\Gamma_j}(h_j)
	\end{equation*}
\end{Cor}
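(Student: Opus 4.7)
For the \emph{if} direction, Theorem \ref{Theorem-M-BVWF} applied to each summand yields
$\WF_\M(b_{\Gamma_j}(h_j)) \subseteq V \times \Gamma_j^\circ$. The hypothesis
$\xi_0 \cdot y < 0$ for every $y \in \Gamma_j \setminus \{0\}$, together with the openness of $\Gamma_j$,
puts $-\xi_0$ in the interior of the closed convex cone $\Gamma_j^\circ$, so some open conic
neighbourhood of $\xi_0$ is disjoint from $\Gamma_j^\circ$ for every $j$. Subadditivity of $\WF_\M$
under finite sums, immediate from Definition \ref{WF-M Def1} by concatenating the defining bounded
sequences, then gives $(x_0, \xi_0) \notin \WF_\M u$.

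For the converse, my plan is the classical microlocal decomposition argument, adapted to the
Denjoy--Carleman setting. Since $\WF_\M u$ is closed and conic (Theorem \ref{WF-MProperties}), first
choose an open neighbourhood $V \subset\subset \Omega$ of $x_0$ and a closed convex conic neighbourhood
$\Xi$ of $\xi_0$ in $\R^n \setminus \{0\}$ with $(V \times \Xi) \cap \WF_\M u = \emptyset$. Cover
$S^{n-1} \setminus \mathrm{int}\,\Xi$ by finitely many small closed spherical caps, and let
$K_1, \ldots, K_N$ be the corresponding closed convex cones in $\R^n \setminus \{0\}$. For each $K_j$,
select an open convex cone $\Gamma_j \subseteq \{y : \xi_0 \cdot y < 0\}$ whose dual $\Gamma_j^\circ$
contains a conic neighbourhood of $K_j$; concretely, one may take $\Gamma_j$ as a thin open cone around
a ray $\R_{>0} w_j$ with $-w_j$ in the relative interior of $K_j$. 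After a further shrinking the
$\Gamma_j$ may be assumed pairwise disjoint while preserving
$\bigcup_{j=1}^N \Gamma_j^\circ \supseteq \R^n \setminus \mathrm{int}\,\Xi$.

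Third, fix a cut-off $\varphi \in \D(V)$ equal to $1$ near $x_0$ and pick smooth functions
$\psi_0, \psi_1, \ldots, \psi_N$ on $\R^n \setminus \{0\}$, homogeneous of degree zero outside a compact
set, with $\sum_{j=0}^N \psi_j \equiv 1$ there, $\psi_0$ supported in $\mathrm{int}\,\Xi$, and each
$\psi_j$ ($j \geq 1$) supported in a conic neighbourhood of $K_j$ contained in
$\mathrm{int}\,\Gamma_j^\circ$. Setting $u_j := \psi_j(D)(\varphi u)$, the Fourier criterion of
Definition \ref{WF-M Def1} gives $\WF_\M u_0 \subseteq V \times \Xi$ near $x_0$, which combined with
$\WF_\M u \cap (V \times \Xi) = \emptyset$ shows via Theorem \ref{WF-MProperties}(2) that $u_0$ is
ultradifferentiable of class $\{\M\}$ near $x_0$; similarly $\WF_\M u_j \subseteq V' \times \Gamma_j^\circ$
on some $V' \ni x_0$ for $j \geq 1$. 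Theorem \ref{BV-M-WF} then yields $\M$-almost analytic functions
$h_j$ of slow growth on $V' + i\Gamma_{j, r_j}$ with $u_j|_{V'} = b_{\Gamma_j}(h_j)$, and the
ultradifferentiable piece $u_0$ is absorbed into $h_1$ by adding its $\M$-almost analytic extension
furnished by Corollary \ref{CharMalmostanalytic}.

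The main obstacle is the wavefront-set behaviour of the Fourier multipliers $\psi_j(D)$. Their symbols
are only smooth, so the containments $\WF_\M(\psi_j(D) v) \subseteq V \times \mathrm{supp}\,\psi_j$
cannot be deduced from a $\{\M\}$-pseudodifferential calculus based on compactly supported
ultradifferentiable cut-offs, which in the quasianalytic regime do not exist at all. This step has to be
verified directly from the bounded-sequence formulation of Definition \ref{WF-M Def1}, or equivalently
through the $\M$-almost analytic machinery of \cite{Fuerdoes1}; it is the technical core of the argument,
and once granted, the assembly outlined above completes the proof.
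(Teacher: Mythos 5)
Your ``if'' direction is correct and is exactly what the paper intends: Theorem \ref{Theorem-M-BVWF} gives $\WF_\M\bigl(b_{\Gamma_j}(h_j)\bigr)\subseteq V\times\Gamma_j^\circ$, the condition $\xi_0\Gamma_j<0$ forces $\xi_0\notin\Gamma_j^\circ$ (picking any $y\in\Gamma_j$ already shows this; the detour through ``$-\xi_0$ in the interior'' is unnecessary), and since each $\Gamma_j^\circ$ is closed and conic, a conic neighbourhood of $\xi_0$ misses all of them; subadditivity of $\WF_\M$ under finite sums, which does follow from Definition \ref{WF-M Def1} by adding the bounded sequences and intersecting the cones, finishes it.

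The converse, however, has a genuine gap, and it sits exactly where you say it does. The whole content of this direction beyond Theorem \ref{BV-M-WF} is the conic microlocal decomposition: one must produce $u=\sum_j u_j$ near $x_0$ with $\WF_\M u_j$ contained in $V'\times F_j$ for prescribed closed convex cones $F_j$ avoiding $\xi_0$. You propose $u_j=\psi_j(D)(\varphi u)$ with merely smooth, homogeneous symbols $\psi_j$ and then assert the containments $\WF_\M u_j\subseteq V'\times\supp\psi_j$ ``to be verified directly from the bounded-sequence formulation.'' This is not a routine verification: $\psi_j(D)(\varphi u)$ is no longer compactly supported, so one must re-localize with a bounded sequence of cutoffs $\chi_N$ and control the convolutions $\hat{\chi}_N\ast(\psi_j\widehat{\varphi u})$ well enough to recover the decay $Q^Nm_NN!\,\lvert\xi\rvert^{-N}$ outside $\supp\psi_j$; moreover a smooth symbol does not act on $\E_\M$ (already $\psi_0(D)$ applied to a real-analytic function need not be real-analytic), so the claim that $u_0$ is of class $\{\M\}$ near $x_0$ requires an argument that uses the Dyn'kin-type cutoffs with derivative bounds of order $N$ rather than any $\{\M\}$-pseudodifferential calculus -- which, as you note, cannot exist in the quasianalytic case. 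Since you explicitly leave this step ``to be granted,'' the hard half of the corollary is not proved. For what it is worth, the paper itself does not prove Corollary \ref{WF-M-localdescr1} in-text either: it is quoted from \cite{Fuerdoes1}, and the content of that reference beyond Theorems \ref{Theorem-M-BVWF} and \ref{BV-M-WF} is precisely the decomposition you have postponed. Your outline has the right architecture (including the absorption of the regular piece $u_0$ via Corollary \ref{CharMalmostanalytic}), but as submitted it establishes only the easy implication.
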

In \cite{Fuerdoes1} Corollary \ref{WF-M-localdescr1} is then applied to show the following Theorem.
\begin{Thm}
	Let $F$ be an $\E_\M$-diffeomorphism then
	\begin{equation*}
	\WF_\M \bigl(F^*u\bigr) =F^*\bigl(\WF_\M u\bigr).
	\end{equation*}
\end{Thm}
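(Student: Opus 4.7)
The plan is to use the characterization of $\WF_\M$ by $\M$-almost analytic boundary-value representations from Corollary \ref{WF-M-localdescr1}. Since the statement is local and $F^{-1}\in\E_\M$ by Theorem \ref{CMStability}(3), it suffices by symmetry (apply the result with $F$ replaced by $F^{-1}$ and $u$ by $F^*u$) to prove one inclusion, say $\WF_\M(F^*u) \subseteq F^*(\WF_\M u)$.

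Fix $(x_0,\xi_0) \notin F^*(\WF_\M u)$, so $(F(x_0), \eta_0) \notin \WF_\M u$ with $\eta_0 = (\transp{F'(x_0)})^{-1}\xi_0$. Applying Corollary \ref{WF-M-localdescr1} yields a neighbourhood $V_0$ of $F(x_0)$, pairwise disjoint open convex cones $\Gamma_1,\dots,\Gamma_N$ with $\eta_0\cdot \Gamma_j < 0$, and $\M$-almost analytic functions $h_j$ of slow growth on $V_0 + i(\Gamma_j)_{r_j}$ such that $u|_{V_0} = \sum_j b_{\Gamma_j}(h_j)$. Set $\Gamma_j' := F'(x_0)^{-1}\Gamma_j$; then for every $v\in\Gamma_j'$ one has $\xi_0\cdot v = \langle \eta_0, F'(x_0)v\rangle < 0$, giving precisely the sign condition required at $(x_0,\xi_0)$. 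By continuity of $F'$ we may choose a neighbourhood $V$ of $x_0$ and subcones $\Gamma_j''\subset\subset\Gamma_j'$ with $F'(x)\Gamma_j''\subset\subset\Gamma_j$ uniformly for $x\in V$.

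Extend $F$ to an $\M$-almost analytic mapping $\tilde F$ on $V + iB(0,\rho)$ via Corollary \ref{CharMalmostanalytic}. For $z = x+iy$ with $y \in (\Gamma_j'')_r$ and $r$ small, Taylor expansion in $y$ gives $\tilde F(z) = F(x) + iF'(x)y + R(x,y)$ with $|R(x,y)| = O(|y|^2)$, so $\imag\tilde F(z)\in\Gamma_j$ and $|\imag \tilde F(z)|$ is comparable to $|y|$; hence $H_j := h_j\circ\tilde F$ is well defined on $V+i(\Gamma_j'')_r$. Treating $h_j$ as a smooth function of target coordinates $(w,\bar w)$ and applying the chain rule, $\bar\partial_z H_j$ is a sum of terms each containing either a factor $\bar\partial_z\tilde F_k$ or $(\partial h_j/\partial \bar w_k)\circ\tilde F$. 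The former decays like $h_\M(Q_1|y|)$ by almost-analyticity of $\tilde F$, and the latter is bounded by $Ch_\M(Q_2|\imag\tilde F(z)|) \leq C'h_\M(Q_2'|y|)$ by almost-analyticity of $h_j$. The remaining slow-growth factors from $\partial_w h_j$ contribute at worst a polynomial $|y|^{-k}$, which is absorbed by the superpolynomial decay of $h_\M$ using the moderate-growth condition \eqref{derivclosed} (yielding an inequality of the form $t^{-k}h_\M(Qt) \leq C_k h_\M(Q't)$). Slow growth of $H_j$ itself is inherited from that of $h_j$ and the local boundedness of $\tilde F$.

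Finally, for $\epsilon'\in\Gamma_j''$ small, $\tilde F(x+i\epsilon')$ approaches $F(x)$ from inside the wedge $V_0 + i\Gamma_j$, and the almost-analytic remainder contributes negligibly to the distributional limit, so $b_{\Gamma_j''}(H_j) = F^*(b_{\Gamma_j}(h_j))$. Summing yields $F^*u|_V = \sum_j b_{\Gamma_j''}(H_j)$, and applying Corollary \ref{WF-M-localdescr1} in reverse produces $(x_0,\xi_0)\notin\WF_\M(F^*u)$, completing the inclusion. The main technical obstacle is the chain-rule estimate for $\bar\partial_z H_j$: one must simultaneously absorb the slow-growth factors coming from $\partial_w h_j$ into a bound of the form $h_\M(Q|y|)$ and convert the bound $h_\M(Q|\imag \tilde F(z)|)$ back to a bound in $|y|$ using the linear comparability of $\imag\tilde F(z)$ with $F'(x)y$.
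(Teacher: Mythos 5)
Your proof follows exactly the route the paper indicates: the theorem is quoted from \cite{Fuerdoes1}, where it is proved precisely by applying Corollary \ref{WF-M-localdescr1}, i.e.\ by pulling back the $\M$-almost analytic boundary-value decomposition through an $\M$-almost analytic extension of $F$ after transporting the cones by $F'(x_0)^{-1}$, so your argument is essentially the paper's. The only two steps you should make explicit are that the derivatives $\partial_w h_j$ of a slow-growth $\M$-almost analytic function are themselves of slow growth (the definition \eqref{temperategrowth} only controls $h_j$ itself; one obtains the derivative bound via Cauchy--Pompeiu on discs of radius comparable to $\lvert\imag w\rvert$), and that $b_{\Gamma_j''}(h_j\circ\tilde F)=F^*\bigl(b_{\Gamma_j}(h_j)\bigr)$, which needs the independence of the boundary value from the (nontangential) path of approach; both are standard but not immediate from the definitions as stated.
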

	Hence if $M$ is an $\E_\M$-manifold and $u\in\D^{\prime}(M)$ we can define $\WF_{\M} u$ invariantly
	as a subset of $T^{\ast}M\!\setminus\!\{0\}$, c.f.\ \cite{Fuerdoes1}.
	We refer to \cite{MR0516965} or \cite{MR678605} for the definition of distributions on manifolds, either scalar or with values in vector bundles.
	Let $u$ be a distribution on an ultradifferentiable manifold $M$ of class $\{\M\}$ 
with values in an $\E_{\M}$-vector bundle over $M$. 
In particular we can write locally $u\vert_V=\sum_{j=1}^N u_j\omega^j$,
where $V\subseteq M$ is an open subset of $M$, scalar-valued distributions $u_j\in\D^\prime(V)$ and the sections $\omega^1,\dotsc,\omega^N\in\E_{\M}(V,E\vert_V)$
constitute a local basis of $\E_\M(M,E)$.
The ultradifferentiable wavefront set of $u$ is then defined locally by
\begin{equation*}
\WF_{\M}u=\bigcup_{j=1}^N \WF_{\M}u_j.
\end{equation*}

We close this section by recalling the last fact that we need from \cite{Fuerdoes1},
the elliptic regularity theorem for partial differential operators 
with ultradifferentiable coefficients.
In order to state it correctly we have to recall again some notations from 
\cite{Fuerdoes1}, for more details on the constructions see \cite{MR678605}.
To begin with if 
\begin{equation*}
Q(x,D)=\sum_{\lvert\alpha\rvert\leq m}q_\alpha(x)D^\alpha
\end{equation*}
is a partial differential opertator on $\Omega$, i.e.\ $q_\alpha\in\E(\Omega)$, of order $\leq m$ then its principal symbol
\begin{equation*}
q(x,\xi)=\sum_{\lvert\alpha\rvert=m}q_\alpha(x)\xi^\alpha
\end{equation*}
is a smooth function on $T^\ast\Omega$ that is homogeneous of degree $m$ in the second variable.
Let $M$ be an $\E_{\M}$-manifold and $E$ and $F$ two ultradifferentiable 
vector bundles of class $\{\M\}$ over $M$ with the same fiber dimension $\nu$.
An ultradifferentiable differential operator  
$P:\;\E_\M(M, E)\rightarrow\E_\M(M,F)$ of class $\{\M\}$ and order $\leq m$
is given locally in some trivialization by
\begin{equation*}
Pu=
\begin{pmatrix}
P_{11} &\cdots & P_{1\nu}\\
\vdots & \ddots & \vdots\\
P_{\nu 1}&\cdots & P_{\nu\nu}
\end{pmatrix}
\begin{pmatrix}
u_1\\
\vdots\\
u_\nu
\end{pmatrix},
\end{equation*}
where the $P_{jk}$ are partial differential operators with ultradifferentiable
coefficients of order $\leq m$ defined on some chart neighbourhood.
The operator $P$ is of order $m$ if it is not of order $\leq m-1$.
The principal symbol $p$ of $P$ is an ultradifferentiable mapping on $T^\ast M$
with values in the fiber-linear maps from $E$ to $F$, that is given locally by
\begin{equation*}
p(x,\xi)=\begin{pmatrix}
p_{11}(x,\xi)&\dots&p_{1\nu}(x,\xi)\\
\vdots&\ddots &\vdots\\
p_{\nu 1}(x,\xi)&\dots &p_{\nu\nu}(x,\xi)
\end{pmatrix}
\end{equation*}
where $p_{jk}$ is the principal symbol of $P_{jk}$.
The operator $P$  is not characteristic (or non-characteristic) 
at a point $(x,\xi)\in T^\ast M\!\setminus\!\{0\}$ if $p(x,\xi)$ 
is an invertible  linear mapping. The set of all characteristic points
is defined by
\begin{equation*}
\Char P=\{(x,\xi)\in T^{\ast}M\!\setminus\!\{0\}\,\colon P\text{ is characteristic at }(x,\xi)\}.
\end{equation*}
After this lengthy preparation we are able to state the elliptic regularity theorem 
for  partial differential operators between ultradifferentiable 
vector bundles.
\begin{Thm}\label{elliptic-regThm}
	Let $M$ be an $\E_{\M}$-manifold and $E,F$ two ultradifferentiable vector bundles on $M$ of the same 
	fiber dimension.
	If $P(x,D)$ is a differential operator between $E$ and $F$ with $\E_\M$-coefficients
	and $p$ its principal symbol, then
	\begin{equation}\label{elliptic-regEq}
	\WF_{\M}u\subseteq \WF_\M (Pu)\cup \mathrm{Char\,}P\qquad u\in\D^{\prime}(M,E).
	\end{equation}
\end{Thm}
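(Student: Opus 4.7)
The plan is to localize, trivialize, and reduce the matrix statement to the scalar elliptic regularity theorem for ultradifferentiable wavefront sets due to H\"ormander.

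\textbf{Step 1: Local trivialization.} Because $\WF_\M$ is invariantly defined on $\E_\M$-manifolds and the vector bundles $E, F$ are of class $\{\M\}$, the assertion \eqref{elliptic-regEq} is local. Fix a point $(x_0, \xi_0) \notin \WF_\M(Pu) \cup \Char P$. Choose a chart around $x_0$ over which both $E$ and $F$ admit $\E_\M$-trivializations; by Remark \ref{flatness} these local bases are also smooth bases and hence also trivialize $E$ and $F$ as smooth bundles. In such trivializations $u$ becomes a vector $(u_1,\dotsc,u_\nu)$ of scalar distributions, $P$ becomes a matrix $(P_{jk})$ of scalar differential operators with coefficients in $\E_\M$, and, by the local definition of the wavefront set for bundle-valued distributions,
\begin{equation*}
\WF_\M u \;=\;\bigcup_{j=1}^{\nu}\WF_\M u_j.
\end{equation*}

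\textbf{Step 2: Cofactor reduction to the scalar case.} Regarding $P$ as a matrix with entries in the (non-commutative) ring of $\E_\M$-differential operators, form the adjugate $Q=(Q_{jk})$ in the standard way, so that $Q$ has entries of order at most $(\nu-1)m$ with $\E_\M$-coefficients and such that $QP$ has principal symbol $\det p(x,\xi)\cdot I_\nu$. That the coefficients of $Q$ and of $QP$ lie in $\E_\M$ follows from Theorem \ref{CMStability}, since $\E_\M$ is closed under the algebraic operations used in composing differential operators; the principal-symbol identity is the usual symbol calculus and uses nothing specific to the ultradifferentiable setting.

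\textbf{Step 3: The scalar operator $\det P$ is non-characteristic at $(x_0,\xi_0)$.} By hypothesis $p(x_0,\xi_0)$ is invertible, hence $\det p(x_0,\xi_0)\neq 0$. Applying $Q$ to the equation $Pu=f$ yields $(QP)u_j = (Qf)_j$ for each component $u_j$, where by Theorem \ref{WF-MProperties}(4) one has
\begin{equation*}
\WF_\M (Qf)_j \;\subseteq\;\WF_\M f \;=\;\WF_\M(Pu),
\end{equation*}
so in particular $(x_0,\xi_0)\notin\WF_\M (QPu_j)$. Moreover the scalar operator $\det P$ whose principal symbol is $\det p$ is non-characteristic at $(x_0,\xi_0)$.

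\textbf{Step 4: Scalar elliptic regularity.} Apply the scalar ultradifferentiable elliptic regularity theorem of H\"ormander (\cite{MR1996773}, Theorem~8.6.1) to each $u_j$, viewed as a solution of the scalar non-characteristic equation from Step 3. It gives $(x_0,\xi_0)\notin\WF_\M u_j$ for every $j$, and taking the union over $j$ yields $(x_0,\xi_0)\notin\WF_\M u$, which proves \eqref{elliptic-regEq}.

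\textbf{Where the work sits.} The geometric and algebraic reduction is essentially formal once one knows that $\E_\M$ is stable under composition, so the substantive content lies entirely in the scalar elliptic regularity theorem and its proof via the Fourier-analytic definition of $\WF_\M$. The main technical obstacle, if one wishes to give a self-contained argument, is the construction, for a non-characteristic scalar $L$, of a bounded sequence $(u_N)\subseteq\E'$ representing $u$ near $x_0$ whose Fourier transforms satisfy the estimate \eqref{WF-M Estimate1} in a conic neighbourhood of $\xi_0$, starting from such a sequence for $Lu$; this is carried out in \cite{MR1996773} by a careful cutoff/parametrix argument that exploits the regularity \eqref{derivclosed} of the weight sequence $\M$, and no new ingredient beyond what is cited above is needed for the matrix version.
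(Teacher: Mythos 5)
The paper does not prove this theorem; it recalls it from \cite{Fuerdoes1}, so your argument can only be judged on its own merits. Steps 1 and 2 are fine: localizing, trivializing, and forming a cofactor matrix $Q$ of differential operators so that $QP$ has principal symbol $\det p(x,\xi)\cdot I_\nu$ is legitimate, since principal symbols are multiplicative under composition and $\E_\M$ is stable under the relevant algebraic operations.

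The gap is in Step 3. The identity for $QP$ holds only at the level of principal symbols: $QP$ is a full $\nu\times\nu$ matrix of operators whose off-diagonal entries are of order $\leq \nu m-1$ but are not zero in general. Your displayed equation ``$(QP)u_j=(Qf)_j$'' silently treats $QP$ as diagonal; the correct statement is
\begin{equation*}
L\,u_j \;=\; (Qf)_j \;-\; \sum_{k} R_{jk}u_k ,
\end{equation*}
where $L$ is a scalar operator with principal symbol $\det p$ and the $R_{jk}$ are operators of order $\leq \nu m-1$ coupling all components. Applying the scalar theorem to this equation only yields $\WF_\M u_j\subseteq \WF_\M(Pu)\cup\bigcup_k\WF_\M u_k\cup\Char P$, which is vacuous. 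In the $\CC^\infty$ category one closes this by a gain-of-one-derivative iteration in Sobolev scales, but for $\WF_\M$ the conclusion requires uniform control of the constants $C,Q$ in the estimate \eqref{WF-M Estimate1} across all steps of the iteration, which is not automatic and amounts to redoing the proof of H\"ormander's Theorem~8.6.1 rather than citing it. The natural repair is to run that proof directly for the determined system: at a point where $p(x_0,\xi_0)$ is invertible one works with the matrix $p(x,\xi)^{-1}$, whose entries are again of class $\{\M\}$ by the inverse-closedness in Theorem \ref{CMStability}(1), and carries out the cutoff/iteration argument with matrix-valued symbols. As written, your reduction to decoupled scalar equations does not go through.
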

\section{CR Manifolds of Denjoy Carleman type}\label{sec:ultraCR}
In this section we rapidly recall the basic definitions of CR geometry, for more details see \cite{MR1668103}.
We begin with the embedded case. Let $M\subseteq\C^N$ be a real submanifold of $\C^N$,
then $T_pM\subseteq T_p\C^N$ ($p\in M$) as real vector spaces, but $T_p\C^N=\R^{2N}\cong \C^N$ inherits
a complex structure from $\C^N$. Hence there is a maximal complex subspace $T^c_pM$ of $T_p\C^N$ such that $T_p^cM\subseteq T_pM\subseteq T_p\C^N$.

\begin{Def}
	A submanifold $M\subseteq\C^N$ is said to be CR if the mapping 
	\begin{equation*}
	M\ni p\longmapsto \dim_\C T^c_pM
	\end{equation*}
	is constant. The CR dimension of $M$ is then defined as $\dim_{C\! R} M:=\dim_\C T^c_pM$.
\end{Def}
Note that any real hypersurface $M\subseteq\C^N$ is CR. 
An arbitrary submanifold $M\subseteq \C^N$ of codimension $d$ is said to be
\emph{generic} iff it can be realized as the intersection of $d$ real hypersurfaces 
whose complex tangent spaces are in general position as complex vector spaces. 
The manifold $M$ is said to be generic at a point $p\in M$ iff there is a neighbourhood $U$ of $p$ in $\C^N$
such that $M\cap U$ is generic.
We recall that if $M\subseteq\C^N$ is a generic submanifold of CR dimension $n$ 
and real codimension $d$ then $n+d=N$.

It is easy to see that for a CR manifold $M$ we can consider the complex tangent bundle $T^cM\subseteq TM$.
However the complex tangent bundle, although being a vector bundle over $\C$, is realized as a subbundle of
the real bundle $TM$. Often it is more convenient to take a different approach for the definition of CR manifolds.
For this end consider the complexified tangent bundle $\C TM=\C\otimes TM$ 
of a manifold $M\subseteq \C^N$.
Furthermore let $p\in M$ and set $\C T_p\C^N= T^{1,0}_p\C^N\oplus T^{0,1}_p\C^N$.
If $z_j=x_j+iy_j$, $j=1,\dotsc, N$ denote the  coordinates of $\C^N$ 
then the spaces $T^{1,0}_p\C^N$ and $T^{0,1}_p\C^N$ 
are generated by $\partial /\partial z_j\vert_p$ and
$\partial /\partial \bar{z}_j\vert_p$, $j=1,\dotsc, N$, respectively.
If we set $\crb_p=\C T_pM\cap T^{0,1}_p \C^N$ then $\dim_\C\crb_p=\dim_\C T^c_p M$.
If $M$ is a CR submanifold, then $\crb=\bigsqcup_p\crb_p$ is said to be the CR bundle associated to $M$.
It is easy to see that $\crb$ is involutive, i.e.\ $[\crb,\crb]\subseteq \crb$, and $\crb\cap\bar{\crb}=\{0\}$.
Using this it is possible to generalize the notion of CR manifold.
\begin{Def}
	Let $M$ be a manifold (not necessarily embedded) and $\crb\subseteq \C TM$ a subbundle.
	We say that $(M,\crb)$ is an abstract CR manifold iff $\crb$ is an involutive bundle and
	$\crb\cap\bar{\crb}=\{0\}$. The CR dimension of $M$ is defined as $\dim_{C\! R}M=\dim \crb$.
	If $\dim_\R M=m+n$ then the CR codimension is given by $d=m-n$.
\end{Def}
If $M$ is a CR manifold of class $\{\M\}$ then a CR vector field $L$ is an ultradifferentiable section of $\crb$,
i.e.\ $L\in\E_\M(M,\crb)$.
If $p\in M$ and $n=\dim_{C\! R} M$ then
a local basis of CR vector fields near $p$ consists of $n$ CR vector fields $L_1,\dotsc, L_n$ defined near $p$
that are linearly independent. 
We also set $L^\alpha=L_1^{\alpha_1}\dotsb L_n^{\alpha_n}$
for $\alpha\in\N_0^n$.

A CR function or CR distribution is a function or distribution on $M$ that is annihilated by all CR vector fields.
We refer to $T^\prime M:=\crb^\perp$ as the holomorphic cotangent bundle.
$T^\prime M$ is a complex vector bundle on $M$ with fiber dimension $N=n+d$.
Its ultradifferentiable sections are called holomorphic forms.
The real subbundle $T^0M\subseteq T^\prime M$ that consists of the real dual vectors that vanish on 
$\crb \oplus\bar{\crb}$ is called the characteristic bundle of $M$ 
and its sections of class $\{\M\}$ are the characteristic forms on $M$.
Note that if $L$ is a CR vector field, we have generally that $\Char L\subseteq T^0 M$, hence we obtain 
for any CR distribution $u$ that
$\WF_\M u\subseteq T^0M$.

A $\CC^1$-mapping $H$ between two CR manifolds $(M,\crb)$ and $(M^\prime,\crb^\prime)$ is CR iff
for all $p\in M$ we have $H_\ast(\crb_p)\subseteq \crb^\prime_{H(p)}$.
Here $H_\ast$ denotes the tangent map of $H$. 
If $M^\prime\subseteq\C^{N^\prime}$ is an embedded CR submanifold and 
$Z^\prime=(Z^\prime_1,\dotsc, Z^\prime_{N^\prime})$ some set of local holomorphic coordinates in
$\C^{N^\prime}$ then $H_j= Z^{\prime}_j\circ H$, $1\leq j\leq N^\prime$ is a CR function 
on the CR manifold $M$ for all $1\leq j\leq N^\prime$.

We continue with a first look at specific results about ultradifferentiable CR manifolds.
\begin{Prop}\label{genericcoordinates}
	Let $M\subseteq\C^N$ be a generic manifold of class $\{\M\}$ of codimension $d$ and $p_0\in M$. 
	If $n$ denotes the CR dimension of $M$ then there are holomorphic coordinates 
	$(z,w)\in\C^n\times\C^d$ defined near $p_0$ that vanish 
	at $p_0$ and a function $\varphi\in\E_\M(U\times V,\R^d)$ defined on a neighbourhood 
	$U\times V$ of the origin in $\R^{2n}\times\R^d$ with $\varphi(0)=0$ and $\nabla \varphi(0)=0$,
	such that near $p_0$ the manifold $M$ is given by
	\begin{equation}\label{definingequation}
	\imag w =\varphi (z,\bar{z},\real w).
	\end{equation}
\end{Prop}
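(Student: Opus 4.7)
The proof will parallel the standard derivation for smooth or real-analytic generic CR submanifolds (c.f.\ \cite{MR1668103}); the sole novelty in the ultradifferentiable setting is that the coordinate changes and the implicit function theorem must be carried out inside the $\E_\M$-category, which is guaranteed by Theorem \ref{CMStability}.

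First I would translate so that $p_0=0$. Since $M$ is generic of codimension $d$ and CR dimension $n$, one has $n+d=N$. I pick any real complement $H$ of $T^c_0 M$ in $T_0 M$; because $H\subseteq T_0M$ and $H\cap T^c_0 M=\{0\}$, a short argument shows $H\cap JH=\{0\}$, so $H+JH$ is a complex $d$-plane complementary to the complex $n$-plane $T^c_0 M$ in $T_0\C^N$. Choosing a complex basis of $T^c_0 M$ and a real basis $e_1,\dotsc,e_d$ of $H$, and declaring these to be $\partial/\partial z_j|_0$ and $\partial/\partial u_k|_0$ (with $\partial/\partial v_k|_0:=Je_k$ and $w_k=u_k+iv_k$), I obtain holomorphic coordinates $(z,w)\in\C^n\times\C^d$ vanishing at $p_0$ for which $T^c_0 M=\{w=0\}$ and $T_0 M$ equals the real $(z,\bar z,u)$-subspace.

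Next, by the definition of genericity, $M$ is cut out near $0$ by real $\E_\M$-functions $\rho_1,\dotsc,\rho_d$ with $\R$-linearly independent differentials at the origin. Since each $d\rho_j$ annihilates $T_0 M$, the $d\times d$ matrix $(\partial\rho_j/\partial v_k)(0)$ is invertible; after a real-linear recombination of the $\rho_j$'s (which preserves $\E_\M$-regularity) I may assume this matrix equals the identity. Applying the $\E_\M$-implicit function theorem, Theorem \ref{CMStability}(4), to the system $\rho_j(z,\bar z,u,v)=0$ then yields an $\E_\M$-mapping $\varphi=(\varphi_1,\dotsc,\varphi_d)$ on a neighborhood $U\times V\subseteq\R^{2n}\times\R^d$ of the origin with $\varphi(0)=0$ and $v=\varphi(z,\bar z,u)$ on $M$, which is the desired representation \eqref{definingequation}. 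The normalization $\nabla\varphi(0)=0$ follows either from the graph $\{v=\varphi\}$ being tangent to $T_0 M=\{v=0\}$ at $0$, or equivalently from implicitly differentiating $\rho_j(z,\bar z,u,\varphi)\equiv 0$ at the origin and using the normalizations of the partials of the $\rho_j$.

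The only step genuinely requiring the ultradifferentiable framework is the invocation of the implicit function theorem; this is precisely what Theorem \ref{CMStability}(4) supplies, and no further obstacle arises.
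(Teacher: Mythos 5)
Your proof is correct and follows essentially the same route as the paper's: a linear holomorphic change of coordinates normalizing $M$ to first order at $p_0$, followed by an application of the $\E_\M$-implicit function theorem (Theorem \ref{CMStability}(4)), which is indeed the only point where the ultradifferentiable structure enters. The sole (cosmetic) difference is that you produce the normalized coordinates from a splitting $T_0\C^N=T^c_0M\oplus(H+JH)$ of the tangent space, whereas the paper works dually, building the forms $\ell_k$ from the linear parts of the defining functions so that $\rho_k=\imag\ell_k(Z)+O(2)$ and then solving for $\imag w$.
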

\begin{proof}
	We follow the proof in \cite{MR1668103} for the result in the smooth category.
	
	After an affine transformation we may assume that $p_0=0$.
	Let $\rho=(\rho_1,\dotsc,\rho_d)$ be a defining function for $M$ near $0$.  
	The complex differentials $\partial \rho_1,\dotsc,\partial\rho_d$ are linearly independent over $\C$ near $0$ since $M$ is generic.
	For each $k\in\{1,\dotsc,d\}$ we write
	\begin{equation*}
	\rho_k ( Z,\bar{Z})=\sum_{r =1}^N\Bigl( a_{kr}x_r +b_{kr}y_r\Bigr)+ O(2)
	\end{equation*}
	where $O(2)$ denotes terms that vanish at least of quadratic order at $0$.
	Since $\rho_k$ is real-valued, the coefficients $a_{kr}$ and $b_{kr}$ have to be real numbers.
	We define a linear form $\ell_k$ on $\C^N$ by
	\begin{equation*}
	\ell_k(Z)=\sum_{r=1}^N (b_{kr}+ia_{kr})Z_r
	\end{equation*}
	and thus the above equation becomes
	\begin{equation*}
	\rho_k( Z,\bar{Z})=\imag \ell_k(Z) + O(2).
	\end{equation*}
	
	The linear forms $\ell_k$, $k=1,\dotsc,d$ are linearly independent over $\C$ since the differentials 
	$\partial \rho_k$, $k=1,\dotsc,d$, are $\C$-linearly indepedent.
	After renumbering the coordinates $Z_j$ we can assume that
	\begin{equation*}
	Z_1, \dotsc, Z_n,\ell_1,\dotsc,\ell_k
	\end{equation*}
	are linearly indepedent as linear forms over $\C$.
	
	We define new holomorphic coordinates $(z,w)$ near $(0,0)\in\C^{n+d}$ by
	\begin{align*}
	z_j &=Z_j & 1&\leq j\leq n\\
	w_k&=\ell_k (Z) & n+1&\leq k\leq N=n+d.
	\end{align*}
	%We note that the vectors $\ell_k$ are $\C$-linearly independent at $0$ 
	%since the $\partial \rho_k(0)$ are linearly independent over $\C$.
	In these new coordinates we have, if we set $\tilde{\rho}(z,\bar{z},w,\bar{w})=\rho(Z(z,w),\overline{Z(z,w)})$,
	\begin{equation}\label{CRnewcoord1}
	\tilde\rho (z,\bar{z},w,\bar{w})=\imag w + O(2)
	\end{equation}
	and therefore we can locally near $0$ solve the equation
	\begin{equation}\label{CRimplicitcoord}
	\tilde{\rho}(z,\bar{z},w,\bar{w})=0
	\end{equation}
	with respect to $t=\imag w$ according to Theorem \ref{CMStability}.
	We obtain an ultradifferentiable solution $\varphi$ of class $\{\M\}$ defined near 
	$0\in\R^{2n+d}=\C^n\times\R^d$ and valued in $\R^d$. 
	The properties $\varphi(0)=0$ and $\nabla\varphi(0)=0$ are easy consequences of
	\eqref{CRnewcoord1} and \eqref{CRimplicitcoord}. 
	We also see that in view of \eqref{CRnewcoord1} and
	\begin{equation*}
	\tilde{\rho}(z,\bar{z},s +i\varphi(z,\bar{z},s),s -i\varphi(z,\bar{z},s))=0
	\end{equation*}
	the function $\psi(z,\bar{z},s,t)=t-\varphi(z,\bar{z},s)$ is also a defining function for $M$ near $0$.
	This finishes the proof.
\end{proof}
\begin{Rem}\label{BasisCRVF}
	We note that Proposition \ref{genericcoordinates} can be used to give a special local basis of CR vector fields.
	Indeed, let 
	$M\subseteq\C^N$ be a generic submanifold of codimension $d$ that is given locally near a point $p_0\in M$
	by a defining function $\rho=(\rho_1,\dotsc,\rho_d)$. If we use the coordinates $(z,w)\in\C^{n+d}$ from above
	then we can formally view $\rho$ as a function on the variables $(z,\bar{z},w,\bar{w})$.
	Let $\rho_{z}$, $\rho_{\bar{z}}$, $\rho_{w}$ and $\rho_{\bar{w}}$ the Jacobi matrices of $\rho$
	with respect to $z$, $\bar{z}$, $w$ and $\bar{w}$ respectively. 
	We can assume that $\rho_{w}$ and $\rho_{\bar{w}}$ are invertible in a neighbourhood of $p_0$.
	%we obtain using the defining equations \eqref{definingequation} of $M$ near $p_0$ 
	%that there is a local basis of CR vector fields near $p_0$ consisting of CR vector fields of the form
	According to \cite[\S 1.6]{MR1668103} a local basis of CR vector fields near $p_0$ is given by
	\begin{equation*}
	(L)=\bigl(\partial_{\bar{z}}\bigr)-\transp{\rho_{\bar{z}}}\transp{\rho_{\bar{w}}}^{-1}
	\bigl(\partial_{\bar{w}}\bigr)
	\end{equation*}
	where we have used the following notation
	\begin{align*}
	(L)&=\begin{pmatrix}
	L_1\\
	\vdots\\
	L_n
	\end{pmatrix},
	&
	\bigl(\partial_{\bar{z}}\bigr)&=\begin{pmatrix}
	\partial_{\bar{z}_1}\\
	\vdots\\
	\partial_{\bar{z}_n}
	\end{pmatrix},
	&
	\bigl(\partial_{\bar{w}}\bigr)&=\begin{pmatrix}
	\partial_{\bar{w}_1}\\
	\vdots\\
	\partial_{\bar{w}_d}
	\end{pmatrix}.
	\end{align*}
	% in the coordinates $(z,w)$ from Proposition \ref{genericcoordinates} (c.f.\ ).
	If we use the defining function $\rho = t-\varphi$ induced by \eqref{definingequation} then this local basis is of the following form
	\begin{equation*}
	\begin{split}
	L_j&=\frac{\partial}{\partial \bar{z}_j}-\sum_{\mu =1}^d 2b^j_\mu\frac{\partial}{\partial \bar{w}_\mu} \\
	&=\frac{\partial}{\partial \bar{z}_j}-\sum_{\mu =1}^d b^j_\mu\frac{\partial}{\partial s_\mu}
	\end{split}
	\end{equation*}
	with 
	\begin{equation*}
	b^j_\mu=i\frac{\det B_{\mu}^j}{\det \Phi}.
	\end{equation*}
	Here we used
	\begin{equation*}
	\Phi =\rho_{\bar{w}}=\begin{pmatrix}
	1+i(\varphi_1)_{s_1} & \cdots & i(\varphi_1)_{s_d}\\
	\vdots &\ddots &\vdots\\
	i(\varphi_d)_{s_1}&\cdots&1+i(\varphi_d)_{s_d}
	\end{pmatrix}
	\end{equation*}
	and $B^j_\mu$ is the following matrix.
	Let $\delta_{\mu\nu}$ be the Kronecker delta defined by $\delta_{\nu\nu}=1$ and 
	$\delta_{\mu\nu}=0$ otherwise and set
	\begin{align*}
	(\varphi)_{s_\nu}&=\begin{pmatrix}
	\delta_{1\nu}+i(\varphi_1)_{s_\nu}\\
	\vdots\hspace{3ex}\\
	\delta_{d\nu}+i(\varphi_d)_{s_\nu}
	\end{pmatrix} 
	&\text{and} &&
	(\varphi)_{\bar{z}_j}&=\begin{pmatrix}
	(\varphi_1)_{\bar{z}_j}\\
	\vdots\\
	(\varphi_d)_{\bar{z}_j}
	\end{pmatrix}.
	\end{align*}
	Then
	\begin{equation*}
	B_{j\mu}=\begin{pmatrix}
	(\varphi)_{s_1} & \cdots & (\varphi)_{s_{\mu -1}} & %{\color{orange}
	(\varphi)_{\bar{z}_j}%}
	& (\varphi)_{s_{\mu +1}}
	&\cdots
	& (\varphi)_{s_d}
	\end{pmatrix}.
	\end{equation*}
	
	In particular, if $M\subseteq\C^{n+1}$ is a real hypersurface of class $\{\M\}$ locally given by the equation
	$\imag w=\varphi(z,\bar{z},\real w)$ where $\varphi\in\E_\M$ then the vector fields
	\begin{equation*}
	L_j=\frac{\partial}{\partial \bar{z}_j}-2i\frac{\varphi_{\bar{z}_j}}{1+i\varphi_s}\frac{\partial}{\partial \bar{w}}\qquad j=1,\dotsc,n
	\end{equation*}
	form a local basis of the CR vector fields of $M$.
	When we use the local coordinates $(z,\bar{z},s)$ of $M$ induced by \eqref{definingequation} then this basis
	takes the form
	\begin{equation*}
	L_j=\frac{\partial}{\partial \bar{z}_j}-i\frac{\varphi_{\bar{z}_j}}{1+i\varphi_s}\frac{\partial}{\partial s}\qquad j=1,\dotsc,n.
	\end{equation*}
\end{Rem}
Next we give a first result on the structure of ultradifferentiable CR manifolds.
\begin{Def}\label{CRorbit}
	Let $M\subseteq\C^N$ a CR submanifold. The \emph{CR orbit} $\mathrm{Orb}_p$ of $p\in M$ 
	is the local Sussman orbit of $p$ in $M$ relative to the set of ultradifferentiable sections of $T^cM$.
\end{Def}
Note that  if $p_0\in M$ then by construction $T^c_p\mathrm{Orb}_{p_0}=T^c_pM$ for all $p\in\mathrm{Orb}_{p_0}$ thence $\mathrm{Orb}_{p_0}$ is the germ of a CR submanifold of $\C^N$ of CR dimension $n$. 

\begin{Def} Let $M\subseteq\C^N$ a CR manifold and $p_0\in M$. 
	\begin{enumerate}
		\item We say that $M$ is minimal at $p_0$ iff there is no submanifold $S\subseteq M$ through $p_0$ such that 
		$T^c_p M\subseteq T_p S$ for all $p\in S$ and $\dim_\R S <\dim_\R M$.
		\item The manifold $M$ is said to be of finite type at $p_0$ iff there are vector fields
		$X_1,\dotsc,X_k\in \E_\M (M, T^cM)$ such that the Lie algebra generated by the $X_1,\dotsc,X_k$
		evaluated at $p_0$ is isomorphic to $T_{p_0}M$.
	\end{enumerate}
\end{Def}
It is well known that finite type implies minimality and that the two notions coincide for real-analytic CR manifolds, c.f.\ \cite{MR1668103}. 
We are going to show that this fact holds also for quasianalytic CR submanifolds.

\begin{Thm}
	Let $\M$ be a quasianalytic weight sequence and $M\subseteq\C^N$ an ultradifferentiable CR manifold of class $\{\M\}$. The following statements are equivalent:
	\begin{enumerate}
		\item\label{1e} $M$ is minimal at $p_0$.
		\item\label{2e} $\dim_\R \mathrm{Orb}_{p_0}=\dim_\R M$
		\item $M$ is of finite type at $p_0$.
	\end{enumerate}
\end{Thm}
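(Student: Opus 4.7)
The plan is to take (2) as the pivot and prove $(1)\Leftrightarrow(2)\Leftrightarrow(3)$, with the quasianalytic identification of the Sussman orbit with the Nagano leaf (Corollary \ref{NaganoSussman}) doing the crucial work. Set $\mathfrak{D}:=\E_\M(M,T^cM)$ and let $\mathfrak{g}$ be the Lie subalgebra of $\mathfrak{X}_\M(M)$ it generates. Since $\mathfrak{D}$ is an $\E_\M$-module, the identity $f[X,Y]=[fX,Y]+(Yf)X$ together with induction on bracket depth shows that $\mathfrak{g}$ is itself an $\E_\M$-module, so that Theorem \ref{NaganoThm} applies. Corollary \ref{NaganoSussman} then identifies the CR orbit $\mathrm{Orb}_{p_0}$ with the local Nagano leaf of $\mathfrak{g}$ at $p_0$; in particular $T_p\mathrm{Orb}_{p_0}=\mathfrak{g}(p)$ along the orbit and $\dim_\R\mathrm{Orb}_{p_0}=\dim_\R\mathfrak{g}(p_0)$.

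For $(1)\Leftrightarrow(2)$ I argue by contraposition on each side. If $\dim_\R\mathrm{Orb}_{p_0}<\dim_\R M$, then $\mathrm{Orb}_{p_0}$ is itself a proper ultradifferentiable submanifold of $M$ through $p_0$; because $T^cM$ admits ultradifferentiable local frames, every fibre $T^c_pM$ is spanned by values of elements of $\mathfrak{D}$ and is therefore contained in $T_p\mathrm{Orb}_{p_0}$ along the orbit, so $M$ is not minimal at $p_0$. Conversely, if $S\subseteq M$ is a submanifold through $p_0$ with $T^c_pM\subseteq T_pS$ on $S$ and $\dim_\R S<\dim_\R M$, then every $X\in\mathfrak{D}$ is tangent to $S$; uniqueness of solutions to first-order ODEs (valid already for $S$ of class $C^1$) shows that the integral curves used in Theorem \ref{LocalSussman}(2) stay in $S$, whence $\mathrm{Orb}_{p_0}\subseteq S$ in a neighbourhood of $p_0$ and therefore $\dim_\R\mathrm{Orb}_{p_0}\le\dim_\R S<\dim_\R M$.

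For $(2)\Leftrightarrow(3)$ I exploit $\dim_\R\mathrm{Orb}_{p_0}=\dim_\R\mathfrak{g}(p_0)$. If $X_1,\dotsc,X_k\in\mathfrak{D}$ witness finite type at $p_0$, the Lie algebra they generate sits inside $\mathfrak{g}$, giving $T_{p_0}M\subseteq\mathfrak{g}(p_0)$ and hence equality of dimensions. Conversely, if $\mathfrak{g}(p_0)=T_{p_0}M$, pick an $\R$-basis of $T_{p_0}M$; each basis vector is the value at $p_0$ of a finite sum of $\E_\M$-multiples of iterated brackets of finitely many elements of $\mathfrak{D}$, which upon evaluation becomes a real linear combination of values at $p_0$ of such iterated brackets. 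Collecting the finite list of generators occurring in these expressions yields $X_1,\dotsc,X_k\in\mathfrak{D}$ whose Lie algebra evaluated at $p_0$ already spans $T_{p_0}M$, giving finite type.

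The principal technical obstacle is the verification of the $\E_\M$-module property of $\mathfrak{g}$ needed to invoke the quasianalytic Nagano theorem, together with the direction $(2)\Rightarrow(1)$, where the hypothetical obstructing $S$ is not assumed to be of class $\{\M\}$. Both points are resolved using that $\E_\M$ is closed under derivation and under solving ODEs (Theorem \ref{ClosednessODE}), combined with the standard uniqueness of $C^1$-ODE solutions; everything else is a direct transcription of the classical analytic proof in \cite{MR1668103}.
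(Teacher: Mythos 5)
Your proposal is correct and follows essentially the same route as the paper: the equivalence $(1)\Leftrightarrow(2)$ via the classical orbit argument (valid without quasianalyticity), and $(2)\Leftrightarrow(3)$ by identifying $\mathrm{Orb}_{p_0}$ with the local Nagano leaf of $\mathfrak{g}$ through Corollary \ref{NaganoSussman}, so that $\dim_\R\mathrm{Orb}_{p_0}=\dim_\R\mathfrak{g}(p_0)$ and finite type becomes $\dim_\R\mathfrak{g}(p_0)=\dim_\R M$. The extra details you supply (the $\E_\M$-module property of $\mathfrak{g}$ and the tangency/uniqueness argument keeping integral curves inside the obstructing submanifold $S$) are points the paper delegates to the cited smooth-category arguments, and they are handled correctly.
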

\begin{proof}
	The equivalence of (1) and (2) holds even if $\M$ is non-quasianalytic. Following the arguments in \cite[\S 4.1.]{MR1668103} 
	we see that, if we assume that $M$ is nonminimal then $\dim_\R\mathrm{Orb}_{p_0}<\dim_\R M$.
	On the other hand if $\dim_\R\mathrm{Orb}_{p_0}<\dim_\R M$ then any representative $W$ of $\mathrm{Orb}_{p_0}$ is by the remark below Definition \ref{CRorbit} a submanifold of $M$ and $T^c_p W=T^c_pM$ for all $p\in W$.
	It remains to prove that (2) implies (3).
	
	By Corollary \ref{NaganoSussman} we have that $\mathrm{Orb}_{p_0}=\gamma_{p_0}(\mathfrak{g})$, where $\mathfrak{g}$
	is the Lie algebra generated by the ultradifferentiable sections of $T^cU$ with $U$ being a sufficiently small neighbourhood of $p_0$ and $\gamma_{p_0}(\mathfrak{g})$ the local Nagano leaf of $\mathfrak{g}$ at $p_0$. 
	Hence $\dim_\R \mathrm{Orb}_{p_0}=\dim_\R \gamma_{p_0}(\mathfrak{g})=\dim_\R \mathfrak{g}(p_0)$.
	
	On the other hand $M$ is of finite type at $p_0$ if and only if $\dim_\R \mathfrak{g}(p_0) =\dim_\R M$.
\end{proof}
We shall note we could have shown the equivalence of \eqref{1e} and \eqref{2e} by citing the corresponding  proof in the smooth category in \cite[Theorem 4.1.3.]{MR1668103}. 
Indeed, let $M\subseteq \C^N$ be an ultradifferentiable CR submanifold of class $\{\M\}$ and $p_0\in M$.
Then we can consider $M$ also as an smooth CR manifold and define similar to \cite{MR1668103} 
$\widetilde{\mathrm{Orb}}_{p_0}$ as the Sussman Orbit relative to the smooth sections of $T^cM$ near $p_0$.

However, if $X_1,\dotsc, X_n$ is a local basis of $\E_\M(M,T^cM)$ near $p_0$ then we have that $\mathrm{Orb}_{p_0}$ is generated by $\mathfrak{D}=\{X_1,\dotsc, X_n\}$, c.f.\ Theorem \ref{LocalSussman}.
On the other hand, since the vector fields $X_1,\dotsc, X_n$ constitute also a local basis of $\E(M, T^cM)$ near $p_0$ we obtain also that $\widetilde{\mathrm{Orb}}_{p_0}$ is generated by $\mathfrak{D}$.
It follows that $\mathrm{Orb}_{p_0}=\widetilde{\mathrm{Orb}}_{p_0}$ as germs of manifolds at $p_0$.

The next example is a straightforward generalization of \cite[Example 1.5.16.]{MR1668103}.
\begin{Ex}
	Let $\M$ be a non-quasianalytic weight sequence and $\psi\in\E_\M(\R)$ a real valued function such that
	$\psi(y)=0$ for $y\leq 0$ and $\psi(y)>0$ for $y>0$. We define a real hypersurface in $\C^2$ by
	\begin{equation*}
	M=\bigl\{(z,w)\in\C^2\mid \imag w=\varphi(\imag z)\bigr\}.
	\end{equation*}
	Then $M$ is minimal at the origin but not of finite type at $0$.
	Indeed, if $M$ is non-minimal at $0$ then according to  \cite[Theorem 1.5.15]{MR1668103} 
	there is a holomorphic hypersurface 
	$S\subseteq M$ through the origin. Since $\partial /\partial z$ is tangent to $S$ at $0$ it follows that 
	$S$ is given near the origin by the defining equation $w=h(z)$ where $h$ is a holomorphic function defined
	in some neighbourhood of $0\in\C$ with $h(0)=0$. We conclude that due to $S\subseteq M$ we necessarily have that
	\begin{equation*}
	h(z)-\overline{h(z)}=2i\psi(\real z)
	\end{equation*}
	in some neighbourhood of $0$. It follows that $\psi$ has to be real-analytic near $0$ which contradicts the definition of $\psi$.
	
	Since $\psi$ is flat at the origin, it follows that $M$ cannot be of finite type at $0$.
\end{Ex}

We close this section by recalling the space of multipliers for an ultradifferentiable
abstract CR manifold $(M,\crb)$, which was introduced by \cite{MR3593674} in
the smooth setting.
To begin with consider the following sequence of spaces of sections
\begin{equation*}
E_k=\bigl\langle \mathcal{L}_{K_1}\dots \mathcal{L}_{K_j}\theta\colon j\leq k,\: \: 
K_q\in \E_\M (M, \crb), \,  \theta \in \E_\M (M,T^0 M)
\bigr\rangle.
\end{equation*}
We note that $E_0  = \E_\M (M, T^0 M)$,
and $E_j \subseteq \E_\M(M, T' M) $ for all $j\in \N_0$, and 
set $E = \bigcup_{j\in\N_0} E_j$.

We associate to the increasing chain $E_k$ the increasing sequence
of ideals $\mathcal{S}^k \subset \E_\M (M,\C) $, where
\begin{equation*}
\mathcal{S}^k = \bigwedge\nolimits^N E_k =  
\left\{ \det \begin{pmatrix}
V^1 ( \mathfrak{Y}_1 ) & \dots  & V^1 ( \mathfrak{Y}_N ) \\
\vdots & & \vdots \\
V^N ( \mathfrak{Y}_1 ) & \dots  & V^N ( \mathfrak{Y}_N ) \\
\end{pmatrix} \colon V^j \in E_k,\, \mathfrak{Y}_j \in \E_\M(M,(T'M)^*) \right\}. 
\end{equation*}
We set $\mathcal{S}=\mathcal{S}(M)=\bigcup_{k\in\N_0} \mathcal{S}^k$ and call it the space of multipliers of $M$. 
In fact  each $\mathcal{S}^k$ and thus also $\mathcal{S}$ can be considered actually as ideal sheaves, if
we define $E^k(U)$ and $\mathcal{S}^k(U)$ accordingly.

Note that locally one can find smaller sets of 
generators: Let $U\subset M$ be open, and  assume 
that $L_1,\dots,L_n$ is a local basis for  $\Gamma(U,\crb) $,  
that $\theta^1, \dots ,\theta^d$ is a local basis for $\Gamma(U,T^0M)$, 
and that $\omega^1, \dots, \omega^N$ is a local basis of 
$T^\prime M$. We write 
$\mathcal{L}_j = \mathcal{L}_{L_j}$ for $j = 1, \dots, n$ and 
$\mathcal{L}^\alpha = \mathcal{L}_1^{\alpha_1} \dots \mathcal{L}_n^{\alpha_n}$
for any multi-index $\alpha = (\alpha_1, \dots, \alpha_n)\in \N^n$. 
We note that, since $\crb$ is formally integrable, the $\mathcal{L}^\alpha$, 
where $|\alpha| = k$, generate {\em all} $k$-th order homogeneous differential
operators in the $\mathcal{L}_j$, and  we thus  have 
\begin{equation*}
E_k\big|_U =\bigl\langle \mathcal{L}^\alpha \theta^\mu \,:\;\; 1 \leq \mu\leq d,\: \: |\alpha| \leq k
\bigr\rangle.
\end{equation*}

We can expand
\begin{equation}\label{e:exptheta} \mathcal{L}^\alpha \theta^\mu = \sum_{\ell=1}^{N} A^{\alpha,\mu}_\ell \omega^\ell \end{equation}
and for any choice
$\underline\alpha = (\alpha^1, \dots, \alpha^N ) $  of multi-indices
$\alpha^1, \dots, \alpha^N \in \N^n $ and  $r = (r_1,\dots , r_N) \in \{1,\dots,d\}^N$  
we define the functions
\begin{equation}\label{equ:basisfunctions}
D(\underline\alpha ,r) = \det 
\begin{pmatrix}
A^{\alpha^1,r_1}_1 & \dots & A^{\alpha^1,r_1}_N \\
\vdots & & \vdots \\
A^{\alpha^N,r_N}_1 & \dots & A^{\alpha^N,r_N}_N \\
\end{pmatrix} .
\end{equation} 
With this notation, we have 
\begin{equation*}
\mathcal{S}^k \big|_U = \left\langle D(\underline\alpha ,r)\colon |\alpha^j|\leq k \right\rangle;
\end{equation*}
we shall denote the stalk of $\mathcal{S}^k$ at $p$ by $\mathcal{S}^k_{p}$.

The space of multipliers of a CR manifold $M$ clearly encodes the nondegeneracy properties of $M$.
We close this section by taking a closer look at the connection of $\mathcal{S}$ with finite nondegeneracy.
We recall from \cite{MR1668103} the definition of finite nondegeneracy for abstract CR manifolds.
\begin{Def}\label{FiniteMf}
	Let $M$ be an abstract CR manifold and 
	\begin{equation}\label{finiteNonDeg}
	E_k(p)=\bigl\langle \mathcal{L}_{K_1}\dots \mathcal{L}_{K_j}\theta(p)\colon j\leq k,\: \: 
	K_q\in \E (M, \crb), \,  \theta \in \E (M,T^0 M)
	\bigr\rangle.
	\end{equation}
	for $p\in M$ and $k\in\N$. Then $M$ is $k_0$-nondegenerate at $p_0\in M$
	iff $E_{k_0-1}\subsetneq E_{k_0}=T^\prime_{p_0}M$.
	We say that $M$ is finite nondegenerate iff $M$ is finite nondegenerate at every point.
\end{Def}
\begin{Rem}\label{Mult-FiniteNonDeg}
	This definition is in fact local, since by \cite[Proposition 11.1.10.]{MR1668103} 
	if $L_1,\dotsc, L_n$ is  a local basis of CR vector fields and $\theta^1,\dotsc \theta^d$ is a local basis of characteristic forms near $p_0$ then $M$ is $k_0$-nondegenerate if and only if
	\begin{equation*}
	T^\prime_{p_0}M=\spanc_\C\bigl\{\mathcal{L}^\alpha\theta^{\mu}(p_0)\colon \lvert\alpha\rvert\leq k_0,\;\mu\in\{1,\dotsc,d\}\bigr\}.
	\end{equation*}
	Hence we may replace $M$ with any open neighbourhood $U\!\subseteq\! M$ of $p_0$ in \eqref{finiteNonDeg}.
	Thus we  observe that a CR submanifold $M$ is $k_0$-nondegenerate at $p_0\in M$ 
	if and only if $\mathcal{S}^{k_0}_{p_0}=(\E_\M)_{p_0}$.
	
	More precisely, let $U\subseteq M$ be an open subset and $q\in U$.
	Then $M$ is $k_0$-nondegenerate at $q$ if and only if there is a multiplier 
	$f\in\mathcal{S}^{k_0}(U)$ that does not
	vanish at $q$, i.e.\ $f(q)\neq 0$.
	
	Indeed, if $f(q)\neq 0$ then obviously $E_{k_0}(q)=T^\prime_q M$. On the other hand, if $g(q)=0$ for all 
	multipliers $g\in\mathcal{S}^{k_0}(U)$ then necessarily $E_{k_0}(q)\neq T^\prime_{q}M$.
\end{Rem}
\section{Ultradifferentiable regularity of CR mappings}\label{sec:ultraRefl}
%The aim of this section is to prove the generalizations of results of Bernhard Lamel and Berhanu-Xiao announced in section \ref{Intro}.
%Lamel proved that a finitely nondegenerate CR mapping that extends holomorphically to a wedge 
%between two generic submanifolds is real analytic if the manifolds are real analytic (\cite{MR1861300}) 
%and smooth if the manifolds are both smooth (\cite{MR2085046}).
%Our main result states that if the two CR manifolds are both ultradifferentiable of class $\{\M\}$
%then $H$ has to be ultradifferentiable of the 
%\emph{same} class $\{\M\}$.
The main goal of this section is to present the proof of Theorem \ref{Reflectionsprinciple1}. 
Furthermore we show also ultradifferentiable versions of further regularity results of 
\cite{MR2085046} and \cite{MR3405870}.
However, first we need to recall the definition of finite nondegeneracy of a CR mapping.
\begin{Def}\label{FiniteMap}
	Let $M$ be an abstract CR manifold and $M^\prime\subseteq\C^{N^\prime}$ a generic submanifold.
	Furthermore let $\rho^\prime =(\rho^\prime_1,\dotsc,\rho^\prime_{d^\prime})$ 
	be a defining function of $M^\prime$ near a point $q_0\in M^\prime$, $L_1,\dotsc, L_n$ 
	a local basis of CR vector fields on $M$ near $p_0\in M$ 
	and $H: M\rightarrow M^\prime$ a $\CC^m$-CR mapping with $H(p_0)=q_0$.
	
	For $0\leq k\leq m$ define an increasing sequence of subspaces $E_k(p_0)\subseteq\C^{N^\prime}$ by
	\begin{equation*}
	E_k(p_0):=\spanc_\C\biggl\{L^{\alpha}\frac{\partial\rho^\prime}{\partial Z^\prime}
	\bigl(H(Z),\overline{H(Z)}\bigr)\vert_{Z=p_0}\;:\; 0\leq\lvert\alpha\rvert\leq k,\,1\leq l\leq d^\prime\biggr\}.
	\end{equation*}
	We say that $H$ is $k_0$-nondegenerate at $p_0$ ($0\leq k_0\leq m$) 
	iff $E_{k_0-1}(p_0)\subsetneq E_{k_0}(p_0)=\C^{N^\prime}$.
\end{Def}
\begin{Rem}
Comparing Definition \ref{FiniteMap} with Definition \ref{FiniteMf} 
we observe that a CR submanifold $M\in\C^N$ is $k_0$-nondegenerate
if and only if $\id:\, M\rightarrow M$ is $k_0$-nondegenerate.
We note also the fact that any CR diffeomorphism between 
two $k_0$-nondegenerate CR submanifolds is $k_0$-nondegenerate.
\end{Rem} 

Finally we  need to recall that
if $\rho$ is a local defining function of $M$, 
$\Gamma\subseteq\R^d$ an open convex cone, $p_0\in M$ and $U\subseteq\C^N$ 
an open neighbourhood of $p_0$, 
then a wedge $\mathcal{W}$ with edge $M$ centered at $p_0$ is an open subset of
the form $\mathcal{W}:=\{Z\in U\mid \rho(Z,\bar{Z})\in\Gamma\}$.
%\begin{Thm}
%	Let $M\subseteq\C^N$ and $M^\prime\subseteq\C^{N^\prime}$ be two generic ultradifferentiable 
%	submanifolds of class $\{\M\}$, 
%	$p_0\in M$, $p^\prime_0\in M^\prime$ and $H\!:(M,p_0)\rightarrow (M^\prime,p_0^\prime)$
%	a $\CC^{k_0}$-CR mapping that is $k_0$-nondegenerate at $p_0$. Suppose furthermore that $H$
%	extends continuously to a holomorphic map in a wedge $\W$ with edge $M$.
%	Then $H$ is ultradifferentiable of class $\{\M\}$ in a neighbourhood of $p_0$.
%\end{Thm}
\begin{proof}[Proof of Theorem \ref{Reflectionsprinciple1}]
	Since the assertion of the theorem is local, we are going to work on a neighbourhood 
	$\Omega\subseteq\C^N$ of $p_0$. If $\Omega$ is small enough then by Proposition 
	\ref{genericcoordinates} there are open neigbourhoods $U\subseteq\C^n$ and $V\subseteq\R^d$ 
	of the origin and a function $\varphi\in\E_\M(U\times V,\R^d)$ with $\varphi(0,0)=0$ and 
	$\nabla\varphi (0,0)=0$ such that 
	\begin{equation*}
	M\cap\Omega=\bigl\{(z,w)\in\Omega\mid \imag w=\varphi(z,\bar{z},\real w)\bigr\}.
	\end{equation*}
	From now we denote $M\cap\Omega$ by $M$. If we choose $U$ and $V$ to be small enough we can 
	consider the diffeomorphism
	\begin{align*}
	\Psi:\; U\times V&\longrightarrow \quad M\\
	(z,s)\;&\longmapsto (z,s+i\varphi(z,\bar{z},s)).
	\end{align*}
	If we shrink the neighbourhoods $U,V$ a little bit (such that 
	$\varphi\in\E_\M(\overline{U\!\times\! V},\R^d)$)
	and assume that w.l.o.g.\ both sets are convex
	we can extend the mapping $\Psi$ $\M$-almost analytically in the $s$-variables
	, i.e.\
	there exists a smooth function $\tilde{\Psi}:\, U\times V\times \R^d\rightarrow \C^N$
	such that $\tilde{\Psi}\vert_{U\times V\times\{0\}}=\Psi$ and for each component 
	$\tilde{\Psi}_k$, $k=1,\dotsc,N$, of $\tilde{\Psi}$ we have
	\begin{equation}\label{Diffeo-M-almost}
	\biggl\lvert\frac{\partial \tilde{\Psi}_k}{\partial \bar{w}^\prime_j}(z,\bar{z},s,t)\biggr\rvert
	\leq Ch_\M(\gamma\lvert t\rvert )\qquad j=1,\dotsc, d,
	\end{equation}
	for some constants $C,\gamma>0$. Here $w^\prime=s+it\in V+i\R^d$. 
	We see that there is some $r>0$ such that $\tilde{\Psi}\vert_{U\times V\times B_r(0)}$ 
	is a diffeomorphism.
	
	By assumption $H=(H_1,\dotsc,H_{N^{\prime}})$ extends continuously to a holomorphic mapping
	on a wedge $\W$ near $0$.
	If we shrink $\W$ we may assume that $\partial H_j$, $j=1,\dotsc,N^\prime$, is bounded on $\W$.
	By definition 
	\begin{equation*}
	\W =\bigl\{Z\in \Omega_0\mid \rho(Z,\bar{Z})\in \tilde{\Gamma}\bigr\}
	\end{equation*}
	for a neighbourhood $\Omega_0$ of the origin in $\C^N$ and an open acute cone 
	$\tilde{\Gamma}\subseteq\R^d$. 
	If we shrink $U,V$, when necessary, and choose a suitable open and acute cone $\Gamma$, we achieve that
	\begin{equation*}
	\tilde{\Psi}\Bigl(U\times V\times \Gamma_\delta \Bigr)\subseteq \W
	\end{equation*}
	for some $r\geq \delta>0$. Note that $\tilde{\Psi}(U\times V\times\Gamma_\delta)$ is open in $\C^N$. For each $j=1,\dotsc,N^\prime$ set $h_j=H_j\circ\tilde{\Psi}$ and
	$u_j=H_j\circ \Psi$. Since
	\begin{equation*}
	\frac{\partial h_j}{\partial \bar{w}^\prime_k}=\sum_{\ell=1}^N\frac{\partial H_j}{\partial Z_\ell}
	\frac{\partial \tilde{\Psi}_\ell}{\partial \bar{w}^\prime_k} \qquad j=1,\dotsc, N^\prime,\; k=1,\dotsc, d,
	\end{equation*}
	and $\partial H_j$ is bounded,
	each function $h_j$ is $\M$-almost analytic on $U\times V\times\Gamma_\delta$ due to 
	\eqref{Diffeo-M-almost} and extends $u_j\in\CC^{k_0}(U\times V)$. 
	Hence Theorem \ref{Theorem-M-BVWF} implies
	\begin{equation}\label{Mextension-up}
	\WF_\M u_j\subseteq \bigl(U\times V\bigr)\times \bigl(\R^{2n}\times\Gamma^\circ\bigr)\!\setminus\!\{0\}.
	\end{equation}
	
	If $L_j$, $j=1,\dotsc, n$, is a basis of the CR vector fields on $M=M\cap\Omega$, then 
	$\Lambda_j=\Psi^\ast L_j$ defines a CR structure on $U\times V$ and $\Lambda_j u_k=0$ for
	$j=1,\dotsc, n$ and $k=1,\dotsc, N^\prime$.
	
	Let $\rho^\prime$ be a defining function of $M^\prime$ near $p^\prime_0=0\in\C^{N^\prime}$.
	Then there are ultradifferentiable functions $\Phi_{\ell,\alpha}(Z^\prime,\bar{Z}^\prime,W)$ for
	$\lvert\alpha\rvert\leq k_0$, $\ell=1,\dotsc,d^\prime$, defined in a neighbourhood of
	$\{0\}\times\C^{K_0}\subseteq \C^{N^\prime}\times\C^{K_0}$ and polynomial in the last 
	$K_0=N^\prime\cdot\lvert\{\alpha\in\N^n_0\mid\lvert\alpha\rvert\leq k_0\}\rvert$ variables
	such that 
	\begin{equation}
	\Lambda^\alpha \bigl(\rho_{\ell}^\prime \circ u\bigr)(z,\bar{z},s)=\Phi_{\ell,\alpha}
	\Bigl(u(z,\bar{z},s),\bar{u}(z,\bar{z},s),\bigl(\Lambda^\beta\bar{u}(z,\bar{z},s)
	\bigr)_{\lvert\beta\rvert\leq k_0}\Bigr)=0
	\end{equation}
	and 
	\begin{equation*}
	\Lambda^\alpha\rho^\prime_{\ell,Z^\prime}\bigl(u,\bar{u}\bigr)(0,0,0)
	=\Phi_{\ell,\alpha,Z^\prime}\bigl(0,0,(\Lambda^\beta \bar{u}(0,0,0))_{\lvert\beta\rvert\leq k_0}\bigr)
	\end{equation*}
	Since $H$ is $k_0$-nondegenerate there are multi-indices $\alpha^1,\dotsc,\alpha^{N^\prime}$ 
	and $\ell^1,\dotsc,\ell^{N^\prime}\in\{1,\dots,d^\prime\}$ such that if we set
	\begin{equation*}
	\Phi=\bigl(\Phi_{\ell^1,\alpha^1},\dotsc,\Phi_{\ell^{N^\prime},\alpha^{N^\prime}}\bigr)
	\end{equation*}
	the matrix $\Phi_{Z^\prime}$ is invertible. 
	Hence by Theorem \ref{Lamel-implicit} there is a smooth function $\phi=(\phi_1,\dotsc,\phi_{N^\prime})$ 
	defined in a neighbourhood of $(0,(\Lambda^\beta\bar{u}(0,0,0))_{\lvert\beta\rvert})$ in 
	$\C^{N^\prime}\times\C^{K_0}$ such that, if we shrink $U\times V$ accordingly,
	\begin{equation*}
	u_j(z,\bar{z},s)=\phi_j\bigl(u(z,\bar{z},s),\bar{u}(z,\bar{z},s),(\Lambda^\beta \bar{u}(z,\bar{z},s))_{\lvert\beta\rvert\leq k_0}\bigr)\qquad (z,s)\in U\times V,\;\;j=1,\dotsc,N^\prime
	\end{equation*}
	and \eqref{AlmostholomorphicEst} holds. 
	If we further shrink $U\times V$ and $\delta$ and choose $\Gamma^\prime\subset\subset\Gamma$ appropriately we see that
	\begin{equation}
	g_j (z,\bar{z},s,t)=\phi_j\bigl(h(z,\bar{z},s,-t),\bar{h}(z,\bar{z},s,-t),
	(\tilde{h}_{\ell,\beta}(z,\bar{z},s,t)_{\ell\in\{1,\dotsc,N^\prime\};\lvert\beta\rvert\leq k_0}\bigr)
	\end{equation}
	is well defined for $t\in -\Gamma^\prime_\delta$. 
	Here $\tilde{h}_{j,\beta}$ is the $\M$-almost analytic 
	extension of  $\Lambda^\beta \bar{u}_j$ on $U\times V\times (-\Gamma^\prime_\delta)$, 
	which exists due to  \eqref{Mextension-up}, \eqref{elliptic-regEq}, Proposition \ref{WF-MProperties} 
	and Theorem \ref{BV-M-WF}. 
	It is also easy to see that $\bar{h}(z,\bar{z},s,-t)$ is
	$\M$-almost analytic on $U\times V\times(-\Gamma^\prime_\delta)$.
	We have that
	\begin{equation}\label{Reflection}
	\frac{\partial g_j}{\partial \bar{w}^\prime_\ell}=
	\sum_{k=1}^{N^\prime}\frac{\partial \phi_j}{\partial Z^\prime_k}
	\frac{\partial h_k}{\partial w^\prime_\ell} +\sum_{k=1}^{N^{\prime}} 
	\frac{\partial\phi_j}{\partial\bar{Z}^{\prime}}\frac{\partial\bar{h}}{\partial w^\prime_\ell}
	+\sum_{k=1}^{N^\prime}\sum_{\lvert\beta\rvert\leq k_0}\frac{\partial\phi_j}{\partial W_{k,\beta}}\frac{\partial\tilde{h}_{k,\beta}}{\partial w^\prime_\ell}
	\end{equation}
	for $j=1,\dotsc,N^\prime$ and $\ell=1,\dotsc,d$. Note that we can choose $U\times V$
	and $\Gamma^\prime_\delta$ so small that all functions appearing on the 
	right-hand side are uniformly bounded. 
	Hence, since $\partial_{w_\ell^\prime}\bar{h}=\overline{\partial_{\bar{w}^\prime_\ell} h}$, the last two terms on the right hand side of \eqref{Reflection} are $\M$-almost 
	analytic. The estimate \eqref{AlmostholomorphicEst} and the arguments in 
	\cite[Section 3.3]{LamelMir} give that the first sum on the right hand side
	of \eqref{Reflection} is also $\M$-almost analytic.
	We conclude that $g_j$ is an $\M$-almost analytic extension on 
	$U\times V\times(-\Gamma^\prime_\delta)$ of $u_j$
	and thus
	\begin{equation*}
	\WF_\M u_j\subseteq \bigl(U\times V\bigr)\times\bigl(\R^n\times (\Gamma^\prime\cup -\Gamma^\prime)^\circ\bigr)\!\setminus\!\{0\}
	=\bigl(U\times V\bigr)\times\bigl(\R^n\!\setminus\!\{0\}\times\{0\}\bigr).
	\end{equation*}
	%\begin{center}
	%	\begin{tikzpicture}
	%	\cercle{3,0}{1.5cm}{20}{180}{0.4pt}{}
	%	\draw (2.6,0.7) node{$\mathcal{W}$};
	%	%%\uncover<4->{\cercle{2,0}{0.8cm}{20}{-178}{0.4pt}{blue}}%%
	%	\draw [thick](0,0) to [out=-70,in=120] (6,0);
	%	\draw (5.3,0.9) node{$M$};
	%	\draw[<-] (8.2,2) arc [radius=2.4, start angle=45,  end angle=130];
	%	\node[above] at (6.6,2.7) {$\Psi^{-1}$};
	%	\draw (9,0) rectangle (11,1.3);
	%	\draw[,dashed,thin] (9,0) rectangle (11,-1);
	%	\draw[thick] (7.5,0) -- (12.5,0);
	%	\draw (12.1,-0.25) node{$U\times W$};
	%	\draw (8.5,-0.5) node{$-\Gamma^\prime$};
	%	\draw (8.7,0.7) node{$\Gamma^\prime$};
	%	\end{tikzpicture}
	%\end{center}
	
	On the other hand, since each $u_j$ is CR we have that 
	$\WF_\M u_j\vert_0\subseteq \{0\}\times\R^d\setminus\{0\}$
	by \eqref{elliptic-regEq} and we deduce that
	in fact $\WF_\M u_j\vert_0=\emptyset$ for all $j=1,\dotsc,N^\prime$.
	Hence the mapping $H$ is ultradifferentiable of class $\{\M\}$ near $p_0$.
\end{proof}
%\begin{Rem}
%It is not surprising that the proof of Theorem \ref{Reflectionsprinciple1} uses microlocal techniques.
%In fact, Baouendi, Chang and Treves \cite{MR723811} introduced the socalled hypoanalytic wavefront set
% $\WF_{ha} u\subset T^0M$
%for CR distributions $u$. A point $(p,\xi_p)\in T^0M$ is said to not be in $\WF_{ha} u$ iff there is a wedge %$\mathcal{W}$ with edge $M$
%\end{Rem}
If we recall the well-known result of Tumanov \cite{MR945904} which states that any CR function on a minimal
CR submanifold $M$ extends to a holomorphic function on a wedge with edge $M$, then we obtain the following
corollary.
\begin{Cor}
	Let $M\subseteq\C^N$ and $M^\prime\subseteq\C^{N^\prime}$ generic submanifolds of class $\{\M\}$, 
	$p_0\in M$, $p_0^\prime\in M^\prime$, $M$ minimal at $p_0$ 
	and $H:\,(M,p_0)\rightarrow (M^\prime,p_0^\prime)$
	a  $\CC^{k_0}$-CR mapping that is $k_0$-nondegenerate at $p_0$.
	Then $H$ is ultradifferentiable of class $\{\M\}$ in some neighbourhood of $p_0$.
\end{Cor}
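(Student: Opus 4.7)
The plan is to reduce this corollary to Theorem \ref{Reflectionsprinciple1} by producing, from the minimality hypothesis alone, a holomorphic extension of $H$ to a wedge with edge $M$ at $p_0$. Once such an extension is in hand, the corollary follows immediately from Theorem \ref{Reflectionsprinciple1}, since all of its remaining hypotheses (generic submanifolds of class $\{\M\}$, $\CC^{k_0}$-CR mapping, $k_0$-nondegeneracy at $p_0$) are already assumed.

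The key input is the theorem of Tumanov \cite{MR945904}: if $M\subseteq\C^N$ is a $\CC^2$ generic CR submanifold which is minimal at $p_0$, then every continuous CR function defined near $p_0$ extends continuously to a holomorphic function on some wedge $\W$ with edge $M$ centered at $p_0$. Crucially, the cone and the neighbourhood defining $\W$ depend only on $M$ and $p_0$, not on the particular CR function, so a finite collection of CR functions can be arranged to extend holomorphically on a \emph{common} wedge.

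I would proceed as follows. First, choose local holomorphic coordinates $Z^\prime=(Z^\prime_1,\dotsc,Z^\prime_{N^\prime})$ on $\C^{N^\prime}$ near $p_0^\prime$ and set $H_j=Z^\prime_j\circ H$ for $j=1,\dotsc,N^\prime$. Since $H$ is a $\CC^{k_0}$-CR mapping into the embedded manifold $M^\prime$, each component $H_j$ is a continuous CR function on $M$ near $p_0$. By Tumanov's theorem, applied componentwise but with the same wedge, there exists a wedge $\W$ with edge $M$ centered at $p_0$ and a continuous map $\tilde H=(\tilde H_1,\dotsc,\tilde H_{N^\prime})\!:\,\W\cup (M\cap U)\rightarrow\C^{N^\prime}$ such that each $\tilde H_j$ is holomorphic on $\W$ and $\tilde H|_{M\cap U}=H$. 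Thus $H$ extends continuously to a holomorphic mapping on the wedge $\W$ with edge $M$.

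At this point all assumptions of Theorem \ref{Reflectionsprinciple1} are verified: $M$ and $M^\prime$ are generic submanifolds of class $\{\M\}$, $H$ is a $\CC^{k_0}$-CR mapping that is $k_0$-nondegenerate at $p_0$, and $H$ extends continuously to a holomorphic map on a wedge with edge $M$. Applying that theorem yields that $H$ is ultradifferentiable of class $\{\M\}$ in a neighbourhood of $p_0$, which is the desired conclusion. The only subtle point, and hence the most delicate step, is to make sure that Tumanov's theorem can be invoked at the regularity $\CC^{k_0}$ (in particular $\CC^{k_0}\geq\CC^2$ is needed, and the underlying manifold structure of class $\{\M\}$ is more than enough for the hypotheses of Tumanov's result); apart from this, the proof is a direct citation of the two results.
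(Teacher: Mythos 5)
Your proposal is correct and is exactly the paper's argument: the corollary is stated immediately after the remark that Tumanov's extension theorem for minimal generic submanifolds furnishes the wedge extension, after which Theorem \ref{Reflectionsprinciple1} applies directly. Your additional care about the common wedge for the components and the required a priori regularity only makes explicit what the paper leaves implicit.
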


This leads to the following result.
\begin{Cor}
	Let $M\subseteq \C^N$ and $M^\prime\subseteq\C^{N^\prime}$ generic submanifolds of class $\{\M\}$ 
	that are $k_0$-nondegenerate at $p_0\in M$ and $p_0^\prime\in M^\prime$, respectively.
	Furthermore assume that $M$ is minimal at $p_0$ and let $H: M\rightarrow M^\prime$ a CR diffeomorphism
	that is $\CC^{k_0}$ near $p_0$ and satisfies $H(p_0)=p_0^\prime$. 
	Then $H$ has to be ultradifferentiable of class $\{\M\}$ near $p_0$.
\end{Cor}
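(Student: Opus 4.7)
The plan is to reduce the statement directly to the preceding Corollary, whose hypotheses ask for a $\CC^{k_0}$-CR mapping from a minimal generic submanifold into another generic submanifold which is $k_0$-nondegenerate \emph{as a mapping} in the sense of Definition \ref{FiniteMap}. In the present setting minimality of $M$ at $p_0$ is assumed, and $H$ is $\CC^{k_0}$ by hypothesis; so the only point to check is that a CR diffeomorphism between two $k_0$-nondegenerate generic submanifolds is automatically $k_0$-nondegenerate as a mapping at the corresponding base points. Once this is established, the preceding Corollary applies verbatim and delivers the conclusion.

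The verification of nondegeneracy of $H$ is precisely the content of the remark following Definition \ref{FiniteMap}. To see how it goes, I would choose a local basis $L_1,\dots,L_n$ of CR vector fields on $M$ near $p_0$ and a defining function $\rho^\prime$ of $M^\prime$ near $p_0^\prime$. Since $H$ is a CR diffeomorphism, the pushforwards $H_\ast L_j$ form a local basis of CR vector fields on $M^\prime$ near $p_0^\prime$, with coefficients determined by the (finite-order) derivatives of $H$. Writing the vectors
\begin{equation*}
L^\alpha \frac{\partial \rho^\prime}{\partial Z^\prime}\bigl(H(Z),\overline{H(Z)}\bigr)\Big|_{Z=p_0}
\end{equation*}
appearing in the definition of $E_k(p_0)$ for the mapping $H$ and comparing with the analogous vectors $(H_\ast L)^\alpha (\partial\rho^\prime/\partial Z^\prime)|_{p_0^\prime}$ entering the filtration of $\id_{M^\prime}$ at $p_0^\prime$ (cf.\ Remark \ref{Mult-FiniteNonDeg}), the two linear spans coincide as subspaces of $\C^{N^\prime}$. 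The assumption that $M^\prime$ is $k_0$-nondegenerate at $p_0^\prime$ therefore yields $E_{k_0}(p_0)=\C^{N^\prime}$, and in combination with the $k_0$-nondegeneracy of $M$ at $p_0$ (which forces the filtration to reach full rank only at step $k_0$) the strict inclusion $E_{k_0-1}(p_0)\subsetneq E_{k_0}(p_0)$ also follows. Hence $H$ is $k_0$-nondegenerate as a mapping at $p_0$.

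The main (and indeed only) obstacle is this bookkeeping step identifying the two flavours of $k_0$-nondegeneracy; all the analytic substance — the wedge extension from Tumanov's theorem, the $\M$-almost analytic implicit function theorem and the microlocal edge-of-the-wedge argument — is already encapsulated in Theorem \ref{Reflectionsprinciple1} and the preceding Corollary, and I would simply quote them to finish the proof.
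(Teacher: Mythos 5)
Your proposal is correct and follows exactly the route the paper intends: the corollary is deduced from the preceding one by invoking the observation (recorded in the remark after Definition \ref{FiniteMap}) that a CR diffeomorphism between two $k_0$-nondegenerate generic submanifolds is $k_0$-nondegenerate as a mapping, and your chain-rule identification of the two filtrations via $H_\ast L_j$ is the right way to verify that observation. One tiny bookkeeping remark: the strict inclusion $E_{k_0-1}(p_0)\subsetneq E_{k_0}(p_0)$ comes from the $k_0$-nondegeneracy of $M^\prime$ at $p_0^\prime$ (whose filtration you have just identified with that of $H$), not from that of $M$ --- though this is immaterial, since for applying Theorem \ref{Reflectionsprinciple1} only $E_{k}(p_0)=\C^{N^\prime}$ for some $k\leq k_0$ is needed.
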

Recently Berhanu-Xiao \cite{MR3405870} showed that it is possible to slightly weaken the prerequisites 
of the smooth reflection principle of Lamel. 
In particular, the source manifold $M$ can be chosen to be an abstract CR manifold.
Using the methods developed previously we can also generalize this result to the ultradifferentiable category.
\begin{Thm}
	Let $(M,\crb)$ be an abstract CR manifold and $M^\prime\subseteq\C^{N^\prime}$ be a generic submanifold,
	both of class $\{\M\}$.
	Furthermore let $p_0\in M$, $H:\,M\rightarrow M^\prime$ a $\CC^{k_0}$-CR mapping that is 
	$k_0$-nondegenerate at $p_0$ and there is a closed acute cone $\Gamma\subseteq\R^d$ such that
	$\WF_\M H\vert_{p_0}\subseteq \{0\}\times\Gamma$.
	Then $H$ is ultradifferentiable of class $\{\M\}$ near $p_0$.
\end{Thm}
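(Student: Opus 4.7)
The plan is to adapt the proof of Theorem \ref{Reflectionsprinciple1} to the abstract CR setting, substituting the wedge-pullback construction by an $\M$-almost analytic extension of $H$ built directly from the microlocal hypothesis. Since the claim is local, I would pick $\E_\M$-coordinates $(x,s)\in\R^{2n}\times\R^d$ on a neighborhood $W$ of $p_0=0$ and a local basis $L_1,\dots,L_n$ of ultradifferentiable CR vector fields, with characteristic forms $\theta^1,\dots,\theta^d$ chosen so that $T^0_{p_0}M$ is spanned by $ds_1,\dots,ds_d$. In these coordinates the hypothesis says that each component $u_j=Z_j^\prime\circ H$ of $H$ satisfies $\WF_\M u_j|_{p_0}\subseteq\{0\}\times\Gamma$ with $\Gamma\subseteq\R^d$ closed acute; since $u_j$ is CR, Theorem \ref{elliptic-regThm} gives $\WF_\M u_j\subseteq T^0M$, and shrinking $W$ slightly and enlarging $\Gamma$ to a still closed acute cone $\tilde\Gamma$ I may assume $\WF_\M u_j\subseteq W\times(\{0\}\times\tilde\Gamma)$ for every $j$.

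Next, I would produce the $\M$-almost analytic extension of $u_j$ into the cotangent-dual direction. Choose an open convex cone $C_+\subseteq\R^d$ with $\tilde\Gamma\subseteq C_+^\circ$ and a subcone $C$ with $\overline C\subseteq C_+\cup\{0\}$. The parameter-dependent converse of Theorem \ref{BV-M-WF} (obtained by combining the edge-of-the-wedge arguments of \cite{Fuerdoes1} with Theorem \ref{Theorem-M-BVWF}) then yields $\M$-almost analytic functions $h_j(x,s,t)$ on $W\times C_r$ of slow growth in $t$ such that $u_j=b_C(h_j)$. From this point on, the proof mirrors that of Theorem \ref{Reflectionsprinciple1}: the $k_0$-nondegeneracy of $H$ combined with the $\M$-almost analytic implicit function Theorem \ref{Lamel-implicit} produces a smooth map $\phi=(\phi_1,\dots,\phi_{N^\prime})$ satisfying \eqref{AlmostholomorphicEst} with
\[
u_j(x,s)=\phi_j\!\left(u(x,s),\bar u(x,s),\bigl(L^\beta\bar u(x,s)\bigr)_{|\beta|\leq k_0}\right)\!.
\]

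Then the microlocal reflection property \eqref{microreflprop} together with Theorem \ref{WF-MProperties}(4) gives $\WF_\M\bar u_j|_{p_0}\cup\WF_\M L^\beta\bar u_j|_{p_0}\subseteq\{0\}\times(-\tilde\Gamma)$, so the same boundary value representation furnishes $\M$-almost analytic extensions $\bar h_j(x,s,-t)$ and $\tilde h_{\ell,\beta}(x,s,t)$ on $W\times(-C)_r$. Substituting these into $\phi_j$ defines
\[
g_j(x,s,t):=\phi_j\!\left(h(x,s,-t),\bar h(x,s,-t),\bigl(\tilde h_{\ell,\beta}(x,s,t)\bigr)_{\ell,\beta}\right)\!,
\]
and the chain-rule computation \eqref{Reflection} from the proof of Theorem \ref{Reflectionsprinciple1}, together with \eqref{AlmostholomorphicEst} and the estimates of \cite[Section 3.3]{LamelMir}, shows that $g_j$ is $\M$-almost analytic on $W\times(-C)_r$ with boundary value $u_j$. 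Applying Theorem \ref{Theorem-M-BVWF} to both $h_j$ and $g_j$ yields $\WF_\M u_j|_{p_0}\subseteq(\R^{2n}\times C^\circ)\cap(\R^{2n}\times(-C)^\circ)=\R^{2n}\times\{0\}$, which intersected with the CR constraint $\WF_\M u_j|_{p_0}\subseteq\{0\}\times\R^d$ is empty; hence each $u_j$ is of class $\{\M\}$ near $p_0$.

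The main obstacle is the extension step producing $h_j$. In the embedded case it was automatic via pullback of the wedge extension through an $\M$-almost analytic diffeomorphism; in the abstract case it must be extracted purely from the wavefront set hypothesis, which requires a converse boundary-value theorem for $\M$-almost analytic functions in only part of the variables, together with a careful choice of local coordinates aligning $T^0_{p_0}M$ with the cotangent-dual of the extension direction. Once the $h_j$ are in place the argument transcribes the reflection principle from the embedded case essentially verbatim.
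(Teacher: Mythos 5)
Your proposal is correct and follows essentially the same route as the paper: use the closedness of $\WF_\M H$ to propagate the microlocal hypothesis to a neighbourhood, produce an $\M$-almost analytic extension of $H$ via the converse boundary-value theorem, and then run the reflection argument of Theorem \ref{Reflectionsprinciple1} verbatim. The ``main obstacle'' you flag (a partial-variable converse of Theorem \ref{BV-M-WF}) is handled in the paper more simply: one applies Theorem \ref{BV-M-WF} in all $2n+d$ variables to get an extension $\tilde F$ on $\Omega\times\Gamma_2$ with $\Gamma_2\subseteq\R^{N}$, and then restricts to the slice $\{0\}\times\Gamma_3$ with $\Gamma_3\subseteq\R^d$ and $\{0\}\times\Gamma_3\subseteq\Gamma_2$, which yields exactly the extension in the $s$-directions that you need.
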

\begin{proof}
	Since the assertation is local we will work on a small chart neighbourhood 
	$\Omega=U\times V\times W\subseteq\R^n\times\R^n\times\R^d$ of $M$ of $p_0=0$. 
	Here $n$ denotes the CR-dimension of $M$ whereas $d$ is the CR-codimension of $M$. 
	We use coordinates $(x,y,s)$ on $\Omega$ and write $z=x+iy$.
	In these coordinates a local  basis of the CR vector fields of $M$ is given by
	\begin{equation*}
	L_j=\frac{\partial}{\partial \bar{z}_j}+\sum_{k=1}^n a_{jk}\frac{\partial}{\partial z_k}
	+\sum_{\alpha=1}^db_{j\alpha}\frac{\partial}{\partial s_\alpha}\qquad j=1,\dotsc,n.
	\end{equation*}
	From the assumptions we conclude that if $\Omega$ is small enough that there is an open, convex cone 
	$\Gamma_1\subseteq\R^{N}\!\setminus\!\{0\}$ such that 
	\begin{equation}
	\WF_\M H=\bigcup_{j=1}^{N^\prime}\WF_\M H_j\subseteq \Omega\times \Gamma_1^\circ
	\end{equation}
	due to the closedness of $\WF_\M H$ in $T^{\ast}M\!\setminus\!\{0\}$. 
	If we further shrink $\Omega$ (resp.\ $U$, $V$ and $W$) and choose an open convex cone 
	$\Gamma_2\subseteq\R^{N}\!\setminus\!\{0\}$  
	such that $\overline{\Gamma}_2\subseteq\Gamma_1\cup\{0\}$ 
	we have by Theorem \ref{BV-M-WF} that there is an $\M$-almost extension $\tilde{F}$ 
	with slow growth of $H$ onto  $\Omega\times\Gamma_2$. If we now choose an open convex cone
	$\Gamma_3\subseteq\R^d\!\setminus\!\{0\}$ with $\{0\}\times\Gamma_3\subseteq\Gamma_2$ we infer that
	\begin{equation*} 
	F:=\tilde{F}\vert_{\Omega\times(\{0\}\times\Gamma_3)}
	\end{equation*}
	is an $\M$-almost analytic function on $U\times V\times W\times \Gamma_3$ with values in $\C^{N^\prime}$ and
	\begin{equation*}
	\lim_{\Gamma_3\ni t\rightarrow 0} F(\,.\,,\,.\,,\,.\,,t)=H
	\end{equation*}
	in the sense of distributions.
	
	Let $\rho^\prime=(\rho_1^\prime,\dotsc,\rho_{N^\prime}^\prime)$ 
	be an ultradifferentiable defining function of $M^\prime$ near $p^\prime_0=H(p_0)$.
	As before in the proof of Theorem \ref{Reflectionsprinciple1} we conclude that there are 
	ultradifferentiable functions $\Phi_{\ell,\alpha}(Z^\prime,\bar{Z}^\prime, W)$ for $\lvert\alpha\rvert\leq k_0$,
	$\ell=1,\dotsc,d^\prime$, defined in a neighbourhood of
	$\{0\}\times\C^{K_0}\subset\C^{N^\prime}\times\C^{K_0}$ and polynomial in the last 
	$K_0=N^\prime\lvert \{\alpha\in\N^{n^\prime}_0\mid \lvert\alpha\rvert\leq k_0\}\rvert$ variables.
	From now on we can follow the proof of Theorem \ref{Reflectionsprinciple1} verbatim.
\end{proof}
\section{Ultradifferentiable regularity of infinitesimal CR automorphisms}
%\section{Infinitesimal CR automorphisms and multipliers}%\label{infinCRIntro}
In this  section we show how the results in \cite{MR3593674} 
concerning the smoothness of infinitesimal CR automorphisms transfer to the ultradifferentiable setting.
Since our presentation here differs in some details from that given in
\cite{MR3593674} we first recall the framework we are going to work in.
In this section $(M,\crb)$ is always an ultradifferentiable abstract CR manifold
of class $\{\M\}$.
\begin{Def}
	Let $U\subseteq M$ an open subset and $X: U\rightarrow TM$ a vector field of class $\CC^1$.
	We say that $X$ is an infinitesimal CR automorphism iff its flow $H^\tau$, defined for small $\tau$, has 
	the property, that there is $\eps >0$ such that $H^\tau$ is a CR mapping provided that $\lvert\tau\rvert\leq \eps$.
\end{Def}
We need for the proofs of the regularity results a more suitable characterization of infinitesimal CR automorphisms.
We call a section $\mathfrak{Y}\in\Gamma(M,(T^\prime M)^\ast)$ a holomorphic vector field on $M$.

Apparently every vector field $X\in\Gamma(M,TM)$ gives rise to a holomorphic vector field by first extending $X$ 
to $\C TM$ and then restricting the extension to $T^\ast M$.
For a partial converse, we recall from \cite{MR3593674} the following purely algebraic result.
\begin{Lem}
	Let $\mathfrak{Y}\in\Gamma(M,(T^\prime M)^\ast)$. 
	Then there exists a unique vector field $X\!\in\!\Gamma(M,TM)$
	such that $\mathfrak{Y}$ is induced by $X$ if and only if $\mathfrak{Y}(\tau)=\overline{\mathfrak{Y}(\tau)}$
	for all characteristic forms $\tau$.
\end{Lem}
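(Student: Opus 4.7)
The plan is to reduce the statement to pointwise linear algebra in $\C T_pM$ for each $p\in M$, and then observe that the pointwise construction is functorial so that regularity is preserved in $X$. First I would identify the fibre $(T^\prime_pM)^\ast$ with the quotient $\C T_pM/\crb_p$: the natural pairing of $\C T_pM$ against $T^\prime_pM$ descends to a linear map $\C T_pM\to(T^\prime_pM)^\ast$, and since $T^\prime_pM=\crb_p^\perp$ one has by biduality $(\crb_p^\perp)^\perp=\crb_p$, so this map is surjective with kernel exactly $\crb_p$. Under this identification, a real $X\in T_pM$ induces $\mathfrak{Y}_p$ precisely when $X$, viewed inside $\C T_pM$, represents the coset corresponding to $\mathfrak{Y}_p$.

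The forward direction is then immediate: for $X\in T_pM$ real and $\tau\in T^0_pM$ a real characteristic form, the pairing $\mathfrak{Y}_p(\tau)=\tau(X)$ is a real number, so $\mathfrak{Y}_p(\tau)=\overline{\mathfrak{Y}_p(\tau)}$.

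For the converse I would pick any $Y\in\C T_pM$ with $[Y]=\mathfrak{Y}_p$ under the above identification. The hypothesis becomes $\tau(Y-\bar Y)=0$ for every real $\tau\in T^0_pM$. Since characteristic forms are the real $1$-forms vanishing on the real subspace $(\crb_p\oplus\bcrb_p)\cap T_pM$, and since for a complex $w=w_1+iw_2$ the condition $\tau(w)=0$ for all real $\tau\in T^0_pM$ is equivalent to $w_1,w_2$ lying in that real subspace, this yields $Y-\bar Y\in\crb_p\oplus\bcrb_p$ (using $\crb\cap\bcrb=\{0\}$). Writing $Y-\bar Y=V_1+V_2$ with $V_1\in\crb_p$, $V_2\in\bcrb_p$ and conjugating, the uniqueness of this direct sum decomposition forces $V_2=-\bar V_1$. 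Then $X:=Y-V_1$ satisfies $X-\bar X=(Y-\bar Y)-V_1+\bar V_1=0$, so $X\in T_pM$, and $X\equiv Y\pmod{\crb_p}$, so $X$ induces $\mathfrak{Y}_p$. Uniqueness at $p$ is equally short: if two real vectors induce $\mathfrak{Y}_p$, their difference lies in $T_pM\cap\crb_p\subseteq\crb_p\cap\bcrb_p=\{0\}$.

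The global statement follows because the pointwise construction is carried out with ultradifferentiable operations on the data. Locally, an $\E_\M$-frame for $\crb$ gives an $\E_\M$-splitting of $\C TM\to(T^\prime M)^\ast$, hence an $\E_\M$-lift $Y$ of $\mathfrak{Y}|_U$; the direct sum $\crb\oplus\bcrb$ is an $\E_\M$-subbundle of $\C TM$, so the projection of $Y-\bar Y$ onto $\crb$ produces $V_1\in\E_\M(U,\crb)$, and $X=Y-V_1$ is an $\E_\M$-section on $U$. Pointwise uniqueness then glues the local constructions to a global $X\in\Gamma(M,TM)$ of the expected regularity. I do not anticipate a genuine obstacle; the only step requiring careful bookkeeping is the algebraic identity equating the hypothesis on $\mathfrak{Y}$ with $Y-\bar Y\in\crb_p\oplus\bcrb_p$, i.e.\ identifying $(T^0_pM)\otimes\C$ with $(\crb_p\oplus\bcrb_p)^\perp$ in $\C T^\ast_pM$.
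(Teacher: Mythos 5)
Your argument is correct: the identification $(T'_pM)^\ast\cong\C T_pM/\crb_p$, the reduction of the reality hypothesis on $\mathfrak{Y}$ to $Y-\bar Y\in\crb_p\oplus\bcrb_p$, the conjugation-symmetric decomposition forcing $V_2=-\bar V_1$ so that $X=Y-V_1$ is real and induces $\mathfrak{Y}$, the uniqueness via $T_pM\cap\crb_p\subseteq\crb_p\cap\bcrb_p=\{0\}$, and the local $\E_\M$-frame argument for regularity all check out (the hypothesis $\crb\cap\bcrb=\{0\}$ is what makes the sum direct and the decomposition unique, rather than what puts $Y-\bar Y$ into the sum, but that is only a matter of phrasing). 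The paper itself gives no proof of this lemma --- it is recalled from \cite{MR3593674} as a purely algebraic fact --- and your pointwise linear-algebra argument is exactly the standard one that fills it in.
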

%Indeed, since $(\C TM)^{\ast}=\crb^\perp +\bcrb^\perp$ and $\C T^0 M=(\crb \oplus\bcrb)^\perp$, we can decompose any form
%$\omega = \alpha + \bar \beta$ with $\alpha,\beta $ holomorphic forms in a nonunique manner. 
%Thus $\mathfrak{Y}$ gives rise to a real vector field $X $ %= \real \mathfrak{Y}$ 
%via 
%\begin{equation*}
%X(\omega) =%\real \mathfrak{Y}(\omega)=
%\frac{1}{2}\Bigl(\alpha \bigl(\mathfrak{Y}\bigr)+\overline{\beta \bigl(\mathfrak{Y}\bigr)}\Bigr)
%\end{equation*}
%which is well defined provided that $\mathfrak{Y}(\bar{\tau})=\overline{\mathfrak{Y}(\tau)}$ for all $\tau\in\Gamma(M,\C T^0 M)$ or equivalently, that
%$\mathfrak{Y} (\tau)=\overline{\mathfrak{Y}(\tau)}$ for all $\tau\in  \Gamma(M,T^0 M)$, both of which are equivalent 
%to the definition of $X$ above being independent of the decomposition %$\omega=\alpha+\bar{\beta}$. 
%Note that any real vector field $X$ defines by restriction an element of $\Gamma(M,(T'M)^\ast)$ (whose real part it is); 
%We shall not distinguish between $X$ as a real vector field and as an element of $\Gamma(M,(T'M)^\ast)$.
From now on we shall not distinguish between $X$ being a real vector field or a holomorphic vector field.

We recall the well-known identity, see e.g.\ \cite{MR1834454},
\begin{equation*}
\mathcal{L}_X\alpha(Y)=d\alpha (X,Y)+Y\alpha (X)=X\alpha(Y)-\alpha([X,Y]),
\end{equation*}
which holds for arbitrary complex vector fields $X,Y$ and complex forms $\alpha$ on smooth manifolds.

We conclude that accordingly the Lie derivative 
\begin{equation*}
\mathcal{L}_L\omega(\,.\,)=d\omega(L,\,.\,)
\end{equation*}
of a holomorphic form $\omega$ with respect to a CR vector field $L$ is again a holomorphic form.
It is now possible to make the following definition. We shall say that a holomorphic vector field
$\mathfrak{Y}\in\Gamma(M,(T^\prime M)^\ast)$ is CR iff
\begin{equation*}
L\omega(\mathfrak{Y})=d\omega(L,\mathfrak{Y})
\end{equation*}
for every CR vector field $L$ and holomorphic form $\omega$. 
In particular a real vector field $X$ is CR if and only if
\begin{equation*}
\omega([L,X])=0
\end{equation*}
for all CR vector fields $L$ and holomorphic forms $\omega$.
We recall from \cite{MR3593674} the following fact.
\begin{Prop}
	If $X$ is an infinitesimal CR automorphism on $M$, 
	then $X$ considered as a holomorphic vector field, i.e.\ $X\in\CC^1(M,(T^\prime M)^\ast)$ is CR.
\end{Prop}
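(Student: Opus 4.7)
The plan is to verify the definitional CR condition for the induced holomorphic vector field, namely that $\omega([L,X])=0$ for every CR vector field $L$ and every holomorphic form $\omega$, and then to close the loop with the identity $\mathcal{L}_L\omega(\cdot)=d\omega(L,\cdot)+(\cdot)\omega(L)=L\omega(\cdot)-\omega([L,\cdot])$ already recalled in the text.

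First, I would translate the hypothesis on the flow into a statement about the CR bundle. Since $H^\tau$ is a CR diffeomorphism for $|\tau|\leq\eps$, the pushforward $H^\tau_\ast$ maps sections of $\crb$ to sections of $\crb$, i.e.\ $H^\tau_\ast\crb_p\subseteq\crb_{H^\tau(p)}$. Equivalently, for any $L\in\E(M,\crb)$, the time-dependent family of complex vector fields $(H^{-\tau})_\ast L$ lies in $\crb$ pointwise, for all $|\tau|\leq\eps$. Differentiating at $\tau=0$ and using that $\crb$ is a (closed) subbundle of $\C TM$, the resulting Lie derivative
\begin{equation*}
\mathcal{L}_X L=[X,L]=\lim_{\tau\to 0}\frac{(H^{-\tau})_\ast L-L}{\tau}
\end{equation*}
is again a section of $\crb$. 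Consequently $[L,X]=-[X,L]\in\Gamma(M,\crb)$.

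Now fix any holomorphic form $\omega\in\E(M,T^\prime M)=\E(M,\crb^\perp)$ and any CR field $L$. From the previous step $[L,X]\in\crb$, and since $\omega$ annihilates $\crb$ we obtain
\begin{equation*}
\omega([L,X])=0.
\end{equation*}
Moreover $\omega(L)=0$ for the same reason, so $X\omega(L)=0$. Plugging these into the two identities for $\mathcal{L}_L\omega$ recalled just before the proposition yields
\begin{equation*}
L\omega(X)=\mathcal{L}_L\omega(X)+\omega([L,X])=d\omega(L,X)+X\omega(L)+\omega([L,X])=d\omega(L,X),
\end{equation*}
which is exactly the condition that $X$, viewed as a holomorphic vector field, be CR.

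The only nontrivial step is the differentiation argument that extracts $[X,L]\in\Gamma(\crb)$ from $H^\tau_\ast\crb\subseteq\crb$. It requires enough regularity of the flow to form the Lie bracket; this is supplied by the $\CC^1$-assumption on $X$, so that the pullback family $\tau\mapsto (H^{-\tau})_\ast L$ is differentiable in $\tau$ at $\tau=0$ with value $[X,L]$, and closedness of $\crb$ fiberwise preserves the inclusion in the limit. Once this is in hand, the rest is a formal consequence of Cartan's identity and the pairing $\omega(L)=0$ between holomorphic forms and CR vector fields.
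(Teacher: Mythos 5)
Your proof is correct; the paper itself states this proposition without proof (it only recalls the fact from the reference on infinitesimal CR automorphisms), and your argument is the standard one: differentiate the flow condition $H^\tau_\ast\crb\subseteq\crb$ at $\tau=0$ to obtain $[X,L]\in\Gamma(M,\crb)$, then pair with holomorphic forms $\omega\in\Gamma(M,\crb^\perp)$ and invoke the Cartan identity. Your remark that the $\CC^1$ hypothesis is exactly what makes the differentiation of $\tau\mapsto (H^{-\tau})_\ast L$ legitimate, together with fiberwise closedness of the subbundle, addresses the only delicate point.
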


We are now able to generalize the notion of infinitesimal CR automorphism. 
To this end consider the space $\D^\prime(M,(T^\prime M)^\ast)$ of distributions with values in $(T^\prime M)^\ast$.
\begin{Def}
	An infinitesimal CR diffeomorphism with distributional coefficients on $M$
	is a generalized holomorphic vector field
	$\mathfrak{Y}\in\D^\prime(M,(T^\prime M)^\ast)$ that satisfies
	\begin{equation}\label{CRinfeq1}
	L\omega(\mathfrak{Y})=(\mathcal{L}_L\omega)(\mathfrak{Y})
	\end{equation}
	for every CR vector field $L$ and holomorphic form $\omega$ and
	\begin{equation}\label{InfRefl}
	\mathfrak{Y}(\tau)=\overline{\mathfrak{Y}(\tau)}
	\end{equation}
	for all characteristic forms $\tau$.
\end{Def}
Note that \eqref{CRinfeq1} is in fact a CR equation for $\mathfrak{Y}$.
If $U\subseteq M$ is an open subset of $M$ then we say that $\mathfrak{Y}\in\D^\prime (M,(T^\prime M)^\ast)$ 
is an infinitesimal CR automorphism on $U$ iff \eqref{CRinfeq1} and \eqref{InfRefl} hold for all local sections
$L\in\E_\M(U,\crb\vert_U)$ and $\theta\in\E_\M(U,T^0M\vert_U)$, respectively.
Let the subset $U\subset M$ is small enough such that there is a local basis 
$L_1,\dotsc, L_n$ of CR vector fields and also a local basis
$\{\omega^1,\dotsc,\omega^N\}$ of the space of holomorphic forms.
We recall that locally a distribution $\mathfrak{Y}\in\D^\prime(M,(T^\prime M)^\ast)$ is of the form
\begin{equation}\label{2LokalRep}
\mathfrak{Y}\vert_U=\sum_{j=1}^N X_j\omega_j
\end{equation}
with $X_j\in\D^\prime(U)$. 
%Note that we can expand each $\theta_j =\sum_{\ell=1}^NA_\ell^j\omega^\ell$.
We introduce also the following operators on $U$
\begin{equation*}
\mathbf{L}_j=L_j\cdot\mathbf{Id}_N
=\begin{pmatrix}
L_j & &0\\
&\ddots \\
0 & & L_j
\end{pmatrix}
\end{equation*}
and note that since $d\omega^k (L_j,\,.\,)$ is again a holomorphic form we have
\begin{equation*}
d\omega^k(L_j,\,.\,)=\sum_{\ell =1}^N B_{k,\ell}^j \omega^\ell
\end{equation*}
with $B_{j,\ell}^k\in\E_\M (U)$.
We observe that $\mathfrak{Y}$ is CR on $U$ if and only if
\begin{equation*}
L_j X_k=L_j\bigl(\omega^k(\mathfrak{Y})\bigr)=d\omega^k\bigl(L_j,\mathfrak{Y})\bigr)=\sum_{\ell=1}^NB_{k,\ell}^j X_\ell
\end{equation*}
for all $1\leq j\leq n$ and $0\leq k\leq N$.
We set 
\begin{equation*}
B_j=
\begin{pmatrix}
B_{j,1}^1&\dots &B_{j,N}^1\\
\vdots & & \vdots\\
B_{j,1}^N &\dots &B_{j,N}^N
\end{pmatrix}.
\end{equation*}

Furthermore, using its local representation \eqref{2LokalRep}, we can identify 
$\mathfrak{Y}$ with the vector 
$X=(X_1,\dots,X_N)$. 
Hence \eqref{CRinfeq1} turns into
\begin{align*}
\mathbf{L}_j X&=B_j\cdot X%\label{CRinfeq2}\\
\\
\intertext{or}
P_j X&=0\\
\intertext{respectively, where}
P_j&= \mathbf{L}_j - B_j
\end{align*}
In particular we infer from above and Theorem \ref{elliptic-regThm} that
\begin{equation}\label{InfAutWF}
\WF_\M \mathfrak{Y}\subseteq T^0M.
\end{equation}

%For the formulation of the main regularity results we need one more definition.
%To begin we introduce for the ultradifferentiable CR manifold $M$ the following sequence of spaces of sections.

%\section{Regularity of infinitesimal CR automorphisms}
\begin{Def}
	Let $(M,\crb)$ be an ultradifferentiable abstract CR manifold of class $\{\M\}$, 
	and $\mathfrak{Y}$ an infinitesimal CR diffeomorphism with distributional coefficients of $M$. 
	%see section \ref{infinCRIntro}.
	
	We say  that $\mathfrak{Y}$ extends microlocally to a wedge with edge $M$
	iff there exists a set $\Gamma \subseteq T^0 M$ such that for each $p\in M$, 
	the fiber $\Gamma_p \subseteq T^0_p M\!\setminus \!\{0\}$ is a closed, convex cone, 
	and 
	\begin{equation*}
	\WF_{\M} (\omega(\mathfrak{Y})) \subseteq \Gamma
	\end{equation*}
	for every  holomorphic form $\omega \in \E_{\M}(M,T'M)$.
\end{Def}
Note that the condition $\Gamma\subseteq T^0M$ is not as strict as it seems, because $\WF_\M(\omega(\mathfrak{Y}))\subseteq T^0M$ by \eqref{InfAutWF}.
\begin{Thm}
	\label{thm:multipliers} Let $(M,\crb)$ be an ultradifferentiable abstract  CR structure of class $\{\M\}$, 
	and $\mathfrak{Y}$ an infinitesimal CR diffeomorphism of $M$ with 
	distributional coefficients 
	which extends microlocally to a wedge with edge $M$. 
	
	Then, for any $\omega\in E$, the evaluation $\omega(\mathfrak{Y})$ is ultradifferentiable, 
	and for any $\lambda\in\mathcal{S}$, the vector field $\lambda \mathfrak{Y}$ is also of class $\{\M\}$.
\end{Thm}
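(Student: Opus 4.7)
The plan is to prove the two assertions in sequence. I would first establish that $\omega(\mathfrak{Y})\in\E_\M$ for every $\omega\in E$ by induction on the filtration degree of $E_k$, and then deduce the multiplier statement from the first part via Cramer's rule.

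For the base case ($k=0$), let $\theta\in\E_\M(M,T^0M)$ be a characteristic form. The reflection identity \eqref{InfRefl} says $\theta(\mathfrak{Y})=\overline{\theta(\mathfrak{Y})}$ is real-valued, and the microlocal reflection property \eqref{microreflprop} then forces $\WF_\M\theta(\mathfrak{Y})$ to be symmetric about the origin in each cotangent fiber. On the other hand, the wedge extension hypothesis places $\WF_\M\theta(\mathfrak{Y})$ inside a fiberwise closed convex cone $\Gamma$ in $T^0M\!\setminus\!\{0\}$ whose fibers, being wedge cones, contain no pair of opposite covectors. Thus the wavefront set lies in $\Gamma\cap(-\Gamma)=\emptyset$, which by Theorem \ref{WF-MProperties}(2) means $\theta(\mathfrak{Y})\in\E_\M$.

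For the inductive step, suppose $\omega'(\mathfrak{Y})\in\E_\M$ for every $\omega'\in E_{k-1}$. A generator of $E_k$ has the form $\omega=\mathcal{L}_L\omega'$ with $L\in\E_\M(M,\crb)$ and $\omega'\in E_{k-1}$, and the infinitesimal CR identity \eqref{CRinfeq1} gives
\begin{equation*}
\omega(\mathfrak{Y})=(\mathcal{L}_L\omega')(\mathfrak{Y})=L\bigl(\omega'(\mathfrak{Y})\bigr).
\end{equation*}
By induction $\omega'(\mathfrak{Y})\in\E_\M$, and since $L$ has $\E_\M$-coefficients and $\E_\M$ is closed under derivation, $L(\omega'(\mathfrak{Y}))\in\E_\M$. $\C$-linearity and absorption of $\E_\M$-factors into $\theta$ then give $\omega(\mathfrak{Y})\in\E_\M$ for every $\omega\in E$.

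For the multiplier part, I would work locally and pick an ultradifferentiable basis $\tilde{\mathfrak{Y}}_1,\dots,\tilde{\mathfrak{Y}}_N$ of $(T'M)^\ast$ with dual basis $\omega^1,\dots,\omega^N\in\E_\M(U,T'M)$; write $\mathfrak{Y}=\sum_\ell X_\ell\tilde{\mathfrak{Y}}_\ell$ with $X_\ell\in\D'(U)$. Given $\lambda=\det(V^i(\mathfrak{Y}_j))\in\mathcal{S}^k$ with $V^i\in E_k$ and $\mathfrak{Y}_j\in\E_\M(M,(T'M)^\ast)$, expanding $\mathfrak{Y}_j=\sum_r\omega^r(\mathfrak{Y}_j)\tilde{\mathfrak{Y}}_r$ factors the defining matrix as $BA^T$, where $B_{ir}=V^i(\tilde{\mathfrak{Y}}_r)$ and $A_{jr}=\omega^r(\mathfrak{Y}_j)\in\E_\M$. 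Thus $\lambda=\det(B)\det(A)$, so it suffices to show that $\det(B)X_\ell\in\E_\M$ for each $\ell$. Using $V^i(\mathfrak{Y})=\sum_r B_{ir}X_r$, Cramer's rule gives
\begin{equation*}
\det(B)\cdot X_\ell=\det(B^{(\ell)}),
\end{equation*}
where $B^{(\ell)}$ is $B$ with its $\ell$-th column replaced by $(V^i(\mathfrak{Y}))_i$. All entries of $B^{(\ell)}$ are ultradifferentiable — the original columns by construction, the replaced column by the first part of the theorem (since $V^i\in E$) — so $\det(B^{(\ell)})\in\E_\M$, and hence $\lambda\mathfrak{Y}$ is of class $\{\M\}$.

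I expect the main obstacle to be the base case, where the geometric hypothesis actually enters: the symmetry-from-reality argument together with the acuteness of the wedge cone must jointly kill the entire wavefront fiber of $\theta(\mathfrak{Y})$. Once this is established, the inductive step is a direct consequence of the CR equation \eqref{CRinfeq1} combined with closedness of $\E_\M$ under differentiation, and the multiplier statement becomes a purely linear-algebraic corollary.
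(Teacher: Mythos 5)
Your proof is correct, and it rests on the same two pillars as the paper's: the reflection identity \eqref{InfRefl} combined with the acuteness of the fiber cones $\Gamma_p$ to kill the wavefront set, and an adjugate/Cramer computation to extract $\lambda X_\ell$. The organization, however, is genuinely different and worth noting. The paper never isolates the order-zero case: it applies $\mathcal{L}^\alpha$ to \eqref{InfRefl} for every $\alpha$, keeps both sides as distributions (the left-hand side $\sum_\ell A^{\alpha,\nu}_\ell X_\ell$ ``extends above'', the right-hand side $\sum C^{\beta,\nu}_\ell L^\beta\bar X_\ell$ ``extends below''), and re-invokes the wavefront dichotomy once more after multiplying the matrix identity by the classical adjoint. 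You instead run the microlocal argument exactly once, for $\theta(\mathfrak{Y})$ itself --- reality makes $\WF_\M\theta(\mathfrak{Y})$ fiberwise symmetric by \eqref{microreflprop}, while the wedge hypothesis confines it to a closed convex cone in $T^0_pM\!\setminus\!\{0\}$, which contains no antipodal pair, so the wavefront set is empty --- and then \eqref{CRinfeq1} turns the inductive step into the triviality that $(\mathcal{L}_L\omega')(\mathfrak{Y})=L(\omega'(\mathfrak{Y}))$ is a derivative of an $\E_\M$ function. Your multiplier step is then purely algebraic, since the replaced column $(V^i(\mathfrak{Y}))_i$ in $B^{(\ell)}$ already consists of $\E_\M$ functions by the first assertion; the factorization $\lambda=\det(B)\det(A)$ handles general elements of $\mathcal{S}^k$, where the paper restricts to the generators $D(\underline\alpha,r)$ and appeals to their generating property. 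The net effect is the same theorem, but your arrangement makes transparent that the only analytic input beyond Theorem \ref{elliptic-regThm} and \eqref{microreflprop} occurs at order zero.
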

\begin{proof}
	Since the assertion is local we will work in a suitable small open set $U\subseteq M$ such that there are 
	local bases $L_1,\dotsc,L_n$ of $\E_\M(U,\crb)$ and $\omega^1,\dotsc,\omega^N$ of $\E_\M(U,T^\prime M)$,
	respectively.
	We recall that we can represent $\mathfrak{Y}$ on $U$ by \eqref{2LokalRep} or 
	by $X=(X_1,\dotsc,X_N)\in\D^\prime(U,\C^N)$.
	By assumption we know that there is a closed convex cone $\Gamma\subseteq T^0M\!\setminus\!\{0\}$ 
	such that $\WF_\M X_j\subseteq \Gamma$ for each $j=1,\dotsc,N$. 
	If we set $W^+=(\Gamma)^c\subseteq T^0M\!\setminus\!\{0\}$, then $\WF_\M X_j\cap W^+=\emptyset$
	for all $j=1,\dotsc,N$. We may refer to this fact by saying that $X_j$ \emph{extends above}.
	On the other hand, if we analogously put $W^-=(-\Gamma)^c\subseteq T^0M\!\setminus\!\{0\}$ then
	$\WF_\M\bar{X}_j\cap W^-=\emptyset$ by \eqref{microreflprop}; 
	we say that $\bar{X}_j$ \emph{extends below}.
	
	Furthermore let $\{\theta^1,\dotsc, \theta^d\}$ be a generating set of $\E_\M(U,T^0M)$ and
	recall \eqref{e:exptheta}, i.e.
	\begin{equation*}
	\mathcal{L}^\alpha\theta^\nu = \sum_{\ell=1}^N A^{\alpha, \nu}_\ell \omega^\ell
	\end{equation*}
	with $A^{\alpha,\nu}_\ell\in\E_\M(U)$ for $\alpha\in\N^n_0$ and $\nu=1,\dotsc, d$.
	In particular, $\eqref{InfRefl}$, i.e.\ $\theta (\mathfrak{Y})=\overline{\theta(\mathfrak{Y})}$, turns into
	\begin{align*}
	\sum_{\ell =1}^N A^{0,\nu}_\ell X_\ell &=\sum_{\ell =1}^N \bar{A}_\ell^{0,\nu}\bar{X}_\ell\\
	\intertext{and applying $\mathcal{L}^\alpha$ to \eqref{InfRefl} yields}
	\sum_{\ell =1}^N A^{\alpha,\nu}_\ell X_\ell &=\sum_{\ell =1}^N
	\sum_{\lvert\alpha\rvert\leq\lvert\alpha\rvert} C^{\beta ,\nu}_\ell L^\beta\bar{X}_\ell,
	\end{align*}
	where $C^{\beta,\nu}_\ell\in\E_\M(U)$.
	Note that in both equations above the left hand side extends above, while the right hand side extends below.
	
	Now choose any $N$-tuple $\underline{\alpha}=(\alpha^1,\dotsc,\alpha^N)\in\N_0^{Nn}$ of multi-indices
	with $\lvert\alpha\rvert\leq k$ for all $j=1,\dotsc,N$ and $r=(r_1,\dotsc,r_N)\in\{1,\dotsc,d\}^N$.
	Then we have
	\begin{equation*}
	\begin{pmatrix}
	A^{\alpha^1,r_1}_1 & \dots & A^{\alpha^1,r_1}_N \\
	\vdots &\ddots & \vdots\\
	A^{\alpha^N,r_N}_1 & \dots & A^{\alpha^N,r_N}_N 
	\end{pmatrix}
	\begin{pmatrix}
	X_1 \\ \vdots \\ X_N
	\end{pmatrix} = 
	\begin{pmatrix}
	\sum
	C^{\alpha^1, \ell}_\beta L^\beta \bar{X}_\ell \\ 
	\vdots \\
	\sum
	C^{\alpha^N, \ell}_\beta L^\beta \bar{X}_\ell
	\end{pmatrix}.
	\end{equation*}
	If we multiply the equation with the classic adjoint of the matrix
	\begin{equation*}
	\begin{pmatrix}
	A^{\alpha^1,r_1}_1 & \dots & A^{\alpha^1,r_1}_N \\
	\vdots &\ddots & \vdots\\
	A^{\alpha^N,r_N}_1 & \dots & A^{\alpha^N,r_N}_N 
	\end{pmatrix}
	\end{equation*}
	then we obtain 
	\begin{equation*}
	D(\underline{\alpha},r)  X_j = \sum_{\substack{|\beta|\leq k \\ \ell=1,\dots, N}} D^{\underline{\alpha},r}_{\beta,j } L^\beta \bar X_j
	\end{equation*}
	for each $j=1,\dotsc, N$ where the $D^{\underline{\alpha},r}_{\beta,j }$ are ultradifferentiable functions 
	on $U$. 
	It follows that the right hand side of this equation  extends below, 
	whereas the left hand side obviously extends above.
	Hence $\WF_\M D(\underline{\alpha},r)X=\emptyset$.
	We conclude that $\lambda X\in\E_\M (U)$ for any $\lambda\in\mathcal{S}^k(U)$
	since $\mathcal{S}^k(U)$ is generated by the functions $D(\underline{\alpha},r)$.
\end{proof}

The next statement is an obvious corollary of Theorem \ref{thm:multipliers}.
\begin{Cor}
	\label{cor:finnondeg}  
	Let $(M,\crb)$ be  finitely nondegenerate
	and $X$ an %locally integrable 
	infinitesimal CR diffeomorphism of $M$ with 
	distributional coefficients 
	which extends microlocally to a wedge with edge $M$. 
	Then $X$ is ultradifferentiable of class $\{\M\}$.
\end{Cor}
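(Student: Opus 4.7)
The plan is to argue locally at each point $p\in M$ and exploit the finite nondegeneracy together with Theorem \ref{thm:multipliers} and the inverse-closedness of $\E_\M$.

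First, fix $p\in M$. Since $(M,\crb)$ is finitely nondegenerate, there exists $k_0=k_0(p)$ such that $M$ is $k_0$-nondegenerate at $p$. By Remark \ref{Mult-FiniteNonDeg}, this is equivalent to the existence of an open neighbourhood $U\subseteq M$ of $p$ and a multiplier $\lambda\in\mathcal{S}^{k_0}(U)$ with $\lambda(p)\neq 0$. Shrinking $U$ if necessary, we may assume $\lambda$ does not vanish on $U$.

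Next, since $X$ extends microlocally to a wedge with edge $M$, Theorem \ref{thm:multipliers} applied to the multiplier $\lambda\in\mathcal{S}(U)\supseteq \mathcal{S}^{k_0}(U)$ yields that the vector field $\lambda X$ is of class $\{\M\}$ on $U$. Now by Theorem \ref{CMStability}(1) the Denjoy-Carleman class $\E_\M(U)$ is inverse closed, so $1/\lambda \in \E_\M(U)$. Multiplying coefficient-wise (in a local trivialization as in the proof of Theorem \ref{thm:multipliers}), we conclude
\begin{equation*}
X = \frac{1}{\lambda}\cdot (\lambda X) \in \E_\M\bigl(U,(T'M)^\ast\bigr).
\end{equation*}

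Since $p\in M$ was arbitrary and being of class $\{\M\}$ is a local property, $X$ is ultradifferentiable of class $\{\M\}$ on all of $M$. There is no serious obstacle here: all the work was already done in Theorem \ref{thm:multipliers}, and this corollary simply combines it with inverse-closedness and the characterization of finite nondegeneracy by non-vanishing multipliers from Remark \ref{Mult-FiniteNonDeg}.
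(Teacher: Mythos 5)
Your argument is correct and is exactly the intended one: the paper states this as an ``obvious corollary'' of Theorem \ref{thm:multipliers} without writing out the details, and the details are precisely what you supply — finite nondegeneracy gives, via Remark \ref{Mult-FiniteNonDeg}, a locally non-vanishing multiplier $\lambda$, Theorem \ref{thm:multipliers} makes $\lambda X$ of class $\{\M\}$, and inverse-closedness of $\E_\M$ lets you divide by $\lambda$. Nothing further is needed.
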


However, the condition that $M$ is actually finitely nondegenerate is
far too restrictive. 
We shall say that $(M,\crb)$ is CR-regular 
if for every $p\in M$ there exists a multiplier $\lambda \in \mathcal{S}$ 
with the property that near $p$, the zero set of $\lambda$ is a finite intersection of
real hypersurfaces in $M$, and such that $\lambda$ does not 
vanish to infinite order at $p$.
Thence we can apply Proposition 
\ref{DivProp} or Corollary \ref{DivCor}, respectively.

\begin{Thm}
	\label{thm:main} Let $(M,\crb)$ be an abstract CR structure, $p\in M$, 
	and assume that $M$ is CR-regular near $p$. 
	Then any locally integrable infinitesimal CR diffeomorphism $X$ of $M$ which 
	extends microlocally to a wedge with edge $M$ is of class $\{\M\}$ near $p$. 
\end{Thm}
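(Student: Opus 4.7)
The plan is to combine the multiplier theorem (Theorem \ref{thm:multipliers}) with the division results of Proposition \ref{DivProp} and Corollary \ref{DivCor}. Working locally near $p$, I fix a chart $U\subseteq M$ and a local frame of $(T^\prime M)^\ast$, so that the infinitesimal CR diffeomorphism $X$ is represented by locally integrable components $X_1,\dotsc,X_N\in L^1_{\mathrm{loc}}(U)$. Using the CR-regularity hypothesis I pick a multiplier $\lambda\in\mathcal{S}(U)$ whose zero set near $p$ is a finite union of $\{\M\}$-hypersurfaces $H_1,\dotsc,H_r$ and which is non-flat at $p$. Applying Theorem \ref{thm:multipliers} then yields $\lambda X_j\in\E_\M(U)$ for every $j$; the entire remaining task is to divide out $\lambda$ within $\E_\M$.

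If $\lambda(p)\neq 0$, inverse closedness (Theorem \ref{CMStability}(1)) gives the conclusion at once. If $p$ lies on exactly one of the hypersurfaces, Proposition \ref{DivProp} applies directly: the non-flatness of $\lambda$ at $p$ guarantees the existence of a transverse direction $v$ along which only finitely many derivatives of $\lambda$ vanish, which is the precise hypothesis needed, and one concludes that each $X_j\in\E_\M$ near $p$.

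In the general case that $p$ lies on several of the $H_j$, the plan is to shrink $U$ and choose $\{\M\}$-coordinates $(x_1,\dotsc,x_n)$ centered at $p$ so that each hypersurface through $p$ becomes a coordinate hyperplane $\{x_j=0\}$. Iterated application of the Hadamard Lemma \ref{HadamardLemma} then produces a factorization
\begin{equation*}
\lambda(x)=x_1^{\alpha_1}\cdots x_r^{\alpha_r}\,\tilde\lambda(x),\qquad \tilde\lambda\in\E_\M(U),
\end{equation*}
where the multi-index $\alpha$ records the orders of vanishing of $\lambda$ along each hypersurface; the non-flatness of $\lambda$ at $p$ forces the $\alpha_j$ to be finite and $\tilde\lambda(p)\neq 0$. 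This puts $\lambda$ into exactly the form required by Corollary \ref{DivCor}, which, applied componentwise to $\lambda X_j$, yields $X_j\in\E_\M$ in a neighbourhood of $p$.

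The main obstacle I anticipate is the factorization step. To straighten all the hypersurfaces simultaneously one needs their transversality, together with control on the orders of vanishing so that the residual factor $\tilde\lambda$ is genuinely ultradifferentiable and non-vanishing. Both features are implicit in the CR-regularity assumption, but care is needed to extract them cleanly from the hypothesis that $\lambda$ does not vanish to infinite order at $p$; in the quasianalytic setting Theorem \ref{FormalDiv} can be invoked to upgrade a formal factorization of the Taylor series of $\lambda$ to an actual identity in $\E_\M$, bridging any remaining gap between the formal decomposition and the one needed to apply Corollary \ref{DivCor}.
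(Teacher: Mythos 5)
Your proposal follows exactly the route the paper intends: Theorem \ref{thm:multipliers} gives $\lambda X_j\in\E_\M$ for the multiplier $\lambda$ furnished by CR-regularity, and the division results (Proposition \ref{DivProp}, resp.\ Corollary \ref{DivCor}) then recover $X_j\in\E_\M$ from local integrability. The paper in fact gives no separate proof at all --- it states precisely this reduction in the sentence following the definition of CR-regularity --- so your write-up, including the honest caveat about straightening the hypersurfaces and normalizing $\lambda$ into the form $x^\alpha\tilde{\lambda}$ with $\tilde{\lambda}(p)\neq 0$, is if anything more explicit than the original.
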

%Without boundedness conditions on $X$ this theorem is actually 
%in some sense optimal 
%as we are going to see later on. %(see \autoref{sec:example}).

In general it might be difficult to determine if a certain CR manifold is CR-regular.
In the forthcoming we want to present some instances of CR-regular manifolds.
But first we take a closer look at the Lie derivatives of characteristic forms.

Suppose that $M$ is a CR manifold and near a point $p_0\in M$ there are local coordinates $(x,y,s)$ of $M$ such 
that the vector fields
\begin{equation}
L_j=\frac{\partial}{\partial\bar{z}_j}-\sum_{\tau =1}^d b^j_\tau\frac{\partial}{\partial s_\tau},\quad j=1,\dotsc, n,\;z_j=x_j+y_j,
\end{equation}
where $b^j_\tau\in\E_\M$, are a local basis of CR vector fields near $p_0$.  
In this setting (c.f.\ Remark \ref{BasisCRVF}) the characteristic bundle is spanned by the forms
\begin{equation*}
\theta^\tau= ds_\tau +\sum_{j=1}^n b^j_\tau\,d\bar{z}_j+\sum_{j=1}^n\bar{b}^j_\tau\,dz_j,\quad \tau=1,\dotsc, d.
\end{equation*}
Furthermore, the forms $\theta^\tau$, $\tau =1,\dotsc, d$, and $\omega^j=dz_j$, $j=1,\dotsc, n$, constitute
a local basis of holomorphic forms on $M$ near $p_0$.
We also define the functions
\begin{equation*}
\lambda^{j,k}_\mu:= L_k \bar{b}^j_\mu-\bar{L}_j b^k_\mu
\end{equation*}
for $j,k=1,\dotsc, n$ and $\mu=1,\dotsc,d$.

Consider a general holomorphic form
\begin{equation*}
\eta =\sum_{\mu=1}^{d}\sigma_\mu\theta^\mu
+\sum_{j=1}^{n}\rho_j\omega^j.
\end{equation*}
The Lie derivative of $\eta$ with respect to the CR vector field $L_k$ is
\begin{equation}\label{genLieDeriv}
\mathcal{L}_k\eta=d\eta(L_k,\,.\,)=
\sum_{\mu=1}^d\Biggl(L_k\sigma_\mu
-\sum_{\nu=1}^d\sigma_\nu\bigl(b^k_\nu\bigr)_{s_\mu}\Biggr)\theta^\mu
+\sum_{j=1}^n\Biggl(L_k\rho_j 
+\sum_{\mu=1}^d \sigma_\mu\lambda^{j,k}_\mu\Biggr)\omega^j.
\end{equation}

Let $\alpha\in\N_0^n$ a multi-index of length $\lvert\alpha\rvert=m$.
We introduce the finite sequence $m_j:=\sum_{\ell\leq j}\alpha_{\ell}$, $j=1,\dotsc,n$, and set $m_0:=0$
and associate to $\alpha$ the function 
$p_\alpha:\,\{0,1,\dotsc,m\}\rightarrow \{0,1,\dotsc,n\}$
which is defined by
\begin{equation*}
p_\alpha(\ell)=j \qquad \text{if }\; \ell\in (m_{j-1},m_j]
\end{equation*}
for $\ell=1,\dotsc,m$ and $p_\alpha(0)=0$.
We also associate the following sequences of multi-indices to $\alpha$
\begin{align*}
\alpha (\ell)&:=\sum_{q\leq\ell} e_{p_\alpha(q)} & \ell&=0,1,\dotsc,m,\\
\hat{\alpha}(\ell)&:=\sum_{q>\ell} e_{p(q)},
\end{align*}
where $e_j$ is the $j$-th standard unit vector in $\R^n$.

With this notation and \eqref{genLieDeriv} we can now state what the
Lie derivative of the characteristic form $\theta^\mu$ ($\mu=1,\dotsc,d$) is:
\begin{equation}\label{LieDerivChar}
\mathcal{L}^\alpha\theta^\mu=\sum_{\tau=1}^{d}T^{\alpha,\mu}_{\tau}\theta^\tau
+\sum_{j=1}^n A^{\alpha,\mu}_j\omega^j 
\end{equation}
The functions $T^{\alpha,\mu}_\tau$ and $A^{\alpha,\mu}_j$ are defined iteratively by
\begin{subequations}\label{gencoeff-formula}
	\begin{align}
	T^{0,\mu}_\tau&=\delta_{\mu\tau},\notag\\
	T^{\alpha,\mu}_{\tau} &=L_{p_\alpha(1)}T^{\hat{\alpha}(1),\mu}_\tau
	-\sum_{\nu=1}^{d}\bigl(b^{p(1)}_\nu\bigr)_{s_\tau}T^{\hat{\alpha}(1),\mu}_\nu\\
	\intertext{and}
	A^{\alpha,\mu}_{j} &=\sum_{k=1}^{m}\sum_{\nu=1}^{d}L^{\alpha(k-1)}\Bigl(T^{\alpha -\alpha(k),\mu}_\nu\lambda^{j,p_\alpha(k)}_\nu\Bigr).
	\end{align}
\end{subequations}

We are now able to give the first example of a CR regular submanifold of $\C^N$.
\begin{Def}
	We say that a real hypersurface $M\subseteq\C^N$ is weakly nondegenerate at $p_0$ iff there 
	exist coordinates $(z,w)\in\C^n\times\C$ near $p_0$ and  numbers $k,m\in\N $ 
	such that $p_0=0$ in these coordinates and near $p_0$
	$M$ is given by an equation of the form
	\begin{align*}
	\imag w &= (\real w)^{m} \varphi (z,\bar z, \real w),  \\
	\intertext{where} 
	\frac{\partial^{|\alpha| }\varphi }{\partial z^\alpha  } (0,0,0) &= \frac{\partial^{|\alpha|}\varphi }{\partial \bar z^\alpha} (0,0,0) = 0, \quad |\alpha| \leq k, 
	\end{align*}
	and 
	\[ \spanc_\C \{ \varphi_{z{\bar z}^\alpha}  (0,0,0) \colon |\alpha|\leq k \} = \C^n.\]
	If $k_0$ is the smallest $k$ for which the preceding condition holds, 
	we say that $M$ is weakly $k_0$-nondegenerate at $p_0$. 
\end{Def}

\begin{Prop}
	\label{cor:main2a}  Let $M\subseteq \C^N$ be an %generic
	ultradifferentiable real hypersurface, $p_0\in M$, and 
	assume that  $M$ is weakly $k_0$-nondegenerate  at $p_0$.
	Then $M$ is CR regular near $p_0$.
	In particular, any locally integrable infinitesimal CR diffeomorphism of $M$ 
	which extends microlocally to a wedge with 
	edge $M$ near $p_0$ is ultradifferentiable near $p_0$. 
\end{Prop}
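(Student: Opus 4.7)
The plan is to exhibit, in a neighborhood of $p_0$, an explicit multiplier $\lambda\in\mathcal{S}^{k_0}$ of the form $\lambda=s^{nm}g$ with $g(p_0)\neq 0$, where $s=\real w$ is a local ultradifferentiable coordinate on $M$. Such a $\lambda$ verifies both CR-regularity conditions at $p_0$: its zero set near $p_0$ is the single real hypersurface $\{s=0\}\subset M$, and $\partial_s^{nm}\lambda(p_0)=(nm)!\,g(p_0)\neq 0$, so $\lambda$ does not vanish to infinite order at $p_0$. The second conclusion of the proposition then follows at once from Theorem~\ref{thm:main}.

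First, by Proposition~\ref{genericcoordinates} we may work in ultradifferentiable coordinates in which $p_0=0$ and $M$ is given by $\imag w=s^{m}\varphi(z,\bar z,s)$. From Remark~\ref{BasisCRVF} a local basis of CR vector fields is $L_j=\partial_{\bar z_j}-b^{j}\partial_{s}$ with $b^{j}=i s^{m}\varphi_{\bar z_j}/\mu=s^{m}q^{j}$, where $\mu:=1+i\partial_{s}(s^{m}\varphi)$ satisfies $\mu(0)=1$. The characteristic bundle is generated by $\theta=ds+\sum_j(b^{j}\,d\bar z_j+\bar b^{j}\,dz_j)$, and I take the $T^\prime M$-basis $\omega^{1}=\theta$, $\omega^{j+1}=dz_j$, $j=1,\dots,n$.

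Second, I expand $\mathcal{L}^{\alpha}\theta=T^{\alpha}\theta+\sum_{j=1}^{n}A^{\alpha}_{j}\,dz_{j}$ via the recursion \eqref{gencoeff-formula}. Inductively on $|\alpha|$, since each $b^{k}$, $\bar b^{k}$ carries an $s^{m}$-factor and $L_{k}(s^{m}f)=s^{m}\bigl(\partial_{\bar z_k}f-ms^{m-1}q^{k}f-s^{m}q^{k}\partial_{s}f\bigr)$, no summand in $A^{\alpha}_{j}$ breaks the $s^{m}$-divisibility. Iterated application of Lemma~\ref{HadamardLemma} yields ultradifferentiable $C^{\alpha}_{j}$ with $A^{\alpha}_{j}=s^{m}C^{\alpha}_{j}$. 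The crucial claim is that for every $|\alpha|\geq 1$,
\[
C^{\alpha}_{j}(0,0,0)=-2i\,\varphi_{z_j\bar z^{\alpha}}(0).
\]
The base case $|\alpha|=1$ is the direct calculation $\lambda^{j,k}=L_{k}\bar b^{j}-\bar L_{j}b^{k}=s^{m}\bigl(\partial_{\bar z_k}\bar q^{j}-\partial_{z_j}q^{k}\bigr)+O(s^{2m-1})$, whose leading coefficient at the origin equals $-2i\,\varphi_{z_j\bar z_k}(0)$ (the cross terms $b^{k}\partial_{s}\bar b^{j}$ and $\bar b^{j}\partial_{s}b^{k}$ drop out thanks to $\varphi_{z_j}(0)=\varphi_{\bar z_k}(0)=0$). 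For the inductive step $\alpha\rightsquigarrow\alpha+e_{k}$, one uses $A^{\alpha+e_k}_{j}=L_{k}A^{\alpha}_{j}+T^{\alpha}\lambda^{j,k}$. The first summand gives $s^{m}\partial_{\bar z_k}C^{\alpha}_{j}+O(s^{2m-1})$, and evaluating at $p_0$ yields $\partial_{\bar z_k}C^{\alpha}_{j}(0)=-2i\,\varphi_{z_j\bar z^{\alpha+e_k}}(0)$; all additional contributions produced by the chain rule contain a factor $\partial_{z}^{\beta}\varphi(0)$ or $\partial_{\bar z}^{\gamma}\varphi(0)$ of order $\leq k_0$ and hence vanish by the weak nondegeneracy hypothesis. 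The second summand $T^{\alpha}\lambda^{j,k}=s^{m}T^{\alpha}C^{j,k}$ contributes zero at the origin because of the parallel induction $T^{\alpha+e_k}=L_{k}T^{\alpha}-T^{\alpha}(b^{k})_{s}$, which combined with $(b^{k})_{s}(0)=0$ gives $T^{\alpha}(0,0,0)=0$ for every $|\alpha|\geq 1$.

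Finally, by weak $k_{0}$-nondegeneracy one chooses $\alpha^{1},\dots,\alpha^{n}\in\N_{0}^{n}$ with $|\alpha^{\ell}|\leq k_{0}$ such that the vectors $(\varphi_{z_j\bar z^{\alpha^{\ell}}}(0))_{j=1}^{n}\in\C^{n}$ form a basis. Setting $\underline\alpha=(0,\alpha^{1},\dots,\alpha^{n})$ and $r=(1,\dots,1)$ (as $d=1$), the multiplier from \eqref{equ:basisfunctions} is the determinant of an $(n+1)\times(n+1)$ matrix whose first row is $(T^{0},A^{0}_{1},\dots,A^{0}_{n})=(1,0,\dots,0)$. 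Expanding along that row gives
\[
D(\underline\alpha,r)=\det\bigl(A^{\alpha^{\ell}}_{j}\bigr)_{\ell,j}=s^{nm}\det\bigl(C^{\alpha^{\ell}}_{j}\bigr)_{\ell,j},
\]
which at the origin equals $s^{nm}\cdot(-2i)^{n}\det\bigl(\varphi_{z_j\bar z^{\alpha^{\ell}}}(0)\bigr)_{\ell,j}\neq 0$. Hence $\lambda:=D(\underline\alpha,r)\in\mathcal{S}^{k_0}$ is the sought multiplier, exhibiting $M$ as CR-regular near $p_0$; Theorem~\ref{thm:main} then delivers the regularity of infinitesimal CR diffeomorphisms. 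The hardest part is the third step: keeping track of which of the numerous correction terms produced by the iterated Lie-derivative expansion actually contribute nontrivially at the origin. The weak nondegeneracy condition is deployed precisely to annihilate all such corrections, leaving only the clean leading term $-2i\,\varphi_{z_j\bar z^{\alpha}}(0)$ to drive the nondegeneracy of the final determinant.
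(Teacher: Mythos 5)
Your proposal follows essentially the same route as the paper's proof: you construct the same multiplier $D(\underline\alpha)=s^{nm}\cdot(\text{unit at }0)$ from the Lie-derivative recursion for $\mathcal{L}^\alpha\theta$, extract the factor $s^m$ from each $A^\alpha_j$, identify the leading coefficient at the origin with $\varphi_{z_j\bar z^\alpha}(0)$ up to a nonzero constant, and finish with the same determinant; the conclusion then follows from Theorem \ref{thm:main} exactly as in the paper. The argument is correct, up to one imprecise justification: the claim that $T^{\alpha+e_k}=L_kT^\alpha-T^\alpha(b^k)_s$ ``combined with $(b^k)_s(0)=0$ gives $T^\alpha(0)=0$ for every $\lvert\alpha\rvert\geq1$'' does not close as an induction, because $L_kT^\alpha(0)=\partial_{\bar z_k}T^\alpha(0)$ is not controlled by the vanishing of $T^\alpha$ at the single point $0$. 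For $m\geq2$ the paper sidesteps this by noting $T^\beta=s^{m-1}G^\beta$; for $m=1$ one must argue, exactly as you do for the corrections to the $A$-recursion, that every term of $T^\beta(0)$ carries a pure derivative $\varphi_{\bar z^\gamma}(0)$ or $\varphi_{z^\gamma}(0)$ of order $\leq\lvert\beta\rvert\leq k_0$, which vanishes by weak nondegeneracy; this yields $T^\beta(0)=0$ in the range $1\leq\lvert\beta\rvert\leq k_0-1$ actually needed (not for all $\lvert\beta\rvert\geq1$, where the claim can fail). With that patch, which uses only the mechanism you already invoke elsewhere, your proof coincides with the paper's.
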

\begin{proof}
	In order to show that $M$ is CR regular
	we are going to construct a multiplier $\lambda\in\mathcal{S}$ of the form
	\begin{equation*}
	\lambda(z,\bar{z},s)=s^\ell\psi(z,\bar{z},s)
	\end{equation*}
	in suitable local coordinates and with $\psi\in\E_\M$ not vanishing at $s=0$ and $\ell\in\N$.
	
	Recall that by assumption there are coordinates $(z,w)\in\C^n\times\C$
	such that $p_0=0$ and $M$ is given locally by
	\begin{equation*}
	\imag w=(\real w)^m\varphi (z,\bar{z},\real w)
	\end{equation*}
	where $m\!\in\!\N$ and $\varphi$ is an ultradifferentiable real-valued function defined near $0$
	with the property that $\varphi_{z^\alpha}(0)\!=\!\varphi_{\bar{z}^\alpha}(0)\!=\!0$ 
	for $\lvert\alpha\rvert\leq k_0$ and
	\begin{equation*}
	\spanc_\C \{ \varphi_{z{\bar z}^\alpha}  (0,0,0) \colon 0<\lvert\alpha\rvert\leq k_0\}%\in\N^n\} 
	= \C^n.
	\end{equation*}
	
	In these coordinates a local basis of the CR vector fields on $M$ is given by
	\begin{align*}
	L_j&=\frac{\partial}{\partial \bar{z}_j}-b^j\frac{\partial}{\partial s},\qquad 1\leq j\leq n,\\
	\intertext{with}
	b^j&=i\frac{s^m\varphi_{\bar{z}_j}}{1+i(s^m\varphi)_s},
	\end{align*}
	whereas the characteristic bundle is spanned near the origin by 
	\begin{equation*}
	\theta =ds+\sum_{j=1}^n b^j\,d\bar{z}_j
	+\sum_{j=1}^n b^j\,dz_j
	\end{equation*}
	and $\theta$ together with the forms $\omega^j=dz_j$ constitute a local basis of $T^\prime M$ near the origin.
	%We claim that $A^{\alpha}_\ell=s^m B^{\alpha}_\ell$ 
	%where the $B^{\alpha}_\ell$ are some ultradifferentiable functions  and 
	%$B^{\alpha}_\nu(0)=2i\varphi_{\bar{z}^{\alpha}z_\nu}(0)$ for 
	%$\lvert\alpha\rvert\leq k$.
	
	We observe that for $1\leq j,\ell\leq n$  
	\begin{align*}
	\lambda^{j}_\ell&:=L_j\bar{b}^\ell-\bar{L}_\ell b^j\\
	&=s^m\Biggl(\frac{i\varphi_{\bar{z}_j z_\ell}
		(1+i(s^m\varphi)_s)+ \varphi_{z_\ell}(s^m\varphi_{\bar{z}_j})_s}{(1+i(s^m\varphi)_s)^2}\\
	&\qquad+\frac{\varphi_{\bar{z}_j}\bigl((s^m\varphi_{z_\ell})_s(1+i(s^m\varphi)_s)
		-is^m\varphi_{z_\ell}(s^m\varphi)_{ss}\bigr)}{(1+i(s^m\varphi)_s)^3}\\
	&\qquad+\frac{i\varphi_{\bar{z}_j z_\ell}
		(1+i(s^m\varphi)_s) + \varphi_{\bar{z}_j}(s^m\varphi_{z_\ell})_s}{(1+i(s^m\varphi)_s)^2}\\
	&\qquad-\frac{\varphi_{z_\ell}\bigl((s^m\varphi_{\bar{z}_j})_s(1+i(s^m\varphi)_s)
		-s^m\varphi_{\bar{z}_j}(s^m\varphi)_{ss}\bigr)}{(1+i(s^m\varphi)_s)^3}\Biggr)\\
	&=s^m\chi^j_\ell
	\end{align*}
	and 
	$\chi^j_\ell(0)=2i\varphi_{\bar{z}_jz_\ell}(0)$
	by the assumptions on $\varphi$. 
	
	In this setting \eqref{LieDerivChar} takes the form
	\begin{equation*}
	\mathcal{L}^\alpha\theta =T^\alpha\theta +\sum_{j=1}^n A^\alpha_j\omega^j
	\end{equation*}
	and \eqref{gencoeff-formula} implies that
	\begin{align*}
	T^\alpha &=L_{p(1)}T^{\hat{\alpha}(1)}-\bigl(b^{p(1)}\bigr)_s T^{\hat{\alpha}(1)}, \quad T^0=1,\\
	A^\alpha_j &=\sum_{k=1}^{\lvert\alpha\rvert}=L^{\alpha(k-1)}\Bigl(T^{\hat{\alpha}(k)}\lambda^j_{p(k)}\Bigr).
	\end{align*}
	
	If we use  the two simple facts for smooth functions $f,g$, namely $(s^qf)_s=s^{q-1}f+s^qf_s$ for $q\in\N$
	we see that $T^\beta=s^{m-1}G^\beta$ for $\lvert\beta\rvert\geq 1$.
	Hence, if $m\geq 2$ we have
	\begin{equation*}
	A^{\alpha}_\ell (z,\bar{z},s)=s^m 
	\frac{2i\varphi_{\bar{z}^\alpha z_\ell}(z,\bar{z},s)}{1+(s^m\varphi(z,\bar{z},s))_s^2}
	+s^{2m-1}R^\alpha_\ell(z,\bar{z},s)=s^m B^{\alpha}_\ell(z,\bar{z},s).
	\end{equation*}
	On the other hand we obtain for $m=1$ the following representation
	\begin{equation*}
	A^{\alpha}_\ell(z,\bar{z},s)
	=s\frac{2i\varphi_{\bar{z}^\alpha z_\ell}(z,\bar{z},s)}{1+(\varphi(z,\bar{z},s)+s\varphi_s(z,\bar{z},s))^2}
	+sS^{\alpha}_\ell(z,\bar{z},s)+s^2R^{\alpha}_\ell(z,\bar{z},s)=sB^{\alpha}_\ell(z,\bar{z},s),
	\end{equation*}
	where $S^{\alpha}_\ell$ is a sum of products of rational functions 
	with respect to $\varphi$ and its derivatives.
	Each of these summands contains at least one factor of the form 
	$\varphi_{\bar{z}^\beta}$ or $\varphi_{z^\beta}$
	with $\lvert\beta\rvert\leq\lvert\alpha\rvert\leq k_0$ 
	and therefore $S^{\alpha}_\ell(0)=0$. 
	%Hence $B^{\alpha}_\nu(0)=2i\varphi_{z\bar{z}^\alpha}(0)$.
	
	By assumption there have to be multi-indices $\alpha^1,\dots,\alpha^n\neq 0$ of length shorter than 
	$k_0$ such that 
	\begin{equation*}
	\{\varphi_{z\bar{z}^{\alpha^1}}(0),\dots ,\varphi_{z\bar{z}^{\alpha^n}}(0)\}
	\end{equation*}
	is a basis for $\C^n$.
	Now we choose $\underline{\alpha}=(0,\alpha^1,\dots,\alpha^n)$ and calculate according to 
	\eqref{equ:basisfunctions} the multiplier $D(\underline{\alpha})=D(\underline{\alpha}, 1)$
	(note that $d=1$):
	\begin{align*}
	D(\underline{\alpha})&=\det \begin{pmatrix}
	1 & 0 & \dots &0\\
	A^{\alpha^1}_\theta & A^{\alpha^1}_1 & \dots & A^{\alpha^1}_n\\
	\vdots & \vdots & \ddots & \vdots\\
	A^{\alpha^n}_\theta & A^{\alpha^n}_1 &\dots & A^{\alpha^n}_n
	\end{pmatrix}\\
	&=s^{n\cdot m}\det\begin{pmatrix}
	1 & 0 & \dots &0\\
	A^{\alpha^1}_\theta & B^{\alpha^1}_1 & \dots & B^{\alpha^1}_n\\
	\vdots & \vdots & \ddots & \vdots\\
	A^{\alpha^n}_\theta & B^{\alpha^n}_1 &\dots & B^{\alpha^n}_n
	\end{pmatrix}\\
	&=s^{n\cdot m}Q(\underline{\alpha})\\
	\intertext{where}
	Q(\underline{\alpha})&=\det\begin{pmatrix}
	1 & 0 & \dots &0\\
	A^{\alpha^1}_\theta & B^{\alpha^1}_1 & \dots & B^{\alpha^1}_n\\
	\vdots & \vdots & \ddots & \vdots\\
	A^{\alpha^n}_\theta & B^{\alpha^n}_1 &\dots & B^{\alpha^n}_n 
	\end{pmatrix}
	=\det\begin{pmatrix}
	B^{\alpha^1}_1 & \dots & B^{\alpha^1}_n\\
	\vdots & \ddots & \vdots\\
	B^{\alpha^n}_1 &\dots & B^{\alpha^n}_n
	\end{pmatrix},\\
	\intertext{hence}
	Q(\underline{\alpha})(0)&=(2i)^n\det \begin{pmatrix}
	\varphi_{z\bar{z}^{\alpha^1}}(0)\\
	\vdots\\
	\varphi_{z\bar{z}^{\alpha^n}}(0)
	\end{pmatrix}
	\neq 0.
	\end{align*}
	We conclude that $M$ is CR-regular.
\end{proof}
Obviously, a similar approach as in the hypersurface case above can be used to find manifolds of higher codimension that are CR-regular. 
%However for the sake of simplicity we restrict ourselves to the case of codimension $2$.
\begin{Def}\label{higherCodimDef}
	We say that a CR manifold $M\subseteq\C^{N}$ of codimension $d$ is  weakly
	nondegenerate at $p_0\in M$ (in the first codimension)
	iff there are local coordinates $(z,w)\in\C^{n+d}$ near $p_0$
	such that $M$ is given by the equations
	\begin{equation*}
	\imag w_\mu = (\real w)^{\gamma^\mu} \varphi_\mu (z,\bar z, \real w),\quad \mu=1,\dotsc,d,
	\end{equation*}
	with $\gamma^1<\gamma^\nu$, $\nu=2,\dotsc,d$, and $\lvert\gamma^1\rvert\geq 2$.
	Furthermore the function $\varphi_1$ satisfies for some $k$
	\begin{equation*}
	\spanc_\C\bigl\{\big(\varphi_1\big)_{z\bar{z}^\alpha}(0,0,0):\;\lvert\alpha\rvert\leq k\bigr\}=\C^n.
	\end{equation*}
	If $k_0$ is the smallest integer $k$ for which the above condition holds, we say that $M$ is weakly $k_0$-nondegenerate at $p_0$.
\end{Def}
\begin{Prop}\label{higherCodimRes}
	Let $M\subseteq \C^{N}$ be a generic ultradifferentiable
	CR submanifold of codimension $d$, $p_0\in M$, and 
	assume that  $M$ is weakly nondegenerate  at $p_0$.
	Then any locally integrable infinitesimal CR diffeomorphism of $M$ which extends microlocally to a wedge with 
	edge $M$ near $p_0$ is ultradifferentiable 
	near $p_0$. 
\end{Prop}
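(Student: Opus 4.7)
The plan is to mimic the strategy of Proposition \ref{cor:main2a}: construct an explicit multiplier $\lambda\in\mathcal{S}^{k_0}$ near $p_0$ that factors as $\lambda=s^\beta\tilde\lambda$ with $\beta\in\N_0^d$ and $\tilde\lambda(0)\neq 0$, and then invoke Theorem \ref{thm:multipliers} together with Corollary \ref{DivCor}. By Theorem \ref{thm:multipliers}, each component of $\lambda\mathfrak{Y}$ is ultradifferentiable of class $\{\M\}$; Corollary \ref{DivCor} then promotes this to regularity of $\mathfrak{Y}$ itself.

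First I would work in the coordinates of Definition \ref{higherCodimDef} with $(z,w)=(z,s+it)\in\C^n\times\C^d$, set $\Psi_\mu=s^{\gamma^\mu}\varphi_\mu$, and use Remark \ref{BasisCRVF} to write explicit formulas for $L_j=\partial_{\bar z_j}-\sum_\mu b^j_\mu\partial_{s_\mu}$, for the characteristic forms $\theta^\mu$ and for $\omega^j=dz_j$, so that $\{\theta^1,\dots,\theta^d,\omega^1,\dots,\omega^n\}$ is a local basis of $T^\prime M$ near $0$. The hypotheses $\lvert\gamma^1\rvert\geq 2$ and $\gamma^1<\gamma^\nu$ for $\nu\neq 1$ force $\nabla\Psi(0)=0$, and moreover $\Psi_\mu$ vanishes at the origin to strictly higher order than $\Psi_1$ whenever $\mu\neq 1$. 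A computation exactly parallel to the hypersurface case in Proposition \ref{cor:main2a} then gives the factorization
\begin{equation*}
\lambda^{j,k}_\mu=L_k\bar b^j_\mu-\bar L_j b^k_\mu=s^{\gamma^\mu}\chi^{j,k}_\mu,
\end{equation*}
with $\chi^{j,k}_1(0)$ equal to a fixed nonzero multiple of $(\varphi_1)_{z_j\bar z_k}(0)$.

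Next, using the recursion \eqref{gencoeff-formula} and induction on $\lvert\alpha\rvert$, I would show the clean factorization
\begin{equation*}
A^{\alpha,1}_j=s^{\gamma^1}B^{\alpha,1}_j,\qquad B^{\alpha,1}_j(0)=c\,(\varphi_1)_{z_j\bar z^\alpha}(0),
\end{equation*}
with a universal nonzero constant $c$ independent of $\alpha$. The point is that any summand of \eqref{gencoeff-formula} involving a factor $\lambda^{j,p_\alpha(k)}_\nu$ with $\nu\neq 1$ carries a prefactor $s^{\gamma^\nu}$, and the strict hierarchy $\gamma^1<\gamma^\nu$ means such terms vanish at $0$ after dividing by $s^{\gamma^1}$; simultaneously any $L$-derivative acting on a $b^k_\mu$ only raises the power of $s$. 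With this factorization in hand, pick multi-indices $\alpha^1,\dots,\alpha^n$ with $\lvert\alpha^i\rvert\leq k_0$ such that $\{(\varphi_1)_{z\bar z^{\alpha^i}}(0)\}_{i=1}^n$ is a basis of $\C^n$, which exists by the weak nondegeneracy hypothesis, and set $\underline\alpha=(0,\dots,0,\alpha^1,\dots,\alpha^n)$ and $r=(1,2,\dots,d,1,1,\dots,1)\in\{1,\dots,d\}^N$. Since $\mathcal{L}^0\theta^\tau=\theta^\tau$, the upper-left $d\times d$ block of the matrix in \eqref{equ:basisfunctions} is the identity and the upper-right $d\times n$ block vanishes, hence
\begin{equation*}
D(\underline\alpha,r)=\det\bigl(A^{\alpha^i,1}_j\bigr)_{i,j=1}^{n}=s^{n\gamma^1}\det\bigl(B^{\alpha^i,1}_j\bigr)_{i,j=1}^{n},
\end{equation*}
and the latter determinant evaluates at $0$ to $c^n\det\bigl((\varphi_1)_{z_j\bar z^{\alpha^i}}(0)\bigr)\neq 0$. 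Thus $\lambda:=D(\underline\alpha,r)\in\mathcal{S}^{k_0}$ has the required form $s^{n\gamma^1}\tilde\lambda$ with $\tilde\lambda(0)\neq 0$, and Corollary \ref{DivCor} finishes the proof.

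The main obstacle will be establishing the clean factorization $A^{\alpha,1}_j=s^{\gamma^1}B^{\alpha,1}_j$ together with the precise value of $B^{\alpha,1}_j(0)$, since the recursion \eqref{gencoeff-formula} mixes all $d$ characteristic slots and all $n$ CR directions, and one must carefully track how the multi-index powers $s^{\gamma^\mu}$ combine under iterated Lie derivatives. The strict hierarchy $\gamma^1<\gamma^\nu$ is the structural ingredient that lets us discard every contribution coming from codimensions $\mu\neq 1$ and reduce the bookkeeping to exactly the hypersurface situation in the first slot.
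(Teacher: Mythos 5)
Your overall strategy coincides with the paper's: the same coordinates, the same basis of CR vector fields from Remark \ref{BasisCRVF}, the same choice $\underline\alpha=(0,\dots,0,\alpha^1,\dots,\alpha^n)$, $r=(1,\dots,d,1,\dots,1)$, the same block-triangular reduction of $D(\underline\alpha,r)$ to $\det\bigl(A^{\alpha^i,1}_j\bigr)$, and the same conclusion via Theorem \ref{thm:multipliers} and Corollary \ref{DivCor}. The computation of the values at the origin and the nonvanishing of the reduced determinant also match the paper.

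There is, however, one intermediate claim that is false as stated and that you lean on: the factorization $\lambda^{j,k}_\mu=s^{\gamma^\mu}\chi^{j,k}_\mu$, equivalently your assertion that every summand containing $\lambda^{j,p_\alpha(k)}_\nu$ carries the prefactor $s^{\gamma^\nu}$. By Cramer's rule, $b^j_\mu$ is (up to the unit $\det(\Id_d+i\Phi)^{-1}$) the sum $\sum_\rho s^{\gamma^\rho}(\varphi_\rho)_{\bar z_j}\operatorname{cof}_{\rho\mu}(\Id_d+i\Phi)$; only the \emph{common} power $s^{\gamma^1}$ can be extracted from all $\mu$ simultaneously. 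For instance with $d=2$, $\gamma^1=(0,2)$, $\gamma^2=(1,2)$, the $\rho=1$ contribution to $\det B^j_2$ is $-i\,s^{(0,2)}(\varphi_1)_{\bar z_j}\,(s_1s_2^2\varphi_2)_{s_1}=-i\,s_2^4(\varphi_1)_{\bar z_j}\bigl(\varphi_2+s_1(\varphi_2)_{s_1}\bigr)$, which carries no factor of $s_1$ and hence is not divisible by $s^{\gamma^2}=s_1s_2^2$; so $b^j_2$, and with it $\lambda^{j,k}_2$, need not be divisible by $s^{\gamma^2}$. What \emph{is} true, and what the paper's proof establishes (equations \eqref{CoeffForm1}--\eqref{CoeffForm2}), is the weaker statement $b^j_\mu=is^{\gamma^1}\det(\Id_d+i\Phi)^{-1}\det C^j_\mu$ with $\det C^j_\mu\big\vert_{s=0}=0$ for $\mu\geq 2$ (the first and $\mu$-th columns of $C^j_\mu\big\vert_{s=0}$ are proportional), whence $\lambda^{j,k}_\nu=2is^{\gamma^1}R^{j,k}_\nu$ with $R^{j,k}_\nu\big\vert_{s=0}=0$ for $\nu\geq 2$ and $R^{j,k}_1\big\vert_{s=0}=(\varphi_1)_{\bar z_k z_j}\big\vert_{s=0}$. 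This weaker fact, combined with $T^{\alpha,\mu}_\tau\big\vert_{s=0}=0$ for $\alpha\neq 0$, is exactly enough to discard the $\nu\neq 1$ contributions after dividing by $s^{\gamma^1}$ and to obtain $A^{\alpha,1}_j=s^{\gamma^1}B^{\alpha,1}_j$ with $B^{\alpha,1}_j(0)=2i(\varphi_1)_{\bar z^\alpha z_j}(0)$, so your endgame goes through unchanged once you replace the overstated factorization by this one. A similar caveat applies to the remark that $L$-derivatives ``only raise the power of $s$'': the $\partial_{s_\mu}$ components lower it, and what you actually need is that divisibility by $s^{\gamma^1}$ is preserved, which holds because $b^j_\mu=O(s^{\gamma^1})$ with $\lvert\gamma^1\rvert\geq 2$.
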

\begin{proof}
	Similar to before we have to construct a multiplier $\lambda\in\mathcal{S}$ of the form 
	$\lambda(z,\bar{z},s)=s^\beta\psi(z,\bar{z},s)$ where $\psi\in\E_\M$ and $\psi(0)\neq 0$.
	By assumption there are coordinates $(z,w)\in\C^{n+d}$ near $p_0=0$ such that $M$ is given by
	\begin{equation*}
	\imag w_\mu = (\real w)^{\gamma^\mu} \varphi_\mu (z,\bar z, \real w),\quad \mu=1,\dotsc,d.
	\end{equation*}
	In particular note that $\alpha^1\leq\alpha^\mu$ for $\mu=2,\dotsc,d$.
	
	We deduce from Remark \ref{BasisCRVF} that the vector fields
	\begin{equation*}
	L_j=\frac{\partial}{\partial \bar{z}_j}-\sum_{\mu=1}^d b^j_\mu\frac{\partial}{\partial s_\mu}
	\end{equation*}
	are a local basis of the CR vector fields near the origin. 
	The coefficients $b^j_\mu$ are of the form
	\begin{equation*}
	b^j_\mu =i\big(\det (\Id_d +i\Phi)\big)^{-1} \cdot \det B^j_\mu
	\end{equation*}
	where $\Phi$ denotes the Jacobi matrix of the map $(s^{\gamma^\mu}\varphi_\mu)_\mu$ with respect to the variables $s=(s_1\dotsc,s_d)$ and
	\begin{equation*}
	B^j_\mu =\begin{pmatrix}
	1+i(s^{\gamma^1}\varphi_1)_{s_1}&\dots & i(s^{\gamma^1}\varphi_1)_{s_{\mu -1}} 
	& s^{\gamma^1}(\varphi_1)_{\bar{z}_j}
	& i(s^{\gamma^1}\varphi_1)_{s_{\mu +1}} &\dots &i(s^{\gamma^1}\varphi_1)_{s_d}\\
	\vdots & &\vdots &\vdots &\vdots & &\vdots\\
	i(s^{\gamma^{\mu}}\varphi_{\mu})_{s_1}& \dots & i(s^{\gamma^{\mu}}\varphi_{\mu})_{s_{\mu -1}}
	&s^{\gamma^{\mu}}(\varphi_{\mu})_{\bar{z}_j} 
	& i(s^{\gamma^{\mu}}\varphi_{\mu})_{s_{\mu +1}} &\dots & i(s^{\gamma^{\mu}}\varphi_{\mu})_{s_d}\\
	\vdots&&\vdots&\vdots&\vdots&&\vdots\\
	i(s^{\gamma^d}\varphi_d)_{s_1}&\dots &i(s^{\gamma^d}\varphi_d)_{s_{\mu -1}}
	&s^{\gamma^d}(\varphi_d)_{\bar{z}_j}
	&i(s^{\gamma^d}\varphi_d)_{s_{\mu +1}} &\dots & 1+i(s^{\gamma^d}\varphi_d)_{s_d}
	\end{pmatrix}.
	\end{equation*}
	Hence for all $j=1,\dotsc n$ and $\mu =1,\dotsc, d$ we have
	\begin{equation}\label{CoeffForm1}
	b^j_\mu = is^{\gamma^1}\big(\det (\Id_d +i\Phi)\big)^{-1}\det C^j_\mu
	\end{equation}
	with
	\begin{equation*}
	C^j_\mu =\begin{pmatrix}
	1+i(s^{\gamma^1}\varphi_1)_{s_1}&\dots & i(s^{\gamma^1}\varphi_1)_{s_{\mu -1}} 
	& (\varphi_1)_{\bar{z}_j}
	& i(s^{\gamma^1}\varphi_1)_{s_{\mu +1}} &\dots &i(s^{\gamma^1}\varphi_1)_{s_d}\\
	\vdots & &\vdots &\vdots &\vdots & &\vdots\\
	i(s^{\gamma^{\mu}}\varphi_{\mu})_{s_1}& \dots & i(s^{\gamma^{\mu}}\varphi_{\mu})_{s_{\mu -1}}
	&s^{\tilde{\gamma}^{\mu}}(\varphi_{\mu})_{\bar{z}_j} 
	& i(s^{\gamma^{\mu}}\varphi_{\mu})_{s_{\mu +1}} &\dots & i(s^{\gamma^{\mu}}\varphi_{\mu})_{s_d}\\
	\vdots&&\vdots&\vdots&\vdots&&\vdots\\
	i(s^{\gamma^d}\varphi_d)_{s_1}&\dots &i(s^{\gamma^d}\varphi_d)_{s_{\mu -1}}
	&s^{\tilde{\gamma}^d}(\varphi_d)_{\bar{z}_j}
	&i(s^{\gamma^d}\varphi_d)_{s_{\mu +1}} &\dots & 1+i(s^{\gamma^d}\varphi_d)_{s_d}
	\end{pmatrix}
	\end{equation*}
	and $\tilde{\gamma}^\mu=\gamma^\mu-\gamma^1>0$.
	We observe that
	\begin{subequations}\label{CoeffForm2}
		\begin{align}
		\det C^j_1\big\vert_{s=0}&=(\varphi_1)_{\bar{z}_j}(z,\bar{z},0) \\ %+ R^j_1\\
		\det C^j_\mu &= 0 &\mu &=2,\dotsc,d,%R^j_\mu, &\mu &=2,\dotsc,d,
		\end{align}
	\end{subequations}
	since $\lvert\gamma^\mu\rvert \geq\lvert\gamma^1\rvert\geq 2$. %and the functions $R^j_\mu$ satisfy $R^j_\mu=O(\lvert s\rvert)$ for $s\rightarrow 0$ and $\mu =1,\dotsc, d$.
	
	Furthermore the forms
	\begin{equation*}
	\theta^\mu =ds_\mu +\sum_{j=1}^n b^j_\mu d\bar{z}_j +\sum_{j=1}^n\bar{b}^j_\mu dz_j,\quad\mu=1,\dotsc,d,
	\end{equation*}
	span the characteristic bundle near $0$ and $\theta^\mu$, $\mu=1,\dotsc,d$
	and $\omega^j=dz_j$, $j=1,\dotsc, n$, form
	a local basis of the holomorphic forms on $M$.
	From \eqref{LieDerivChar} we recall  for $\alpha\in\N_0^n$ and $\mu=1,\dotsc,d$ that
	\begin{equation*}
	\mathcal{L}^\alpha\theta^\mu=\sum_{\tau=1}^{d}T^{\alpha,\mu}_{\tau}\theta^\tau
	+\sum_{j=1}^n A^{\alpha,\mu}_j\omega^j 
	\end{equation*}
	and from \eqref{gencoeff-formula}
	\begin{align*}
	T^{0,\mu}_\tau&=\delta_{\mu\tau}\\
	T^{\alpha,\mu}_{\tau} &=L_{p_\alpha(1)}T^{\hat{\alpha}(1),\mu}_\tau
	-\sum_{\nu=1}^{d}\bigl(b^{p(1)}_\nu\bigr)_{s_\tau}T^{\hat{\alpha}(1),\mu}_\nu\\
	A^{\alpha,\mu}_{j} &=\sum_{k=1}^{\lvert\alpha\rvert}
	\sum_{\nu=1}^{d}L^{\alpha(k-1)}\Bigl(T^{\alpha -\alpha(k),\mu}_\nu\lambda^{j,p_\alpha(k)}_\nu\Bigr).
	\end{align*}
	We recall that
	\begin{align*}
	\lambda^{j,k}_\nu &=L_k\bar{b}^j_\nu -\bar{L}_jb^k_\nu\\
	&=\big(\bar{b}^j_\nu\big)_{\bar{z}_k}-\sum_{\mu=1}^d b^k_\mu \big(\bar{b}^j_\nu\big)_{s_\mu}
	-\big(b^k_\nu\big)_{z_j}+\sum_{\mu=1}\bar{b}^j_\mu \big(b^k_\nu\big)_{s_\mu}
	\end{align*}
	and note that \eqref{CoeffForm1} and \eqref{CoeffForm2} imply that
	\begin{align*}
	\lambda^{j,k}_\nu &= 2is^{\gamma^1}R^{j,k}_\nu & \nu&=1,\dotsc,d,\\
	\intertext{where} 
	R_1^{j,k}\Big\vert_{s=0}&=\big(\varphi_1\big)_{\bar{z}_kz_j}\Big\vert_{s=0}\\
	R_\nu^{j,k}\Big\vert_{s=0}&=0 & \nu&=1,\dotsc,d.
	\end{align*}
	It is easy to see that also $T^{\alpha,\mu}_\tau\big\vert_{s=0} =0$ for $\alpha\neq 0$.
	We conclude that for all $\alpha\neq 0$, and $j=1,\dotsc,n$
	\begin{align*}
	A^{\alpha,\mu}_j &=2is^{\gamma^1}\tilde{A}^{\alpha,\mu}_j & \mu &=1,\dotsc ,d\\
	\intertext{where}
	\tilde{A}^{\alpha,1}_j\Big\vert_{s=0}& =\big(\varphi_1\big)_{\bar{z}^\alpha z_j}\Big\vert_{s=0}\\
	\tilde{A}^{\alpha,\mu}_j\Big\vert_{s=0} &=0 & \mu&=2,\dotsc, d.
	\end{align*}
	
	By assumptation there are multi-indices $\alpha^1,\dotsc,\alpha^n\in\N_0^n$ of length at most $k_0$ such 
	that the vectors
	\begin{equation*}
	\bigl(\varphi_1\bigr)_{z\bar{z}^{\alpha^j}}(0),\qquad j=1,\dotsc,n,
	\end{equation*}
	form a basis of $\C^n$. 
	
	We compute the multiplier $D(\overline{\alpha},r)$ for 
	$\underline{\alpha}=(0,\dotsc,0,\alpha^1,\dotsc,\alpha^n)$
	and $r=(1,2,\dotsc,d,1,\dotsc,1)$. 
	By \eqref{equ:basisfunctions} we have
	\begin{align*}
	D(\underline{\alpha},r)&=\det \begin{pmatrix}
	1&\dots&0&0&\dots&0\\
	\vdots & &\vdots&\vdots &&\vdots\\
	0&\dots &1&0&\dots&0\\
	T^{\alpha^1,1}_1 &\dots &T^{\alpha^1,1}_d& A^{\alpha^1,1}_1&\dots &A^{\alpha^1,1}_n\\
	\vdots&&\vdots&\vdots&&\vdots\\
	T^{\alpha^n,1}_1 &\dots &T^{\alpha^n,1}_d&A^{\alpha^n,1}_1&\dots &A^{\alpha^n,1}_n
	\end{pmatrix}\\
	&=\det\begin{pmatrix}
	A^{\alpha^1,1}_1&\dots &A^{\alpha^1,1}_n\\
	\vdots&&\vdots\\
	A^{\alpha^n,1}_1&\dots &A^{\alpha^n,1}_n
	\end{pmatrix}\\
	&=\det\begin{pmatrix}
	2is^{\gamma^1}\tilde{A}^{\alpha^1,1}_1&\dots &2is^{\gamma^1}\tilde{A}_n^{\alpha^1,1}\\
	\vdots&&\vdots\\
	2is^{\gamma^1}\tilde{A}_1^{\alpha^n,1} &\dots &2is^{\gamma^1}\tilde{A}_n^{\alpha^n,1}
	\end{pmatrix}\\
	&=(2i)^n s^{n\gamma^1}\det\begin{pmatrix}
	\tilde{A}^{\alpha^1,1}_1 &\dots &\tilde{A}^{\alpha^1,1}_n\\
	\vdots&&\vdots\\
	\tilde{A}^{\alpha^n,1}_1 &\dots &\tilde{A}^{\alpha^n,1}_n
	\end{pmatrix}\\
	&=(2i)^n s^{n\gamma^1}\Lambda(\underline{\alpha},r).
	\end{align*}
	We conclude
	\begin{equation*}
	\Lambda(\underline{\alpha},r)(0)=\det
	\begin{pmatrix}
	\big(\varphi_1\big)_{z\bar{z}^{\alpha^1}}(0)\\ \vdots\\ \big(\varphi_1\big)_{z\bar{z}^{\alpha^n}}(0)
	\end{pmatrix}\neq 0.
	\end{equation*}
\end{proof}
In the preceding results we required the involved manifolds to have a special form in order to simplify the necessary calculations, but of course there are many more CR regular manifolds.
The next example gives a CR manifold that is not weakly nondegenerate at $0$ in the sense of Definition 
\ref{higherCodimDef} but is still CR regular.
\begin{Ex}
	Let $M\subseteq\C^3$ be the CR manifold given by
	\begin{align*}
	\imag w_1&= \real w_1\,\lvert z\rvert^2\\
	\imag w_2 &=\real w_2\,\lvert z\rvert^2.
	\end{align*}
	The CR bundle $\crb$ of $M$ is spanned by
	\begin{align*}
	L&=\frac{\partial}{\partial \bar{z}}-i\frac{s_1z}{1+i\lvert z\rvert^2}\frac{\partial}{\partial s_1}-i\frac{s_2 z}{1+i\lvert z\rvert^2}\frac{\partial}{\partial s_2}.\\
	\intertext{Thus a basis of the characteristic form is given by}
	\theta^1&=ds_1+ i\frac{s_1z}{1+i\lvert z\rvert^2}d\bar{z}-i\frac{s_1\bar{z}}{1-i\lvert z\rvert^2}dz\\
	\theta^2&=ds_2+ i\frac{s_2z}{1+i\lvert z\rvert^2}d\bar{z}-i\frac{s_2\bar{z}}{1-i\lvert z\rvert^2}dz.
	\end{align*}
	
	We know that $\theta^1$, $\theta^2$ and $\omega =dz$ form a basis of $T^\prime M$.
	If $\alpha=e_1$ we recall from \eqref{LieDerivChar} that
	\begin{equation*}
	\mathcal{L}^\alpha\theta^1=T^{\alpha,1}_{1}\theta^1 +T^{\alpha,1}_{2}\theta^2
	+ A^{\alpha,1}\omega.
	\end{equation*}
	Using \eqref{gencoeff-formula} we observe that
	\begin{align*}
	T^{\alpha,1}_1&=-i\frac{z}{1+i\lvert z\rvert^2}\\
	T^{\alpha,1}_2&=0\\
	A^{\alpha,1}&=-2is_1\frac{1-\lvert z\rvert^4}{\bigl(1+\lvert z\rvert^4\bigr)^2}.
	\end{align*}
	Hence, if we set $\underline{\alpha}=(0,0,\alpha)$ and $r=(1,2,1)$ 
	then the multiplier $D(\underline{\alpha},r)$ of $M$ given by \eqref{equ:basisfunctions} is
	\begin{equation*}
	D(\underline{\alpha},r)=\det \begin{pmatrix}
	1 & 0& 0\\
	0& 1& 0\\
	-i\tfrac{z}{1+i\lvert z\rvert^2}&0 &-2is_1\tfrac{1-\lvert z\rvert^4}{(1+\lvert z\rvert^4)^2}
	\end{pmatrix}
	=-2is_1\frac{1-\lvert z\rvert^4}{\bigl(1+\lvert z\rvert^4\bigr)^2}
	\end{equation*}
	and thus $M$ is CR regular.
\end{Ex}
We could now give an ultradifferentiable version of the example given in section $7$
of \cite{MR3593674} in order to show that in the previous statements 
the requirement on 
the infinitesimal automorphisms to be locally integrable is essential for the assertations to hold.
However, to do this it would be enough to replace
everywhere in section $7$ of \cite{MR3593674} the word \emph{smooth} with the term 
\emph{ultradifferentiable of class $\{\M\}$}.

Instead we take a closer look into the case of quasianalytic manifolds.
We begin with recalling the following definition from \cite[\S\ 11.7]{MR1668103}.
Let $M\subseteq\C^N$ be a CR submanifold with defining functions $\rho =(\rho_1,\dotsc,\rho_d)$ 
near $p_0\in M$.
A \emph{formal holomorphic vector field} at $p_0$ is a vector field of the form
\begin{equation*}
X=\sum_{j=1}^N a_j(Z)\frac{\partial}{\partial Z_j}
\end{equation*}
with the coefficients $a_j$ being formal power series in $Z-p_0$ with complex coefficients.
The formal vector field $X$ is said to be tangent to $M$ at $p_0$ iff there exists a $d\times d$ matrix $c(Z,\bar{Z})$ consisting
of formal power series in the variables $Z-p_0$ and $\bar{Z}-\bar{p}_0$ such that
\begin{equation*}
X\rho(Z,\bar{Z})\sim c(Z,\bar{Z})\rho(Z,\bar{Z}),
\end{equation*}
where $\sim$ denotes equality as formal power series in $Z-p_0$ and $\bar{Z}-\bar{p}_0$.
Note that the existence of nontrivial holomorphic vector fields at $p_0$ tangent to $M$ does not depend on
the choice of holomorphic coordinates and defining equations near $p_0$.
\begin{Def}
	A generic submanifold $M\subseteq\C^N$ is formally holomorphically nondegenerate at $p_0\in M$ iff
	there is no nontrivial formal holomorphic vector field at $p_0$ that is tangent to $M$.
\end{Def}
\begin{Rem}
	If $M$ is formally holomorphically nondegenerate at $p_0$ then $M$ is formally holomorphically nondegenerate 
	at every point of some neighbourhood $U$ of $p_0$.
	Furthermore if $M$ is formally holomorphically nondegenerate on an open set $U\subseteq M$ 
	then $M$ is finitely nondegenerate on an open and dense subset $V\subseteq U$, c.f.\ \cite[Theorem 11.7.5]{MR1668103}.
\end{Rem}
\begin{Thm}
	Let $\M$ be a quasianalytic regular weight sequence and $M\subseteq \C^N$ a generic submanifold of class $\{\M\}$ that is formally holomorphically nondegenerate.
	
	Every smooth CR diffeomorphism $\mathfrak{Y}$ that extends microlocally to a wedge with edge $M$ 
	is ultradifferentiable of class $\{\M\}$.
\end{Thm}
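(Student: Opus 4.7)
The plan is to reduce the ultradifferentiability of $\mathfrak{Y}$ to a pointwise division argument combining the multiplier result of Theorem~\ref{thm:multipliers} with the formal division Theorem~\ref{FormalDiv}. Fix $p_0\in M$ arbitrary, and work in local coordinates in which $\mathfrak{Y}$ is represented as a smooth vector of scalar functions $(X_1,\dotsc,X_N)$; it suffices to show each $X_j$ is of class $\{\M\}$ near $p_0$. By the remark preceding the theorem, formal holomorphic nondegeneracy propagates to an open neighbourhood $U$ of $p_0$, and $M$ is finitely nondegenerate on an open dense subset $V\subseteq U$. Picking any $q\in V$ close to $p_0$, Remark~\ref{Mult-FiniteNonDeg} furnishes a multiplier $\lambda\in\mathcal{S}(U)$ with $\lambda(q)\neq 0$.

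Because $\mathfrak{Y}$ extends microlocally to a wedge, Theorem~\ref{thm:multipliers} ensures $\lambda X_j\in\E_\M$ near $p_0$ for each $j$. Moreover $\lambda\in\E_\M(U)$ is not identically zero on $U$, so quasianalyticity of $\E_\M$ forbids $\lambda$ to vanish to infinite order at $p_0$ or on any open subset containing $p_0$; in particular the zero locus $\{\lambda=0\}$ is nowhere dense near $p_0$. For each component, the smoothness of $X_j$ gives the Taylor-series identity $\widehat{\lambda X_j}=\hat{\lambda}\cdot\hat{X}_j$ at $p_0$, whence $\widehat{\lambda X_j}\in\hat{\lambda}\cdot\C[[x]]$. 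Applying Theorem~\ref{FormalDiv} produces $\tilde{X}_j\in\E_\M$ near $p_0$ with $\lambda\tilde{X}_j=\lambda X_j$. On the open dense set $\{\lambda\neq 0\}$ one has $X_j=\tilde{X}_j$, and continuity of both functions propagates the equality to a full neighbourhood of $p_0$; hence $X_j\in\E_\M$ near $p_0$, and since $p_0$ was arbitrary the whole vector field $\mathfrak{Y}$ is ultradifferentiable of class $\{\M\}$.

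The main obstacle is the coordination of several ingredients around a single multiplier $\lambda$: it must be ultradifferentiable and nonvanishing somewhere nearby (to exploit finite nondegeneracy on $V$), must not vanish flatly at $p_0$ (so that $\widehat{\lambda X_j}/\hat{\lambda}$ is a well-defined formal power series and Theorem~\ref{FormalDiv} applies), and its zero set must have empty interior so that the continuity step succeeds. Quasianalyticity of $\M$ is the crucial hypothesis making this balancing act possible: without it, the quotient $X_j=(\lambda X_j)/\lambda$ would be controlled only on $\{\lambda\neq 0\}$ and could exhibit pathological behaviour on the flat stratum, and the formal division input itself would be unavailable.
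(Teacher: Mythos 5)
Your proof is correct and follows essentially the same route as the paper: apply Theorem~\ref{thm:multipliers} to get $\lambda X_j\in\E_\M$, use quasianalyticity together with finite nondegeneracy on a dense subset (via Remark~\ref{Mult-FiniteNonDeg}) to produce a multiplier with nontrivial Taylor series at $p_0$, and conclude by the formal division Theorem~\ref{FormalDiv}. You are in fact slightly more careful than the paper in the last step, where you explicitly identify $X_j$ with the ultradifferentiable quotient $\tilde X_j$ on the dense set $\{\lambda\neq 0\}$ and extend by continuity.
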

\begin{proof}
	As usual we argue locally near a point $p_0$. 
	After a choice of local bases of CR vector fields and holomorphic forms 
	and selecting a generating set for the characteristic forms we can use the representation \eqref{2LokalRep}
	near $p_0$. 
	By Theorem \ref{thm:multipliers} we know that for any multiplier $\lambda$ the product 
	$\Lambda_j =\lambda\cdot X_j$ is ultradifferentiable for $j=1,\dotsc, N$.
	Since $X_j$ is smooth by assumption we have that the equality holds also for the formal power series at $p_0$
	of $\Lambda_j$, $\lambda$ and $X_j$.
	Since $M$ is formally holomorphically nondegenerate at $p_0$ there has to be a multiplier $\lambda\in\mathcal{S}$
	with nontrivial formal power series at $p_0$. Indeed, if the power series of $\lambda$ at $p_0$ equals $0$ then
	$\lambda$ itself has to vanish in a neighbourhood of $p_0$ by the quasianalyticity of $\M$.
	On the other hand in every neighbourhood of $p_0$ there is a point $q$ at which $M$ is finitely nondegenerate by
	\cite[Theorem 11.7.5]{MR1668103}. 
	Hence by Remark \ref{Mult-FiniteNonDeg} there has to be a nontrivial multiplier $\lambda^\prime$ defined on some
	neighbourhood $U$ of $p_0$. 
	
	We conclude that the formal power series of $\Lambda_j^\prime=\lambda^\prime X_j$ at $p_0$ is divisible by
	the Taylor series of  $\lambda^\prime$ 
	at $p_0$.
	Hence Theorem \ref{FormalDiv} gives that $X_j$ is ultradifferentiable of class $\{\M\}$ near $p_0$.
\end{proof}

\bibliographystyle{plain}
\bibliography{UltraRef1}
\end{document}